    \newcommand{\BC}{{\mathbb {C}}} 
    \newcommand{\BE}{{\mathbb {E}}} 
     \newcommand{\BH}{{\mathbb {H}}}
    \newcommand{\BQ}{{\mathbb {Q}}} \newcommand{\BR}{{\mathbb {R}}}
     \newcommand{\BZ}{{\mathbb {Z}}}
    \newcommand{\CM}{{\mathcal {M}}} 
    \newcommand{\CO}{{\mathcal {O}}}
     \newcommand{\GL}{{\mathrm{GL}}}
    \renewcommand{\Im}{{\mathrm{Im}}}
    \newcommand{\Isom}{{\mathrm{Isom}}}
    \newcommand{\Ker}{{\mathrm{Ker}}}
    \newcommand{\lcm}{{\mathrm{lcm}}}
    \newcommand{\PSL}{{\mathrm{PSL}}}
    \renewcommand{\mod}{\ \mathrm{mod}\ }\renewcommand{\Re}{{\mathrm{Re}}}
    \newcommand{\sign}{{\mathrm{sign}}}
    \newcommand{\SL}{{\mathrm{SL}}}
    \newcommand{\tr}{{\mathrm{tr}}}
\newcommand{\matrixx}[4]{\begin{pmatrix}
#1 & #2 \\ #3 & #4
\end{pmatrix} }        
    \newcommand{\wt}{\widetilde}
    \newcommand{\ov}{\overline}
    \newcommand{\sk}{\medskip}
    \newcommand{\lra}{\longrightarrow}
    \newcommand{\ra}{\rightarrow}
    \newcommand{\s}{\sk\noindent}
    \theoremstyle{plain}
    \newtheorem{thm}{Theorem}[section] \newtheorem{cor}[thm]{Corollary}
    \newtheorem{lem}[thm]{Lemma}  \newtheorem{prop}[thm]{Proposition}
    \newtheorem {conj}[thm]{Conjecture} 
    \newtheorem{lem-defn}[thm]{Lemmma-Definition}
\theoremstyle{remark} 
\theoremstyle{remark} 
\theoremstyle{remark} 
    \numberwithin{equation}{section}
        \newcommand{\Sol}{\mathrm{Sol}}
         \newcommand{\Nil}{\mathrm{Nil}}
\title{Achirality of Sol 3-Manifolds,  Stevenhagen Conjecture \\ and Shimizu's L-series}
\author{Ye Tian}
\address{Academy of Mathematics and System Science,  Morningside Center of Mathematics, 
Chinese Academy of Sciences,
Beijing 100190}
\email{ytian@math.ac.cn}
\author{Shicheng Wang}
\address{Department of Mathematical Sciences, Peking University, Beijing 100871, CHINA}
\email{wangsc@math.pku.edu.cn}
\author{Zhongzi Wang}
\address{Department of Mathematical Sciences, Peking University, Beijing 100871 CHINA}
\email{wangzz22@stu.pku.edu.cn}
\date{\today}
\keywords{Class groups, Pell equations,  Achirality of  Sol 3-Manifolds}
\begin{document}



\begin{abstract} 
A closed orientable manifold  is {\em achiral} if it admits an orientation reversing homeomorphism.
A  commensurable class of closed manifolds is achiral if it contains an achiral element,
or equivalently, each manifold in $\CM$ has an achiral finite cover. 
  Each commensurable class containing  non-orientable elements must be achiral.  
   It is natural to wonder  how many 
 commensurable classes are achiral and how many achiral classes have non-orientable elements.

We study this problem for Sol 3-manifolds. 
Each commensurable class $\CM$ of Sol 3-manifold has a complete topological invariant $D_{\CM}$, the discriminant of $\CM$. 
Our main result  is:


(1) Among all commensurable classes of Sol 3-manifolds, 
there are infinitely many achiral classes; however ordered by discriminants, the  density of achiral commensurable classes is 0.

 
(2) Among all achiral commensurable classes of Sol 3-manifolds, ordered by discriminants, the density of classes containing non-orientable elements is $1-\rho$,
    where
$$\rho:=\prod_{j=1}^\infty  \left(1+2^{-j}\right)^{-1} = 0.41942\cdots.$$

\end{abstract}

\maketitle

\tableofcontents

 \section{Introduction}
In this paper, we assume that all manifolds $M$ are closed. A closed orientable manifold is called {\em achiral},  if it admits an orientation reversing homeomorphism, and is called virtually achiral if it has an achiral finite cover.
We call two manifolds  $M_1$ and $M_2$ commensurable, if they  have  a common finite cover. A commensurable class of closed manifolds is called achiral if it contains an achiral element. A commensurable class  $\CM$ is achiral is equivalent to that each manifold in $\CM$ is virtually achiral (Lemma \ref{virtual}). Since each non-orientable manifold has an orientable double cover which is achiral,   each commensurable class containing  non-orientable elements must be achiral. 

The achirality is fundamental in topology for the classification of oriented manifolds, specially in dimension 3, where the concept achirality is originated. The study of various virtual properties, say  virtually Haken, virtually positive volume, virtual dominations, are important and active topics in 3-manifolds,  see a survey \cite{LS}. 



It will be natural to wonder the following questions: In cartain range,

{\it 1. What is the density  of achiral manifolds  among all?  }

{\it 2. What is the density of  manifolds have an  achiral finite cover among all? or what is the density of achiral classes among all commensurable classes?}

{\it 3.  What is the density of classes containing non-orientable element among all achiral classes?}

Question 1 is for manifolds and Question 2 and Question 3 are for commensurable classes.   
Note to study those questions, one need first to order the manifolds or classes in the range considered, which is subtle, and often
difficult.

In dimension 1 and 2, the answers to  those questions are direct: There is only one closed 1-manifold, the circle, which is achiral.
All orientable 2-manifolds are achiral. 
Denote $\BH^n$, $\BE^n$, $S^n$ the $n$-dimensional hyperbolic, Euclidean, and spherical geometries, respectively.  In dimension two case, every closed surface supports one of the three geometries:
$S^2, \BE^2, \BH^2.$
Each geometry has only one commensurable class which  also contains non-orientable elements.

Once we come to dimension 3, those questions become highly non-trivial. 
Recall that Thurston's eight geometries among 3-manifolds are \cite{Th} \cite{Sc},
 $$  S^3,\quad  \BE^3,\quad   S^2\times \BE^1, \quad \BH^2\times \BE^1, \quad  \Nil, \quad \wt{\PSL(2,\mathbb{R})},\quad  \BH^3,\quad  \Sol.$$

It is neither  meaningful  nor possible for us to order all those manifolds from those eight geometries, and to get answers to  those questions.  
 Let's see what happen when we restrict to each  individual geometry. 
 
 We first have a look of Question 1. Four cases are essentially known:  Each manifolds supports either $\Nil-$ or 
$ \wt{\PSL(2,\mathbb{R})}-$ geometry is non-achiral, so the density is 0.
There are only two orientable manifolds  supporting  $S^2\times E^1$-geometry, both are achiral, so the density is 1.
There are only seven orientable manifolds supporting the $ E^3$-geometry, four of them are achiral and three are not (\cite[Theorem 1.7]{SWW} and Lemma \ref{-1}), so the density is $4/7$.
It is not clear for us how to  order the manifolds for the remaining geometries to get the density. 

Now we discuss Questions 2 and 3, which is the issue of the paper. 
 The situation is known for the first six classes:  3-manifolds supporting one of the first six  geometries form only one commensurable class (\cite{Sc}), and
commensurable classes are achiral for the manifolds supporting first four geometries, and non-achiral for manifolds supporting 
next two geometries.
Moreover there is no non-orientable 3-manifolds supporting $S^3$-geometric, and there are  non-orientable 3-manifolds supporting the next three geometries. 
On the other hand  3-manifolds supporting one of the last two  geometries, Sol or  $\BH^3$, form infinitely many commensurable classes, and 
the achirality of those commensurable classes  are  not addressed.

We study the achirality of  commensurable classes of Sol 3-manifolds in this paper.

Each commensurable class $\CM$ of Sol 3-manifold contains an orientable torus bundle $M=M_\phi$, eigenvalues of whose monodromy map $\phi$ generates over $\BQ$ a real quadratic field. The   discriminant of the real quadratic field is the complete topological invariant of $\CM$, denoted by $D_{\CM}$ and called the discriminant of $\CM$ 

Our main result  is:

\begin{thm} \label{density} (1) Among all commensurable classes of Sol 3-manifolds, 
there are infinitely many achiral classes; however ordered by discriminants, the  density of achiral commensurable classes:

$$\lim_{X\ra \infty} \frac{\#\{\text{achiral}\ \CM\ \ |D_{\CM}<X\}}{\#\{\CM \ |\ D_{\CM}<X \ {}\}}=0,$$

(2) Among all achiral commensurable classes of Sol 3-manifolds ordered by discriminants, the density of classes containing non-orientable elements:
   $$\lim_{X\ra \infty} \frac {\#\{ \CM \ {}\text{contains an non-orientable element \,}|\, D_\CM <X\ \}}{\#\{\text{achiral}\ \CM\ \ |D_{\CM}<X\}}=1-\rho,$$
    where
$$\rho:=\prod_{j=1}^\infty  \left(1+2^{-j}\right)^{-1} = 0.41942\cdots.$$
\end{thm}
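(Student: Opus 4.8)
The plan is to convert the two topological density statements into purely arithmetic assertions about the fundamental discriminant $D = D_\CM$ of the real quadratic field $\BQ(\sqrt{D})$ attached to $\CM$, and then to evaluate the resulting densities of fundamental discriminants by a Landau-type prime count for part (1) and by the Stevenhagen density for part (2).

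\emph{Step 1: the arithmetic dictionary.} Building on the topological classification of the preceding sections and on \cite{SWW}, I would first establish two equivalences phrased entirely in terms of the complete invariant $D$. Namely: (i) $\CM$ is achiral if and only if $D$ is \emph{eligible}, i.e. every odd prime dividing $D$ is $\equiv 1 \pmod 4$ (equivalently, $-1$ is a local norm everywhere from $\BQ(\sqrt{D})$, i.e. the negative Pell equation $x^2 - Dy^2 = -1$ is \emph{locally} solvable); and (ii) $\CM$ contains a non-orientable element if and only if the fundamental unit $\epsilon_D$ has norm $-1$, equivalently $x^2 - Dy^2 = -1$ is \emph{globally} solvable, equivalently the narrow and ordinary class groups of $\BQ(\sqrt{D})$ coincide. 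Topologically this local/global dichotomy is natural: achirality is a commensurability (virtual) notion, matching local solvability, whereas an honest non-orientable member demands a genuine orientation-reversing involution of the fiber torus, i.e. a unit of norm $-1$, matching global solvability. Since $\Nm(\epsilon_D) = -1$ forces every odd prime factor of $D$ to be $\equiv 1 \pmod 4$, (ii) implies (i), which recovers the elementary inclusion stated in the introduction that every class with a non-orientable element is achiral. After this reduction both parts become counting problems for fundamental discriminants in $[1,X]$.

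\emph{Step 2: part (1).} Here the numerator counts eligible fundamental discriminants $D<X$ and the denominator counts all real-quadratic fundamental discriminants $D<X$, the latter being $\sim \tfrac{3}{\pi^2}X$, of positive density. For the numerator I would invoke the Landau--Selberg--Delange method: the generating Dirichlet series for integers all of whose prime factors lie in a set of primes of density $\tfrac12$ has a singularity of type $(s-1)^{-1/2}$ at $s=1$, so the count of eligible discriminants up to $X$ is $\sim c\, X/\sqrt{\log X}$. Dividing, the ratio is $O\!\left(1/\sqrt{\log X}\right)\to 0$, giving the vanishing density in (1); the infinitude of achiral classes is immediate, since already $D=p$ with $p\equiv 1\pmod 4$ gives infinitely many eligible discriminants.

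\emph{Step 3: part (2), and the main obstacle.} Now both numerator and denominator range over eligible $D<X$, with the numerator restricted to $\Nm(\epsilon_D)=-1$; by Step 1 the ratio is exactly the proportion of negative-Pell-solvable discriminants among eligible ones, whose limit is the assertion of Stevenhagen's conjecture, equal to $1-\rho$ with $\rho=\prod_{j\ge1}(1+2^{-j})^{-1}$. The mechanism is that, for eligible $D$, solvability of $x^2-Dy^2=-1$ is obstructed precisely by a distinguished part of the $2^\infty$-torsion of the narrow class group $\Cl^+(D)$: it is governed at first order by the R\'edei matrix over $\BF_2$ (the $4$-rank), and fully only by the joint distribution of all the higher $2$-power ranks, the value $1-\rho$ being the limiting probability that this obstruction vanishes. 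Establishing the \emph{exact} constant, rather than merely bounding it, is the crux and the deepest input: it requires the full Fouvry--Kl\"uners-type analysis of the $4$-rank together with control of the governing fields / R\'edei symbols that pin down the $8$-rank and beyond. I would route this through Shimizu's $L$-series, whose special values package the genus- and class-number data uniformly in $D$ and thereby convert the $2$-power rank statistics into the product $\rho$; the attendant error-term estimates, needed to pass from averages of $L$-values to a genuine density, are the step most likely to demand delicate analytic work.
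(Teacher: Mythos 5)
Your Step 1 dictionary and your Step 2 are essentially the paper's own argument. The paper proves exactly your item (i) as Theorem \ref{Gauss2} ($\CM$ is achiral iff $D_\CM$ has no prime factor $\equiv 3 \pmod 4$; the extra condition $16\nmid D$ from Theorem \ref{Gauss} is automatic for fundamental discriminants), and your item (ii) as Corollary \ref{Pell2}, stated there via the equation $x^2-D y^2=-4$, i.e. $\Nm(\epsilon_D)=-1$; you should phrase it with $-4$ rather than $-1$, although for fundamental discriminants the two solvability conditions coincide. For part (1) the paper takes a slightly more elementary route than Landau--Selberg--Delange: it notes that eligible discriminants are sums of two squares, quotes the classical density-zero count for sums of two squares, and divides by the positive-density count $\sim \tfrac{3}{\pi^2}X$ of fundamental discriminants; your $cX/\sqrt{\log X}$ asymptotic is a sharper form of the same input and works equally well. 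The infinitude of achiral classes via primes $p\equiv 1\pmod 4$ is also the paper's argument.

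The one genuine problem is the end of your Step 3. Having correctly reduced part (2) to the statement of Stevenhagen's conjecture for fundamental discriminants without prime factors $\equiv 3\pmod 4$, you then propose to establish the constant $1-\rho$ yourself, by combining Fouvry--Kl\"uners-type $4$-rank statistics with governing-field control of the $8$-rank and by ``routing the computation through Shimizu's $L$-series.'' That plan would fail: in this paper Shimizu's $L$-series appears only in Theorem \ref{Shimizu}, as a detector of achirality of a single torus bundle (its identical vanishing is equivalent to $[-Q_\phi]=\pm[Q_\phi]$), and its special values carry no information about the joint distribution of the $2^k$-ranks of narrow class groups as $D$ varies; moreover the $4$- and $8$-rank data alone yield only upper and lower bounds for the density, not the exact value $1-\rho$. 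Obtaining the exact constant requires simultaneous control of all $2$-power ranks, which is Smith's method as carried out by Koymans and Pagano \cite{KP}. The paper simply cites that theorem as a black box, and so should you; with that substitution in place of your proposed analytic detour, your argument closes.
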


The project is started by wondering if each Sol 3-manifold has an achiral finite cover. 
Theorem \ref{density} (1) is unexpected to us which claims that to have achiral finite covers are rare among Sol 3-manifolds.
Theorem \ref{density} (2)  is a big surprise  that the Stevenhagen's conjecture, which was solved  by P. Koymans and C. Pagano \cite{KP} in 2022, has its avatar in topology.
  
For each orientable Sol torus bundle $M=M_\phi$, we introduce the discriminant $D_M$ of $M$ to be the discriminant of the fixed points of its monodromy $\phi$.  The integer $D_M$ is a topological invariant of $M$  so that $\BQ(\sqrt{D_M})$ is the real quadratic field of the 
commensurable class containing $M$.  
Another result, Theorem \ref{Gauss}, in this paper is that for any given $D\equiv 0, 1 \mod 4$ non-square positive integer, the following conditions are equivalent
 \begin{itemize}
 	\item there exists an achiral Sol torus bundle $M$ with discriminant $D$;
 	\item no prime factor of $D$ is $\equiv 3\mod 4$ and $16\nmid D$.  
 \end{itemize}
Consequences of Theorem \ref{Gauss} including:  among Sol 3-manifolds,  a commensurable class $\CM$ is achiral if and only if $D_{\CM}$ contains no prime $\equiv 3\mod 4$ (Theorem \ref{Gauss2}); each achiral commensurable class contains non-achiral manifolds (Corollary \ref{Gauss3});  
and $\CM$ contains an non-orientable element if and only if it contains an achiral torus semi-bundle (Corollary  \ref{Pell2}). 
 
In his study on cusp cross-sections of Hilbert modular varieties, Hirzebruch made a conjecture 
relates the signature defects of an orientable Sol torus bundle to the special value of its corresponding Shimizu's L-function. 
In this paper, we also show that, Theorem \ref{Shimizu} (1), for an oriented Sol torus bundle $M$,  $M$ is achiral if and only if  the Shimizu's L-series of $M$ is identically zero.
If monodormies of  two oriented Sol torus bundles  are negative to each other, they are not homeomorphic but their Shimuz L-series are the same.  We have that, Theorem \ref{Shimizu} (2), non-achiral oriented Sol tours bundles  are determined by their Shimizu L-functions up to such a relation. 

Theorem \ref{density} follows from the positive answer of Stevenhagen Conjecture, Theorem \ref{Gauss2}, and Corollary \ref{Pell2}
which claims that a commensurable class contains a non-orientable element if and only if the corresponding negative Pell equation
has a solution. Both the proofs of Theorem \ref{Gauss} and  \ref{Shimizu} are based on Theorem \ref{main1} which claims that 
a Sol 3-manifold $M_\phi$ is achiral if and only if $[-Q_\phi]=\pm [Q_\phi]$ in class group
$C(D_M)$
for its corresponding quadratic form $Q_\phi$.   The proof of Theorem \ref{Gauss} also highly relies  on Gauss' genus theory.


\section{Commensurable Classes of Sol 3-manifolds and Their Discriminants}

\s{\bf Classification of Sol 3-manifolds}.  Recall the Sol geometry is the Lie group $\BR^2\rtimes \BR^1$ with structure given by the representation $\BR^1\ra \GL_2(\BR), z\mapsto \matrixx{e^z}{}{}{e^{-z}}$, together the invariant Riemann metric $e^{2z} dx^2+e^{-2z} dy^2+dz^2$. A connected closed manifold is called Sol 3-manifold if it is modelled on $\Sol$, i.e.  is of form $\Sol/\Gamma$ for some discrete subgroup $\Gamma$ of $\Isom (\Sol)$.  There are two types Sol 3-manifolds:
\begin{itemize}
	\item  let $T=\BR/2\pi \BZ\times\BR/2\pi \BZ$ be a torus, $I=[0, 1]$, and $\phi\in\GL_2(\BZ)$ Anosov matrix, i.e. its eigenvalues are real but not $\{\pm 1\}$. Define
		\[M_\phi=T\times I/((x,y)\phi, 0)\sim (x,y,1).\]
		Thus $M_\phi$ is a tours bundle over a circle.
\item let  
		\[N=T\times I/(x,y,z)\sim(x+\pi,-y, 1-z)\]
		 a twisted $I$-bundle of the Klein bottle $K=T/(x, y)\sim (x+\pi, -y)$.  For each $\psi=\matrixx{a}{b}{c}{d}\in\SL_2(\BZ)$ with $abcd\neq 0$, define the semi-torus bundle by gluing two $N$'s along their  boundary $T\times \{1\}$ via $\psi$:
		\[N_\psi=N\cup N\big/ (x,y,1)\sim ((x,y)\psi, 1),\]
\end{itemize}

The following classification result of Sol 3-manifolds is basically known except the non-orientable case needs some explanations.


\begin{prop}\label{Class1}
 Any Sol 3-manifold is homeomorphic to one of the following  
\begin{enumerate}
		\item Torus bundles $M_\phi$:   two $M_\phi$ and  $M_{\phi'}$ are homeomorphic if and only if $\phi'$ is $\GL_2(\BZ)$-conjugate to $\phi^{\pm 1}$. Moreover, $M_\phi$ is orientable if and only if $\phi\in\SL_2(\BZ)$.
		
\item Torus semi-bundles $N_\psi$: two $N_\psi$ and $N_{\psi'}$are homeomorphic if and only if  $$\psi'=\matrixx{\pm 1}{0}{0}{\pm 1}\psi^{\pm 1}\matrixx{\pm 1}{0}{0}{\pm 1}.$$Moreover,  $N_\psi$ is always orientable. 
\end{enumerate}
\end{prop}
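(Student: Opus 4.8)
The plan is to separate the \emph{completeness} of the list (every Sol $3$-manifold appears) from the two \emph{classification} statements, and to reduce all homeomorphism questions to the fundamental group. For completeness I would write a Sol $3$-manifold as $\Sol/\Gamma$ with $\Gamma$ a torsion-free lattice in $\Isom(\Sol)$, whose identity component is $\BR^2\rtimes\BR$. The intersection $L=\Gamma\cap\BR^2$ is a normal rank-two lattice $\cong\BZ^2$, and the contracting/expanding action of the $\BR$-factor forces the action of any $\gamma\in\Gamma$ on $L$ to be hyperbolic, i.e. Anosov. Projecting to the base line and using $\Isom(\BR)=\BR\rtimes\{\pm1\}$, the image of $\Gamma$ is a discrete cocompact subgroup, hence $\BZ$ or the infinite dihedral group $D_\infty$. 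In the first case $M$ is the mapping torus $M_\phi$ of the monodromy $\phi\in\GL_2(\BZ)$; in the second the two reflections cut $M$ into two twisted $I$-bundles over Klein bottles, exhibiting $M=N_\psi$ with $abcd\neq0$ (the nondegeneracy being equivalent to the eigen-directions being irrational, i.e. to Sol rather than Seifert geometry). I would cite Scott for the known portions of this dichotomy and spell out only the non-orientable bundles.

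Every Sol $3$-manifold is aspherical, irreducible, and Haken (it contains the incompressible fibre torus, resp. the central gluing torus), so by Waldhausen's theorem together with the rigidity of infra-solvmanifolds, two of them are homeomorphic iff their fundamental groups are isomorphic. This reduces (1) and (2) to algebra plus one explicit geometric construction for each ``if'' direction. For (1), $\pi_1(M_\phi)=\BZ^2\rtimes_\phi\BZ$, and because $\phi$ is Anosov the subgroup $\BZ^2$ is the unique maximal normal abelian subgroup, hence characteristic. An isomorphism $\pi_1(M_\phi)\xrightarrow{\sim}\pi_1(M_{\phi'})$ therefore restricts to some $A\in\GL_2(\BZ)$ on $\BZ^2$ and sends a generator $t$ to $t'^{\epsilon}w$ with $\epsilon=\pm1$ and $w\in\BZ^2$; comparing conjugation actions gives $A\phi A^{-1}=\phi'^{\epsilon}$, i.e. $\phi'=A\phi^{\pm1}A^{-1}$. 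Conversely $A$ realizes a self-homeomorphism of $T$ conjugating the gluings, and reversing the $I$-coordinate realizes $\phi\mapsto\phi^{-1}$. Orientability is the usual bundle computation: $M_\phi\to S^1$ is orientable iff its monodromy preserves the orientation of the fibre $T$, i.e. iff $\det\phi=1$, i.e. iff $\phi\in\SL_2(\BZ)$.

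For (2) I would identify $\pi_1(N)=\pi_1(K)=\langle a,b\mid aba^{-1}=b^{-1}\rangle$ and its boundary subgroup $\pi_1(\partial N)=\langle b,a^2\rangle\cong\BZ^2$. Since the mapping class group of $K$ is $(\BZ/2)^2$, a short computation shows that the subgroup $E\le\GL_2(\BZ)$ of boundary mapping classes extending over $N$ is exactly $\{\diag(\pm1,\pm1)\}$ in the basis $(b,a^2)$ (the two distinguished directions cannot be interchanged by a homeomorphism of $N$). The ``if'' direction then follows by pre- and post-composing the gluing with elements of $E$ (producing the two $\diag(\pm1,\pm1)$ factors) and by interchanging the two copies of $N$ (producing $\psi\mapsto\psi^{-1}$). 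For the ``only if'' direction, the hypothesis $abcd\neq0$ guarantees that $N_\psi$ is not Seifert fibered, so the central torus is, up to isotopy, the unique two-sided incompressible torus; any homeomorphism $N_\psi\to N_{\psi'}$ can thus be isotoped to preserve it (possibly swapping the two sides) and hence restricts to the stated relation. Finally $N$ is orientable because its defining free involution $(x,y,z)\mapsto(x+\pi,-y,1-z)$ of $T\times I$ has derivative $\diag(1,-1,-1)$, of determinant $1$, hence preserves orientation; and since $\partial N$ is a single torus, the two copies can always be oriented compatibly across any gluing, so $N_\psi$ is always orientable.

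The main obstacle I expect is the ``only if'' direction of (2): pinning down $E$ in the correct basis and proving uniqueness up to isotopy of the central torus, so that a homeomorphism must respect the decomposition (at worst swapping sides). This is exactly where Waldhausen's incompressible-surface theory and the non-Seifert consequence of $abcd\neq0$ are needed, and where careful bookkeeping of bases is unavoidable. A secondary point requiring care is verifying that the rigidity reduction ``homeomorphic $\iff$ $\pi_1$-isomorphic'' remains valid in the non-orientable case $\det\phi=-1$, which is precisely the part the proposition flags as needing explanation.
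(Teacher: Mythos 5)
Your proposal is correct in substance, but it follows a genuinely different route from the paper. The paper simply quotes Hatcher's Theorems 2.6 and 2.8 for the classification of \emph{orientable} Sol manifolds, and then disposes of the non-orientable case by passing to the orientable double cover: if that cover is a torus bundle, Sakuma's classification of free involutions on torus bundles (Case (II), p.~168) shows the quotient is again a torus bundle; if it is a torus semi-bundle, the theorem of Bareto--Goncalves--Vedruscolo says every free quotient of a torus semi-bundle is orientable, so this case produces nothing non-orientable. You instead derive the dichotomy directly from the lattice structure of $\Gamma\subset\Isom(\Sol)$ and reduce both classification statements to $\pi_1$ via Waldhausen rigidity, handling (1) uniformly for $\det\phi=\pm1$ through the characteristic maximal normal abelian subgroup $\BZ^2$, and (2) through uniqueness of the incompressible splitting torus and the extension subgroup $E=\{\diag(\pm1,\pm1)\}$. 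Your approach buys self-containedness and avoids Sakuma and BGV entirely, at the cost of needing the non-orientable extension of Waldhausen's theorem (which you correctly flag) and a structure theorem for lattices in $\Isom(\Sol)$. One point you pass over quickly in the completeness step: in the $D_\infty$ case you must rule out pieces that are non-orientable $I$-bundles (e.g.\ twisted $I$-bundles over $T$), which a priori could yield non-orientable semi-bundle-type manifolds. This follows from a short computation with the point stabilizer of $\Isom(\Sol)$: a reflection-type element has the form $(x,y,z)\mapsto(\epsilon_1 y+a,\epsilon_2 x+b,-z+c)$, and requiring its square to be a translation in the lattice forces $\epsilon_1\epsilon_2=1$, so the induced involution of $\Sol$ preserves orientation and each piece is the orientable twisted $I$-bundle over the Klein bottle. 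With that remark supplied, your argument is complete and matches the statement, including the assertion that $N_\psi$ is always orientable.
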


\begin{proof} Orientable Sol 3-manifolds are classified by (\cite{Ha}, Theorems 2.6 and 2.8, see also \cite{SWW} Theorems 2.3 and 2.4). Now let $M$ be a non-orientable Sol 3-manifold. Then the orientable double cover of $M$ must be either a torus bundle or torus semi-bundle. In the first case, $M$ must also be a torus bundle by the classification of free involutions on torus bundles (see \cite{Sak}). In fact, it is exactly the Case (II) in \cite{Sak} p 168. In the second case, $M$ is given by a free involution on torus semi-bundle which must be orientable by [Theorem 2] in \cite{BGV}.
\end{proof}

Fix orientations on $T$ and $I$, each orientable torus bundle $M_\phi, \phi\in \SL_2(\BZ)$, inherits an orientation, and also denote by $M_\phi$ the oriented torus bundle. 
\begin{lem}\label{inverse} For each oriented $M_\phi$, $M_{\phi^{-1}}=-M_\phi.$  where  $-M$ is the $M$ with opposite orientation.
\end{lem}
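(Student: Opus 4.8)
The plan is to produce an explicit orientation-reversing homeomorphism $M_\phi \to M_{\phi^{-1}}$ by flipping the interval coordinate. Concretely, I would begin with the self-map of the product
$$F\colon T\times I \lra T\times I,\qquad F(x,y,t)=(x,y,1-t),$$
which preserves the torus factor and reverses $I$. Since $T$ carries the fixed orientation and reversing the orientation of a single factor reverses the product orientation (regardless of the dimensions of the factors), the map $F$ is orientation-reversing on $T\times I$.

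First I would check that $F$ descends to the quotients with the monodromy inverted. The defining relation of $M_\phi$ identifies $(x,y,1)$ with $((x,y)\phi,0)$. Applying $F$ sends this pair to $(x,y,0)$ and $((x,y)\phi,1)$, so I must verify that $(x,y,0)\sim((x,y)\phi,1)$ holds in $M_{\phi^{-1}}$. The gluing relation of $M_{\phi^{-1}}$ identifies $(w,1)$ with $(w\phi^{-1},0)$; taking $w=(x,y)\phi$ yields $((x,y)\phi,1)\sim(x,y,0)$, which is precisely the required relation. Hence $F$ induces a well-defined continuous map $\bar F\colon M_\phi\to M_{\phi^{-1}}$, and the analogous construction with $\phi$ and $\phi^{-1}$ interchanged gives a two-sided continuous inverse, so $\bar F$ is a homeomorphism.

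Finally I would transfer the orientation statement through the quotient. Because $\phi,\phi^{-1}\in\SL_2(\BZ)$ act orientation-preservingly on $T$, the product orientation of $T\times I$ descends consistently to the inherited orientations of $M_\phi$ and of $M_{\phi^{-1}}$, and $\bar F$ inherits the orientation-reversing property of $F$. An orientation-reversing homeomorphism $M_\phi\to M_{\phi^{-1}}$ is by definition an orientation-preserving homeomorphism $M_\phi\to -M_{\phi^{-1}}$, which gives $M_{\phi^{-1}}=-M_\phi$ as oriented manifolds, as claimed. The only genuine obstacle is the bookkeeping: one must confirm that flipping $I$ converts the monodromy $\phi$ into $\phi^{-1}$ rather than $\phi$ itself, and that the sign of the product orientation is tracked correctly across the identifications. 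Both points are dispatched by the substitution $w=(x,y)\phi$ above and by the fact that reversing one factor of a product reverses its orientation independent of the parity of the dimensions.
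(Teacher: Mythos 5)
Your proposal is correct and follows essentially the same route as the paper: the authors also observe that gluing top to bottom via $\phi^{-1}$ is the same as gluing bottom to top via $\phi$, and realize the homeomorphism $M_{\phi^{-1}}\cong M_\phi$ by reversing the orientation of $[0,1]$. Your version merely makes explicit the descent check via the substitution $w=(x,y)\phi$ and the orientation bookkeeping, both of which the paper leaves implicit.
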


\begin{proof}
By definition we have  $$M_{\phi ^{-1}}=\frac {T\times [0 ,1]} {(x, 1)\sim (\phi^{-1}(x), 0)}.$$

Note identify the top to the bottom via $\phi^{-1}$ is the same as identify the bottom to top via $\phi$, and later is homeomorphic
to $M_\phi$ via an orientation reversing homeomorphism which is obtained by changing the orientation of $[0, 1]$.
We proved the lemma.
\end{proof}

 \begin{lem}\label{oriented}  Two oriented tours bundles $M_\phi$ and $M_{\phi'}$ are homeomorphic if and only if $\phi'$ is  $\SL_2(\BZ)$-conjugate to $\phi$ or $w\phi^{-1} w$, where $w=\matrixx{}{1}{1}{}$. 
 \end{lem}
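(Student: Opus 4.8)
The plan is to refine the unoriented classification of Proposition~\ref{Class1}(1) by bookkeeping orientations, using Lemma~\ref{inverse} to absorb the orientation-reversing case into the twist by $w$. Throughout I write $\cong_+$ for ``orientation-preservingly homeomorphic'', which is what ``homeomorphic'' means for the oriented bundles; recall that the orientation on $M_\phi$ is the product of the fixed orientation on the fiber $T$ and on the base direction $I$. The two conditions to be matched are $\phi'\sim_{\SL_2(\BZ)}\phi$ (no orientation flip) and $\phi'\sim_{\SL_2(\BZ)} w\phi^{-1}w$ (a simultaneous flip of fiber and base orientation), both of which refine the unoriented condition $\phi'\sim_{\GL_2(\BZ)}\phi^{\pm1}$ since $w\phi^{-1}w\sim_{\GL_2(\BZ)}\phi^{-1}$.

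For the ``if'' direction I would write down the homeomorphisms explicitly. If $\phi'=g\phi g^{-1}$ with $g\in\SL_2(\BZ)$, the self-map of $T$ induced by $g$ preserves the fiber orientation (as $\det g=1$), and together with the identity on the base direction it glues to an orientation-preserving homeomorphism $M_\phi\cong_+ M_{\phi'}$. For the second case I would first treat $w\phi^{-1}w$: the map of fibers induced by $w$, with the base direction fixed, yields a homeomorphism $M_{\phi^{-1}}\to M_{w\phi^{-1}w}$; since $\det w=-1$ it reverses the fiber orientation while fixing the base, hence reverses the total orientation, so $M_{w\phi^{-1}w}\cong_+ -M_{\phi^{-1}}$. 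By Lemma~\ref{inverse}, $-M_{\phi^{-1}}=-(-M_\phi)=M_\phi$, so $M_{w\phi^{-1}w}\cong_+ M_\phi$; composing with the $\SL_2(\BZ)$-conjugacy $\phi'\sim w\phi^{-1}w$ gives $M_{\phi'}\cong_+ M_\phi$.

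For the ``only if'' direction, suppose $f\colon M_\phi\to M_{\phi'}$ is orientation-preserving. The key topological input is that the torus fibration of a Sol torus bundle is unique up to isotopy (its fibers are, up to isotopy, the only incompressible tori), so $f$ may be isotoped to a fiber-preserving map covering a homeomorphism $\bar f\colon S^1\to S^1$ and inducing some $g\in\GL_2(\BZ)$ on $H_1(T)$. Standard mapping-torus bookkeeping gives $\phi'=g\phi g^{-1}$ when $\bar f$ preserves the base orientation and $\phi'=g\phi^{-1}g^{-1}$ when $\bar f$ reverses it, while the total degree factors as $\deg f=\det(g)\cdot\deg(\bar f)=+1$. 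In the first case $\deg(\bar f)=+1$ forces $\det g=+1$, i.e. $g\in\SL_2(\BZ)$ and $\phi'\sim_{\SL_2(\BZ)}\phi$. In the second case $\deg(\bar f)=-1$ forces $\det g=-1$; writing $g=hw$ with $h\in\SL_2(\BZ)$ and using $w^{-1}=w$,
\[
\phi' = hw\,\phi^{-1}\,w^{-1}h^{-1} = h\,(w\phi^{-1}w)\,h^{-1},
\]
so $\phi'\sim_{\SL_2(\BZ)} w\phi^{-1}w$, as claimed.

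The hard part is the only-if direction, and within it the reduction of $f$ to a fiber-preserving map: this rests on the uniqueness up to isotopy of the Sol fibration, which I would invoke from the same incompressible-torus analysis that underlies Proposition~\ref{Class1}. Once the map is fiber-preserving, the orientation accounting above is routine, and the use of Lemma~\ref{inverse} in the ``if'' direction is exactly what converts the orientation-reversal of the base into the conjugation by $w$ appearing in the statement.
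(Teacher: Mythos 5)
Your proof is correct and follows essentially the same route as the paper's: reduce to the unoriented classification (Proposition~\ref{Class1}(1), equivalently the fiber-preserving reduction), split a $\GL_2(\BZ)$-conjugator as $A=Bw$ with $B\in\SL_2(\BZ)$, and then sort the four resulting cases by orientation using Lemma~\ref{inverse} together with $\det w=-1$. You merely spell out the degree bookkeeping $\deg f=\det(g)\cdot\deg(\bar f)$ that the paper leaves implicit.
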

 \begin{proof} By Proposition \ref{Class1} (1), $M_{\phi'}$ is homeomorphic to $M_\phi$ if and only if $\phi'=A\phi ^{\pm}A^{-1}, A\in\GL_2(\BZ)$.
 If $A\notin SL_2(\BZ)$, we write $A=Bw$ with $B\in SL_2(\BZ)$. So  $M_{\phi'}$ is homeomorphic to $M_\phi$ if and only if
  $\phi'$ is $\SL_2(\BZ)$ conjugate to $\phi^{\pm 1}$ or $w\phi^{\pm 1} w$. 
The conclusion follows by Lemma \ref{inverse} and that $\det w=-1$, that is $w$ is orientation reversing on $T$. 
 \end{proof}
 
 \s{\bf Commensurable classes of Sol 3-manifolds}. 
We call two manifolds  $M_1$ and $M_2$ commensurable, if they  have  a common finite cover.
It is easy to see that commensurable relation is an equivalent relation. We will use $\CM$ to denote the commensurable class of 
$M$. 

 Suppose $K=\BQ(\sqrt d)$ is a real quadratic field, where $d>0$ is square free.
Define the fundamental discriminant $D_K$ of $K$ by $D_K=d$ if 
$d\equiv 1 \, \text{mod} \, 4$ and $D_K=4d$ otherwise \cite[p.92]{EW}.

Each commensurable class $\CM$ of Sol 3-manifold contains an orientable torus bundle, eigenvalues of whose monodromy map generates over $\BQ$ a real quadratic field $K$. The   fundamental discriminant $D_K$, denoted by $D_\CM$ and called the discriminant of $\CM$.  


\begin{prop}\label{Class2} There is a one-to-one bijection between
\begin{itemize}
	\item commensurable classes $\CM$ of Sol 3-manifolds;
	\item real quadratic fields $\BQ(\sqrt {D_\CM})$ (or equivalently, fundamental positive discriminants $D_\CM$),
\end{itemize}
\end{prop}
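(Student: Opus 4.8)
The plan is to establish the asserted bijection by checking that the assignment $\CM\mapsto D_\CM$ is well-defined, surjective, and injective onto the set of fundamental positive discriminants. By Proposition \ref{Class1} every Sol 3-manifold is a torus bundle $M_\phi$ or a semi-bundle $N_\psi$, and every $N_\psi$ is doubly covered by a torus bundle (pull back along the torus double cover of the Klein bottle); hence, as already noted before the statement, each commensurable class $\CM$ contains an orientable torus bundle $M_\phi$ with $\phi\in\SL_2(\BZ)$ Anosov. Thus everything reduces to a single commensurability criterion: \emph{$M_\phi$ and $M_{\phi'}$ are commensurable if and only if the eigenvalues of $\phi$ and $\phi'$ generate the same real quadratic field.}

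First I would prove surjectivity. Given a real quadratic field $K$, I choose a norm-one unit $\epsilon\in\CO_K^\times$ with $\epsilon\neq\pm1$ (the fundamental unit if its norm is $+1$, otherwise its square) and let $\phi\in\SL_2(\BZ)$ be the matrix of multiplication by $\epsilon$ on $\CO_K\cong\BZ^2$ in a fixed integral basis. Then $\phi$ is Anosov with eigenvalues $\epsilon,\epsilon^{-1}$ generating $K$, so $D_\CM=D_K$. As $K$ runs over all real quadratic fields this realizes every fundamental positive discriminant, via the standard bijection between real quadratic fields and such discriminants.

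Both well-definedness and injectivity follow from the criterion. For the forward (easy) direction, a common finite cover of $M_\phi$ and $M_{\phi'}$ is commensurable to a torus bundle $M_\psi$ whose monodromy is, up to $\GL_2(\BQ)$-conjugacy after a lattice refinement, a power of each of $\phi$ and $\phi'$; since raising a quadratic unit to a nonzero power does not change the field it generates, $\BQ(\lambda_\psi)=\BQ(\lambda_\phi)=\BQ(\lambda_{\phi'})$. This shows simultaneously that $D_\CM$ is independent of the chosen representative and that different fields give different classes. For the converse, if the eigenvalues of $\phi,\phi'\in\SL_2(\BZ)$ both generate $K$, then by Dirichlet's unit theorem they equal $\pm\epsilon^a,\pm\epsilon^b$ for the fundamental unit $\epsilon$ and nonzero integers $a,b$; hence suitable powers $\phi^{b}$ and $(\phi')^{a}$ have matching eigenvalues, i.e.\ equal characteristic polynomials after adjusting a sign. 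Two Anosov matrices with the same characteristic polynomial are conjugate over $\BQ$, and a rational conjugacy becomes integral after passing to a common finite-index sublattice of $\BZ^2$; this manufactures a common finite cover, so $M_\phi$ and $M_{\phi'}$ are commensurable.

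The hard part will be the converse direction: turning ``same field'' into an honest common finite cover. The subtlety is that $\GL_2(\BQ)$-conjugate integral matrices need not be $\GL_2(\BZ)$-conjugate---the discrepancy is exactly measured by the ideal class group $C(D)$, which is why that group resurfaces in Theorem \ref{main1}. Commensurability, being coarser than homeomorphism, only demands conjugacy after refining the lattice and is therefore blind to the class group; but making precise the bookkeeping of powers, of the sign ambiguity coming from norm $\pm1$ units, and of the sublattice indices---and verifying that the cover one builds is genuinely finite and covers both bundles---is where the real work lies. The number-theoretic ingredients (Dirichlet's unit theorem and the elementary classification of positive fundamental discriminants) are routine by comparison.
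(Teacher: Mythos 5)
Your plan is correct and follows essentially the same route as the paper: reduce to torus bundles, use Dirichlet's unit theorem to write both sets of eigenvalues as powers of the fundamental unit (taking even powers, or ``adjusting a sign'', to kill the norm ambiguity), and convert a rational conjugacy of matching powers into an integral one on a finite-index sublattice --- which is exactly the horizontal-covering criterion of Lemma \ref{h-v}(1) that the paper invokes. The one ingredient you take for granted, and which the paper must (and does) supply separately, is that every covering between Sol torus bundles is fiber-preserving up to isotopy (Lemma \ref{v}, via uniqueness of the incompressible torus fibration); this underlies your ``easy'' forward direction.
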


For each torus bundle $M_\phi: T\to M_\phi \to S^1$, we have a short fiber exact sequence
	
	$$1\to \pi_1(T)\to \pi_1(M_\phi)\to \pi_1(S^1)\to 1. \qquad (2.1) $$
	
	 For each covering map $p: T\to T$, we can homotopy $p$ to be non-degenerated linear map, that is   $p \in M_2(\BZ)$ and $p$ is of rank $2$.  For each fiber preserving covering map $f: M_\psi\to M_\phi$ between torus bundles,  there is an induced commutative 
	diagram between of these exact sequences.
	
	$$\xymatrix{
1\ar[r] &    \pi_1(T) \ar[r]^{i'_*} \ar[d]^{f|_*} &   \pi_1(M_\psi)  \ar[r]^{q'_*} \ar[d]^{f_*} & \ar[d]^{\bar f_*}  \pi_1(S^1) \ar[r] & 1\\
1\ar[r] &      \pi_1(T) \ar[r]^{i_*}                   & \pi_1(M_\phi)     \ar[r]^{q_*}                  &  \pi_1(S^1)  \ar[r]& 1   }   \qquad (2.2)                                                                $$
	
	Call the covering $f$ is vertical, if the left side vertical map $f|_*$ is an isomorphism, that is to say the covering is from the circle direction;
	and the covering is horizontal, if the right side vertical map $\bar f_*$ is an isomorphism, that is to say the covering is from the torus direction.
	
	Call a closed orientable 3-manifold $M$  Haken, if each embedded 2-sphere in $M$ bounds a 3-ball in $M$, and there 
exists a closed orientable embedded surface of genus $\ge 1$ in $ M$ which induce an injection $\pi_1$. 
Each Sol 3-manifold is Haken.  

\begin{lem}\label{v} Suppose $M_\phi$ and $M_\psi$ are orientable torus bundle supporting Sol geometry. Then 

(1) Each Sol $M_\phi$ has unique torus bundle structure up to isotopy.

(2) Any covering $f: M_\psi \to M_\phi$ is fiber preserving up to isotopy.

\end{lem}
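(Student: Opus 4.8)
The plan is to prove both statements by exploiting the rigidity of Sol geometry, which forces essentially unique geometric structures. For part (1), I would appeal to the fact that a Sol 3-manifold $M_\phi$ has a canonical Seifert-fibered-free structure: the Sol geometry $\BR^2 \rtimes \BR^1$ has a distinguished invariant codimension-one foliation by the flat $\BR^2$ planes, and the torus fibers of $M_\phi$ are precisely the compact leaves descending from these planes. The key observation is that an incompressible torus in a Sol manifold must be isotopic to a fiber. I would show this by the following steps: first, any incompressible torus $T'$ in $M_\phi$ can be homotoped (using that $M_\phi$ is Haken and aspherical) into a standard position; second, on the level of $\pi_1$, the subgroup $\pi_1(T') \cong \BZ^2$ must be conjugate into the fiber subgroup $\pi_1(T) \cong \BZ^2$, because $\BZ^2$ subgroups of $\pi_1(M_\phi) = \BZ^2 \rtimes_\phi \BZ$ are highly constrained---the only normal (or even maximal abelian) rank-two subgroup is the fiber, since the monodromy $\phi$ being Anosov (no eigenvalue $\pm 1$) means no nontrivial element of $\pi_1(S^1)$ commutes with a finite-index subgroup of the fiber; third, Waldhausen's theorem on Haken manifolds upgrades this algebraic statement to an isotopy. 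Hence the fiber is determined up to isotopy, giving uniqueness of the bundle structure.

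For part (2), given a covering $f: M_\psi \to M_\phi$, I would argue that $f$ respects the fiber structures up to isotopy. The cleanest route is to transport the unique fiber structure of $M_\phi$ back through $f$: the preimage $f^{-1}(T)$ of a fiber torus $T \subset M_\phi$ is an embedded incompressible surface in $M_\psi$ (incompressibility is preserved under covering since $f$ is $\pi_1$-injective on the relevant subgroups), and each of its components is a torus mapping to $T$ by a covering. By part (1) applied to $M_\psi$, each such component must be isotopic to a fiber of $M_\psi$. One then checks that after an ambient isotopy of $M_\psi$ carrying these tori to genuine fibers, the map $f$ becomes fiber preserving in the sense of diagram (2.2): the induced map on $\pi_1$ carries the fiber subgroup of $\pi_1(M_\psi)$ into the fiber subgroup of $\pi_1(M_\phi)$, which is exactly the statement that $f|_*$ lands in the correct place and that $\bar f_*$ is well defined on the base circles.

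I expect the main obstacle to be the step that promotes the algebraic normality of the fiber subgroup to a clean topological isotopy statement, particularly controlling the behavior near the base circle and ensuring the homotopy provided by Waldhausen's rigidity can be taken to be fiber preserving rather than merely a homotopy equivalence. A subtle point is verifying that $f^{-1}(T)$ has \emph{no} spurious components lying in bad position; this uses that $M_\psi$ is Haken with no essential spheres, so every incompressible torus is either boundary-parallel (impossible here, as $M_\psi$ is closed) or isotopic to a fiber by part (1). I would handle this by an innermost-curve and minimal-intersection argument: put $f^{-1}(T)$ transverse to a fixed fiber of $M_\psi$ and minimize intersections, using the incompressibility to eliminate trivial circles of intersection, which forces the two torus families into parallel position. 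Once the fibers are matched, the commutativity of diagram (2.2) is formal, and the classification of the possible vertical and horizontal components of $f$ follows from the structure of finite-index subgroups of $\BZ^2 \rtimes_\phi \BZ$.
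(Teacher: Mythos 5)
Your proposal is correct and follows essentially the same route as the paper: both reduce part (1) to the fact that an incompressible fiber torus in a torus bundle with Anosov monodromy is isotopic to the standard fiber $T$, and both prove part (2) by pulling the fibration of $M_\phi$ back through $f$ (so that $f$ is fiber preserving for the lifted fibration) and then invoking the uniqueness from part (1). The only real difference is that where the paper simply cites Hatcher's Lemma 5.2 for the key isotopy step, you derive it directly from the structure of rank-two abelian subgroups of $\BZ^2\rtimes_\phi\BZ$ (no nonzero vector is fixed by a power of an Anosov matrix, so any such subgroup lies in the fiber subgroup, and a normal one with quotient $\BZ$ must be all of it) together with Waldhausen's isotopy theorem for Haken manifolds --- a workable self-contained substitute; your worries about ``spurious components'' of $f^{-1}(T)$ in part (2) are unnecessary, since those components are exactly the fibers of the lifted fibration and part (1) applies to that fibration as a whole.
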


\begin{proof} 
(1) Suppose $T'$ is a torus fiber of a fiberation of $M_\phi$. According to (2.1), $T\to M_\phi$ induces an  injection
	on $\pi_1$, hence $T'$ is incompressible. Since $|\tr (\phi)|>2$, $\phi$ is not conjugate to $\matrixx{1}{0}{n}{1}$, 
	by \cite[Lemma 5.2]{Ha}, $T'$ is isotopy to $T$, the torus fiber of the fiberation determined by $\phi$. 
	
(2) If we fiber $M_\psi$ with lifted torus fiberation of $M_\phi$, then the covering is fiber preserving. Since $M_\psi$ also  supports Sol geometry, by (1) its original torus fiberation is isotopic to the lifted torus fiberation, and (2) follows.
\end{proof}

\begin{lem}\label{h-v} For each fiber preserving covering map $f: M_\psi\to M_\phi$ between torus bundles, $f=f_v\circ f_h$,
	where $f_h$ is a horizontal covering and $f_v$ is a vertical covering. Moreover \begin{itemize}
	\item[(1)] there is a horizontal covering $f: M_{\psi}\ra M_{\phi}$ if and only if $\psi$ is $\GL_2(\BQ)$-conjugate to $\phi^{\pm 1}$, or equivalently, $\tr \phi=\tr \psi$
	\item[(2)] there is a vertical covering $f: M_{\psi}\ra M_{\phi}$ of degree $n$ if and only if $\psi$ is $\GL_2(\BZ)$-conjugate to $\phi^{\pm n}$. 
\end{itemize}
It follows that any fiber preserving covering $f: M_{\psi}\ra M_{\phi}$ has that $\psi$ is $\GL_2(\BQ)$-conjugate to $\phi^n$ for some integer $n\neq 0$.  
\end{lem}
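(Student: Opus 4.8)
The plan is to translate everything into fundamental groups, using that Sol torus bundles are aspherical (they are Haken with infinite $\pi_1$), so that covering maps correspond bijectively to finite-index subgroups and the fiber-preserving condition is recorded exactly by the commutative diagram $(2.2)$. Write $\pi_1(M_\phi)=\BZ^2\rtimes_\phi\BZ=\langle \BZ^2, t\mid tvt^{-1}=\phi v\rangle$ and $\pi_1(M_\psi)=\langle\BZ^2,s\mid svs^{-1}=\psi v\rangle$ (in the orientable case $\phi,\psi\in\SL_2(\BZ)$). A fiber-preserving covering $f$ is then encoded by three pieces of data read off from $(2.2)$: the rank-$2$ integer matrix $A=f|_*\in M_2(\BZ)$ on the fiber, the integer $m\neq0$ with $\bar f_*(s)=t^m$ on the base, and a shift $u\in\BZ^2$ with $f_*(s)=u\,t^m$. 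Applying $f_*$ to the defining relation $svs^{-1}=\psi v$, and using that $t^m(Av)t^{-m}=\phi^mAv$ while conjugation by $u\in\BZ^2$ is trivial, yields the single integer matrix identity
\[
A\psi=\phi^m A .
\]
Since $A$ is invertible over $\BQ$, this already gives the closing assertion: $\psi=A^{-1}\phi^m A$ is $\GL_2(\BQ)$-conjugate to $\phi^m$ for the nonzero integer $m$.

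For the factorization I would introduce the intermediate subgroup $H'=\langle \BZ^2, t^m\rangle\le\pi_1(M_\phi)$ (equivalently $\langle\BZ^2,ut^m\rangle$, since $u\in\BZ^2$). A direct check shows $\BZ^2\trianglelefteq H'$ with $t^m v t^{-m}=\phi^m v$, so $H'\cong\BZ^2\rtimes_{\phi^m}\BZ=\pi_1(M_{\phi^m})$, and the inclusion $H'\hookrightarrow\pi_1(M_\phi)$ realizes a vertical covering $f_v\colon M_{\phi^m}\to M_\phi$ of degree $|m|$. The image $f_*\pi_1(M_\psi)=\langle A\BZ^2, ut^m\rangle$ lies in $H'$ with the same base part but fiber lattice $A\BZ^2$ of index $|\det A|$, so it defines a horizontal covering $f_h\colon M_\psi\to M_{\phi^m}$. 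This gives $f=f_v\circ f_h$, with degrees multiplying, $\deg f=|\det A|\cdot|m|$. Here one records that $\phi^m$ is again Anosov, so $M_{\phi^m}$ is a genuine Sol torus bundle.

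Statements (1) and (2) then follow by reading off the two integers. A covering is horizontal precisely when $\bar f_*$ is an isomorphism, i.e. $m=\pm1$; then $A\psi=\phi^{\pm1}A$ exhibits $\psi$ as $\GL_2(\BQ)$-conjugate to $\phi^{\pm1}$, and conversely any rational conjugacy $P^{-1}\phi^{\pm1}P=\psi$ can be cleared of denominators to an integer $A$ with $A\psi=\phi^{\pm1}A$, from which $v\mapsto Av$, $s\mapsto t^{\pm1}$ is an injection of finite index realizing the covering. The equivalence with $\tr\phi=\tr\psi$ uses that an Anosov $\phi\in\SL_2$ has $|\tr\phi|>2$, hence distinct real eigenvalues and is regular semisimple; over $\BQ$ such matrices are conjugate iff they share a characteristic polynomial, which in $\SL_2$ is pinned down by the trace alone, while $\tr\phi^{-1}=\tr\phi$ makes the sign $\pm1$ irrelevant. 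Dually, a covering is vertical precisely when $f|_*$ is an isomorphism, i.e. $A\in\GL_2(\BZ)$; then $\psi=A^{-1}\phi^mA$ holds over $\BZ$ and the degree is $|\det A|\,|m|=|m|=n$, giving $\GL_2(\BZ)$-conjugacy of $\psi$ to $\phi^{\pm n}$, with the converse again by explicit construction.

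The main obstacle is the bookkeeping that turns purely group-theoretic data into honest covering maps of manifolds and back: one must verify the subgroups produced have finite index, invoke asphericity to upgrade $\pi_1$-level statements to maps, and track the $\phi$ versus $\phi^{-1}$ ambiguity carefully, which is exactly the orientation ambiguity of Lemma \ref{oriented}. The one genuinely substantive point, rather than bookkeeping, is the trace criterion in (1): identifying $\GL_2(\BQ)$-conjugacy with equality of traces through the regular semisimplicity of Anosov elements.
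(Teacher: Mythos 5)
Your proposal is correct and follows essentially the same route as the paper: both factor $f$ through the intermediate subgroup generated by the fiber lattice and the image of the base (your $H'$ is exactly the paper's $G=(q_*)^{-1}\bar f_*(\pi_1(S^1))$), and both reduce everything to the intertwining identity $A\psi=\phi^{m}A$ with $A$ a rank-$2$ integer matrix. The differences are only presentational --- you extract that identity from the semidirect-product presentation where the paper cuts $M_\phi$ along a fiber torus, and you read off part (2) directly from $A\in\GL_2(\BZ)$ where the paper passes through $H_1(M_\phi,\BZ)$ and the uniqueness of the fibration.
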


\begin{proof} Let $G=(q^*)^{-1}\bar f_*(\pi_1(S^1))$,  $G$. By (2.2), 
	 $G$ is subgroup of $\pi_1(M_\phi)$
	and $f_*(\pi_1(M_\psi))$ is a subgroup of $G$. Since $f: M_\psi \to M_\phi$ is a finite covering,  we have that  $G$ is a finite index subgroup of $\pi_1(M_\phi)$.
	By covering space theory,  there is a 3-manifold $M$ with $\pi_1(M)=G$ and the finite covering $f$ is docomposed into the 
	 finite covering $f_h: M_\psi \to M$ and $f_v: M \to M_\phi$. If we lift the torus bundle structure of $M_\phi$ to $M$, then both
	 $f_h$ and $f_v$ become a fiber preserving covering. One can verify that $f_h$ is  horizontal and $f_v$ is  vertical.
	 
We now show (1). Let $T$ be a  fiber torus of $M_\phi$. Since the fiber preserving covering $f$ is horizontal, $\tilde T=f^{-1}(T)$ is a fiber torus of $M_\psi$. Cutting $M_\phi$ along $T$ and $M_\psi$ along $\tilde T$,  we get an induced  a fiber preserving covering map
$$\bar f : \tilde T \times [0,1]\lra  T \times [0,1],$$
where $f(x, t)=(p(x), \epsilon t)$,  where $p: \tilde T\to T$  is a covering map, and $\epsilon=\pm 1$ which depends on the map preserving or reversing the orientation of $[0,1]$. Then from the constructions of $M_\psi$  and  $M_\phi$, we have the following commutative diagram
$$\xymatrix{
\tilde T \times \{1\} \ar[r]^{p} \ar[d]^{\psi} &   T \times \{1\}   \ar[d]^{\phi^{\epsilon}} \\
\tilde T \times \{0\} \ar[r]^{p}                   &  T \times \{0\}                      }.                                                                 $$
That is to say for any $(x, 1)\in \tilde T \times \{1\}$,  we have 
$\phi^{\epsilon} \circ p (x, 1)=p\circ \psi (x, 1),$
where we define $p(x, 1)=(p(x), 1)$. That is 
$(\phi^{\epsilon}\circ p (x), 0)=(p\circ \psi (x), 0).$ So we have $\phi^\epsilon\circ p=p\circ \psi$, or 
$$A\circ \psi =\phi^{\epsilon}\circ A, \qquad (2.3)$$ 
where $A$ is the rank 2 element  in $M_2(\BZ)$ induced by $p$.  Another direction is a direct construction.

It is clear that $\psi =A^{-1}\circ\phi^{\pm 1}\circ A$ implies that  $\tr(\phi)=\tr(\psi)$. On the other hand, if $\tr(\phi)=\tr(\psi)$, then $\phi$ and $\psi$ are conjugate in $\GL_2(\BR)$ and thus in $\GL_2(\BQ)$ and in  $\GL_2(\BZ)$ (the equation $p\phi=\psi p$ in $p$ is linear). In fact, let $\phi=\matrixx{a}{b}{c}{d}$ and $\psi=\matrixx{a+u}{w}{v}{d-u}$, then $cv\neq 0$ and one may take $p=\matrixx{1}{u/c}{0}{v/c}$.

We now show (2). First $M_{\phi^n}$ has a vertical covering structure of $M_\phi$ of degree $n$. On the other hand, $H_1(M_\phi, \BZ)=\BZ \oplus \frac{H_1(T, \BZ)}{\Im (\phi_*-\mathrm{Id})}$ and a vertical covering $M_\psi\ra M_\phi$ of degree $n$ corresponds to the index $n$ subgroup of $H_1(M_\phi, \BZ)$ given by the $\BZ$-component. By Lemma \ref{v} (1),  the uniqueness of fiberation, $\psi$ is conjugate to $\phi^{\pm n}$. 	
\end{proof}

Now we recall some facts about  real quadratic field $K=\BQ(\sqrt d)$ \cite[Chapters 3 and  4]{EW}which will be used in the proof of Proposition \ref{Class2} and thereafter. For each $x=a+b \sqrt d \in F$, define its norm $N(x)=a^2-b^2d$. Call $x$ absolutely positive if both
$x$ and $N(x)$ are positive.

Let $O_K$ be the ring consists of all algebraic integers of $K$, or equivalently, the $x\in K$ whose minimal polynomial
hase form $x^2+bx+c$ for $b, c \in \BZ$. 

Call a subring $\CO\subset O_K$ of rank 2 an order of $K$. It is known that $\CO=\BZ\oplus c^2O_K$ for $c\in \BZ$.
Define the discriminant of $\CO$ to be $c^2D_F$. Note $O_K$ itself is the maximum order of $K$.
Let $\CO^\times$ be group of all units of $\CO$. Then  Dirichlet Unit Theorem implies $\CO^\times=\BZ\oplus \BZ_2$, and call a positive generator of $\BZ$ a fundamental unit of $\CO$. 
 
\begin{proof}[Proof of Proposition \ref{Class2}]  Suppose $M_{\phi_1}, M_{\phi_2}$ are commensurable. Then there are two coverings
  $M_{\phi}\ra M_{\phi_1}, M_{\phi_2}$ between Sol torus bundles. By Lemma \ref{v}, we may assume that they are fiber preserving
  coverings. By Lemma \ref{h-v}, we have 
$$\phi=A_i \phi_i^{n_i} A_i^{-1}, \qquad i=1, 2.$$
Here $A_i\in \GL_2(\BQ)$ and $n_i\neq 0$ integers. It follows that $$\phi_2^{n_2}=(A_2^{-1} A_1) \phi_1^{n_1}(A_1^{-1} A_2).$$ Therefore, $\phi_1, \phi_2$ give rise to the same real quadratic fields. 

On the other hand, suppose $\phi_1, \phi_2$ give rise to the same real quadratic field $K$, and let $\lambda_1, \lambda_2$ be their eigenvalues, respectively. 
As the roots of characteristic polynomial of element in $SL_2(\BZ)$, both $\lambda_1, \lambda_2$ are  units  in $O_F$.
Let $\epsilon$ be a fundamental unit of $O_K$.
Since $O_K^\times =\BZ\oplus \BZ_2$,  we have 

$$\lambda_1=\pm \epsilon^{n_1}, \quad \lambda_2=\pm \epsilon^{n_2}$$
for some integers $n_1, n_2\neq 0$.
This implies $\lambda_1^{2n_2}=\lambda_2^{2n_1}$, and therefore $\tr (\phi_1^{2n_2})=\tr (\phi_2^{2n_1})$. By Lemma \ref{h-v} (1), there exists a covering $M_{\phi_1^{2n_2}}\ra M_{\phi_2^{2n_1}}$. Since $M_{\phi_2^{2n_1}}$ covers $M_{\phi_2}$, $M_{\phi_1^{2n_2}}$
covers both $M_{\phi_1}$ and $M_{\phi_2}$.
It follows that $M_{\phi_1}$ and $M_{\phi_2}$ are commensurable. 
\end{proof}



\begin{lem}\label{-1} For a Haken manifold $M$,  $M$ is achiral if and only if $-1$ is a mapping degree of $M$.
\end{lem}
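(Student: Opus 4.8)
The plan is to prove the two implications separately; the forward direction is immediate from the definitions, while the reverse direction rests on the rigidity of Haken $3$-manifolds.

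First, if $M$ is achiral it admits an orientation reversing self-homeomorphism $h$. A homeomorphism has degree $\pm 1$, and reversing orientation forces $\deg h=-1$. Hence $-1$ lies in the set of mapping degrees of $M$, and this direction requires nothing beyond the definitions.

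For the converse, suppose $f\colon M\to M$ is a self-map with $\deg f=-1$. The first step is to observe that a map of degree $\pm 1$ between closed orientable $3$-manifolds induces a surjection $f_*\colon \pi_1(M)\to\pi_1(M)$: if the image $f_*(\pi_1(M))$ had finite index $k$ then the degree would be divisible by $k$, forcing $k=1$, whereas if the index were infinite the map would factor through a noncompact cover and have degree $0$. Since $M$ is Haken, $\pi_1(M)$ is finitely generated and residually finite, hence Hopfian, so the surjective endomorphism $f_*$ is in fact an isomorphism. The second step uses that a closed Haken $3$-manifold is aspherical, i.e.\ a $K(\pi,1)$ (it is irreducible with infinite fundamental group). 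A map between aspherical complexes inducing an isomorphism on $\pi_1$ is a homotopy equivalence, so $f$ is a homotopy equivalence of $M$. By Waldhausen's theorem, every homotopy equivalence of a closed Haken $3$-manifold is homotopic to a homeomorphism $g$. Because degree is a homotopy invariant, $\deg g=\deg f=-1$, so $g$ is an orientation reversing homeomorphism and $M$ is achiral.

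The single genuine input—and the step I expect to carry the weight—is Waldhausen's theorem that homotopy equivalences of Haken manifolds are realized by homeomorphisms; the $\pi_1$-surjectivity argument and the Hopfian property are standard, and the asphericity of Haken manifolds is classical. The only point to keep track of is that the orientation behaviour of the resulting homeomorphism matches that of $f$, which is automatic since the degree is preserved under the homotopy.
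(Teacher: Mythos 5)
Your proof is correct and follows essentially the same route as the paper: degree $-1$ forces $f_*$ to be surjective, the Hopfian property of Haken fundamental groups upgrades this to an isomorphism, and Waldhausen's theorem (the content of the paper's citation to Hempel, Chap.~13) converts the resulting homotopy equivalence into a homeomorphism, necessarily orientation reversing since degree is a homotopy invariant. The only difference is that you spell out the intermediate steps (asphericity to get a homotopy equivalence, residual finiteness to get Hopfian) that the paper leaves to its references.
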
		
	
\begin{proof} Suppose $f: M\to M$ is a map of degree $-1$.
Then  $f$ induces a surjection
on $f_*: \pi_1(M)\to \pi_1(M)$ \cite[page 175]{He1}.  
Since $M$ is Haken, $\pi_1(M)$ is Hopfian,  which first implies that $f_*$ is an isomorphism  \cite[page 177]{He1}, \cite{He2}, then  implies that
$f$ is homotopic to a homeomorphism \cite[Chap. 13]{He1}, which must be  orientation reversing.  Another direction is obvious.
\end{proof}

\begin{lem}\label{semi-torus bundle} 
Each torus semi-bundle $N_\psi$ has a  unique torus bundle double cover $M_\phi$. The covering is characteristic and has a realization as $M_\phi\ra N_\psi$ with $\psi=\matrixx{a}{b}{c}{d}$ and $\phi=\matrixx{2bc+1}{2ab}{2cd}{2bc+1}$. 
\end{lem}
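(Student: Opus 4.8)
The plan is to build the cover by hand out of the two twisted $I$-bundle pieces, read off its monodromy, and then single it out among all double covers by a Klein-bottle argument.

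First, existence and the formula. Write $\tau\colon T\to T$, $(x,y)\mapsto (x+\pi,-y)$, for the free involution with $T/\tau=K$; on $H_1(T)=\BZ^2$ it linearizes to $L=\matrixx{1}{0}{0}{-1}$. The map $\sigma(x,y,z)=(x+\pi,-y,1-z)$ defining $N$ is a free involution of $T\times I$, so $p\colon T\times I\to N$ is a double cover, and it is trivial over $\partial N\cong T$: the two boundary tori $T\times\{0\}$ and $T\times\{1\}$ each map homeomorphically onto $\partial N$, with $p|_{T\times\{1\}}=\id$ and $p|_{T\times\{0\}}=\tau$ after identifying $\partial N$ with $T\times\{1\}$. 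Taking one such copy over each $N$ in $N_\psi=N^{(1)}\cup_\psi N^{(2)}$ and gluing the four boundary tori compatibly with $\psi$ produces a connected manifold $\widetilde M$ double covering $N_\psi$; since it is assembled from two copies of $T\times I$ glued along both ends, $\widetilde M$ is a torus bundle over the circle. Tracking the fibre coordinate once around the base, the monodromy is the composite of the two boundary identifications, which linearize to $\psi$ and $\tau\psi\tau$, so (up to conjugacy and inversion) $\phi=\psi L\psi^{-1}L$. A direct $2\times 2$ computation using $\det\psi=ad-bc=1$ gives $\psi L\psi^{-1}L=\matrixx{2bc+1}{2ab}{2cd}{2bc+1}$, and the hypothesis $abcd\ne 0$ forces $|\tr\phi|=|4bc+2|\ge 6$, so $\phi$ is Anosov and $M_\phi$ is a Sol torus bundle. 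This settles existence and the displayed formula.

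Next, uniqueness. Let $q\colon \widetilde M\to N_\psi$ be any connected double cover with $\widetilde M$ a torus bundle. The gluing torus $T_\psi$ is incompressible, so each core Klein bottle $K^{(i)}\subset N^{(i)}$ is $\pi_1$-injective in $N_\psi$. Its preimage $q^{-1}(K^{(i)})$ is a double cover of $K^{(i)}$, hence a torus, a Klein bottle, or two Klein bottles; any Klein-bottle component would be an incompressible Klein bottle in the orientable Sol torus bundle $\widetilde M$, which is impossible, since the incompressible surfaces of such a bundle are the fibre tori (cf. \cite{Ha}). Thus $q^{-1}(K^{(i)})$ is the torus, and as $K^{(i)}\hookrightarrow N^{(i)}$ is a homotopy equivalence this forces $q^{-1}(N^{(i)})\to N^{(i)}$ to be exactly the torus double cover $T\times I$. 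Equivalently, the class $c\in H^1(N_\psi;\BZ/2)$ classifying $q$ restricts on each $N^{(i)}$ to the class $c_i$ of the torus cover. A Mayer-Vietoris computation for $N_\psi=N^{(1)}\cup_{T_\psi}N^{(2)}$ shows that $H^1(N_\psi;\BZ/2)\to H^1(N^{(1)};\BZ/2)\oplus H^1(N^{(2)};\BZ/2)$ is injective, so at most one class restricts to $(c_1,c_2)$. Hence the torus-bundle double cover is unique and equals the $M_\phi$ built above.

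Finally the cover is characteristic: uniqueness makes it canonical, so any self-homeomorphism of $N_\psi$ carries it to a torus-bundle double cover, hence to itself, and it lifts; as $N_\psi$ is Haken every automorphism of $\pi_1(N_\psi)$ is geometric (Waldhausen), so $q_*\pi_1(M_\phi)$ is characteristic. The step I expect to be the crux is the uniqueness argument, specifically ruling out a lifted core Klein bottle in a torus-bundle cover; this rests on the fact that an orientable Sol torus bundle contains no incompressible Klein bottle, combined with the incompressibility of the cores and the Mayer-Vietoris injectivity that promotes this local constraint to a single global cover. By contrast the existence computation is routine once the boundary behaviour of $p\colon T\times I\to N$ is recorded, the only delicate point being the orientation and translation bookkeeping that yields the monodromy in the form $\psi L\psi^{-1}L$ matching the stated matrix.
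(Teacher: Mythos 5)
Your argument is correct, but it runs along a genuinely different track from the paper's. The paper does not construct the cover: it cites Hatcher for the existence of a torus-bundle double cover (the displayed formula for $\phi$ is asserted, not derived), and it obtains uniqueness and the characteristic property purely algebraically from the short exact sequence $1\to\pi_1(T)\to\pi_1(N_\psi)\to\BZ_2*\BZ_2\to 1$: among the three index-$2$ subgroups of $\BZ_2*\BZ_2$, exactly one is infinite cyclic, so the corresponding subgroup of $\pi_1(N_\psi)$ is preserved by every automorphism once one knows $\pi_1(T)$ is characteristic. You instead build the cover by hand from two copies of $T\times I$ and actually compute the monodromy $\psi L\psi^{-1}L=\matrixx{2bc+1}{2ab}{2cd}{2bc+1}$ (a verification the paper omits, so this is a genuine plus), prove uniqueness geometrically by excluding $\pi_1$-injective Klein bottles in an orientable Sol torus bundle and then pinning down the cover with Mayer--Vietoris injectivity of $H^1(N_\psi;\BZ/2)\to H^1(N^{(1)};\BZ/2)\oplus H^1(N^{(2)};\BZ/2)$, and deduce the characteristic property from uniqueness via Waldhausen's theorem that automorphisms of the fundamental group of a closed Haken manifold are geometric. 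The paper's route is shorter and needs less 3-manifold machinery; yours is more self-contained on the existence side and makes the formula for $\phi$ transparent.

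Two small points to tighten. First, $|\tr\phi|=|4bc+2|\ge 6$ needs the observation that $bc\ne -1$, which follows from $ad=1+bc$ and $abcd\ne 0$; as written the inequality is asserted from $bc\ne 0$ alone. Second, the exclusion of an incompressible Klein bottle should not be hung directly on Hatcher's classification of incompressible surfaces in torus bundles, which concerns two-sided surfaces, whereas a Klein bottle in an orientable manifold is one-sided; the clean statement is that the Klein bottle group $\BZ\rtimes_{-1}\BZ$ does not embed in $\BZ^2\rtimes_\phi\BZ$ for Anosov $\phi$ (the normal $\BZ$ would have to land in the fiber $\BZ^2$, forcing $-1$ to be an eigenvalue of a power of $\phi$), which is elementary and does what you need.
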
 

\begin{proof}
 It is known that each $N_\psi$ is doubly covered by a torus bundle $M_\phi$ (\cite{Ha}, Theorems 2.6 and 2.8).  We just verify that $M_\phi$ is unique 
and characteristic.

For each torus semi-bundle $N_\psi$, we have a short fiber exact sequence
		$$1\to \pi_1(T)\to \pi_1(N_\psi)\to \BZ_2*\BZ_2\to 1. \qquad (2.4)$$	
The unique torus bundle double cover $f: M_\phi\to N_\psi$ is given by the unique index 2  subgroup $\BZ\subset \BZ_2*\BZ_2$,
which is also characteristic, indeed $\BZ_2*\BZ_2$ has only three index 2 subgroups, two of them isomorphic to $\BZ_2*\BZ_2$, one is isomorphic to 
$\BZ$.
  There is an induced commutative 
	diagram between of these exact sequences.
	
$$\xymatrix{
1\ar[r] &    \pi_1(T) \ar[r]^{i'_*} \ar[d]^{f|_*} &   \pi_1(M_\phi)  \ar[r]^{q'_*} \ar[d]^{f_*} & \ar[d]^{\bar f_*}  \BZ\ar[r] & 1\\
1\ar[r] &      \pi_1(T) \ar[r]^{i_*}                   & \pi_1(N_\psi)     \ar[r]^{q_*}                  &  \BZ_2*\BZ_2  \ar[r]& 1   }   \qquad (2.5)                                                                $$
where $f|_*$ is an isomorphism. Then we can identify $\pi_1(T)\subset \pi_1(M_\phi)$ and $\pi_1(T)\subset \pi_1(N_\psi)$.
Note any homeomorphism on $\tau$ on $N_\psi$ keeps $T=\partial N$ invariant up to isotopy and $\pi_1(T)$ is a normal subgroup
of $\pi_1(N_\psi)$.
It follows that $\pi_1(T)\subset \pi_1(N_\psi)$ is characteristic. Then for any automorphism $\eta$ on $\pi_1(N_\psi)$,
 $\eta$  induces an automorphism $\bar \eta$ on $\BZ_2*\BZ_2$. Since $\bar \eta(\BZ)=\BZ$, we have  
 $q^{-1}(\bar \eta(\BZ))=q^{-1}(\BZ)$. Since $q^{-1}(\bar \eta(\BZ))=\eta (q^{-1}(\BZ))$, we have $\eta (q^{-1}(\BZ))=q^{-1}(\BZ)$.
 That is $\pi_1(M_\phi)=q^{-1}(\BZ)$ is characteristic.
\end{proof}

\begin{lem}\label{virtual}
$\CM$ is achiral if and only if each element in $\CM$ is virtually achiral.
\end{lem}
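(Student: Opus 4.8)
The plan is to prove the two implications separately. The reverse implication is immediate, so the real content lies in the forward direction, which I would handle by a characteristic-cover construction.

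First I would dispose of the ($\Leftarrow$) direction. Assume every element of $\CM$ is virtually achiral. The class is nonempty, so fix any $M\in\CM$; by hypothesis $M$ has an achiral finite cover $\tilde M$. A finite cover of $M$ is commensurable with $M$, so $\tilde M\in\CM$, and since $\tilde M$ is achiral, $\CM$ is achiral by definition. No geometry of Sol manifolds is needed here.

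For the ($\Rightarrow$) direction, suppose $\CM$ contains an achiral element $M_1$ carrying an orientation-reversing self-homeomorphism $h$ (in particular $M_1$ is orientable), and let $M\in\CM$ be arbitrary. Commensurability gives a common finite cover $W$ of $M$ and $M_1$, so that $\pi_1(W)=H$ is a finite-index subgroup of $\pi_1(M_1)$. The key step is to replace $W$ by a \emph{characteristic} cover of $M_1$ lying below it: since $\pi_1(M_1)$ is finitely generated (a compact $3$-manifold group), it has only finitely many subgroups of index $[\pi_1(M_1):H]$, and the intersection $N$ of all of them is a finite-index characteristic subgroup contained in $H$. Letting $\hat M_1\to M_1$ be the cover corresponding to $N$, the inclusion $N\le H$ makes it factor as $\hat M_1\to W\to M_1$, whence $\hat M_1\to W\to M$ exhibits $\hat M_1$ as a finite cover of $M$.

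Finally I would verify that $\hat M_1$ is achiral, which makes $M$ virtually achiral and finishes the proof. Because $N$ is characteristic and $h_*$ is an automorphism of $\pi_1(M_1)$, we have $h_*(N)=N$, so the lifting criterion for coverings produces a self-homeomorphism $\hat h$ of $\hat M_1$ covering $h$. Since $\hat M_1\to M_1$ is a local homeomorphism and $M_1$ is orientable, $\hat M_1$ is orientable and $\hat h$, being locally modeled on $h$, is orientation-reversing. I expect the main obstacle to be precisely this upgrading step: an arbitrary common cover $W$ need not be achiral, and the whole argument hinges on the fact that an orientation-reversing homeomorphism always lifts through a characteristic finite cover, so that achirality descends from $M_1$ to a common cover of $M$ and $M_1$.
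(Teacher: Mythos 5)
Your proof is correct and follows essentially the same route as the paper's: the backward direction is immediate from the definition, and the forward direction passes from an arbitrary common finite cover $W$ to the characteristic cover obtained by intersecting the finitely many subgroups of $\pi_1(M_1)$ of index $[\pi_1(M_1):\pi_1(W)]$, through which the orientation-reversing homeomorphism lifts. No meaningful differences from the paper's argument.
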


\begin{proof} Suppose  $\CM$ is achiral, we may assume that $M$ is achiral, that is there is an orientation reversing homeomorphism
$\tau$ on $M$. For any $M'\in \CM$, let $\tilde M$ be a common finite cover
of $M$ and $M'$. Suppose the covering degree $\tilde M\to M$ is $d$, then the index $[\pi_1(M): \pi_1(\tilde M )]=d$. Since $G=\pi_1(M)$ is finitely generated,  by group theory $G$ has only finitely many subgroup of indes $d$.
Let $$H= \bigcap_{\{G_i, [G: G_i]=d\}} G_i.$$
Then $H$ is a finite index charecteristic  subgroup of $G$. Let $M_H\to M$ be the covering corresponding to $H$.
Since $\tau_*(H)=H$, $\tau$ lifts to a map $\tau_H$ on $M_H$, which must be an orientation reversing homeomorphism, that is $M_H$ is achiral.
Clearly $M_H$ covers $\tilde M$, so covers $M_2$. That is $M_2$ is virtually achiral. We prove one direction.

Another direction is from the definition.
\end{proof}

 \section{Achirality, Gauss'  Theory of Class Groups and  Genus}
  
A closed oriented manifold is called achiral if it admits an orientation reversing homeomorphism. 

\begin{lem}\label{pmA} Let $M=M_\phi$ be an oriented Sol torus bundle. Then $M_\phi$ is achiral if and only if there exists $A\in \GL_2(\BZ)$ such that one of the following holds. \begin{enumerate}\item[(i)] $\phi A=A\phi$,  $\det A=-1$. \item[(ii)] $\phi A=A\phi^{-1}$, $\det A=1$.\end{enumerate}  \end{lem}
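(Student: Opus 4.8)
The plan is to characterize achirality through oriented homeomorphism type rather than working directly with an orientation reversing map. Since $M_\phi$ is a closed oriented $3$-manifold, an orientation reversing self-homeomorphism is exactly the same datum as an orientation preserving homeomorphism $M_\phi \to -M_\phi$; hence $M_\phi$ is achiral if and only if $M_\phi$ and $-M_\phi$ are homeomorphic as oriented manifolds. By Lemma \ref{inverse} we have $-M_\phi = M_{\phi^{-1}}$, so the whole question reduces to deciding when $M_\phi$ and $M_{\phi^{-1}}$ are orientation-preservingly homeomorphic, and this is precisely the situation controlled by Lemma \ref{oriented}.

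First I would apply Lemma \ref{oriented} with $\phi' = \phi^{-1}$. It tells us that $M_\phi$ and $M_{\phi^{-1}}$ are homeomorphic as oriented bundles if and only if $\phi^{-1}$ is $\SL_2(\BZ)$-conjugate to $\phi$ or to $w\phi^{-1}w$, where $w=\matrixx{}{1}{1}{}$. This splits the analysis into exactly two cases, which I expect to correspond to conditions (ii) and (i), respectively. In the first case $\phi^{-1} = B\phi B^{-1}$ with $B\in\SL_2(\BZ)$; taking inverses gives $\phi = B\phi^{-1}B^{-1}$, i.e. $\phi B = B\phi^{-1}$, so $A:=B$ satisfies (ii) with $\det A = 1$. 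In the second case $\phi^{-1} = B(w\phi^{-1}w)B^{-1}$ with $B\in\SL_2(\BZ)$; using $w = w^{-1}$ and setting $A := Bw$ (so $\det A = \det B\,\det w = -1$), the relation rewrites as $\phi^{-1} = A\phi^{-1}A^{-1}$, i.e. $A$ commutes with $\phi^{-1}$ and hence with $\phi$, which is exactly (i).

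For the converse I would reverse these substitutions. Given (ii) with $A\in\SL_2(\BZ)$, the relation $\phi A = A\phi^{-1}$ yields $\phi^{-1} = A\phi A^{-1}$, placing $\phi^{-1}$ in the $\SL_2(\BZ)$-conjugacy class of $\phi$. Given (i) with $\det A = -1$, I would write $A = Bw$ with $B := Aw \in \SL_2(\BZ)$ and check directly that $B^{-1}\phi^{-1}B = w\phi^{-1}w$, placing $\phi^{-1}$ in the conjugacy class of $w\phi^{-1}w$. In either case Lemma \ref{oriented} then returns an orientation preserving homeomorphism $M_\phi \cong M_{\phi^{-1}} = -M_\phi$, so $M_\phi$ is achiral.

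The matrix bookkeeping in the translation step is routine; the one point that demands care is the very first reduction, namely applying the orientation-preserving/orientation-reversing dictionary consistently so that the factor $w$ (which is orientation reversing on $T$ because $\det w = -1$) is paired with the correct power $\phi^{\pm 1}$, exactly as in the proof of Lemma \ref{oriented}. Keeping that dictionary straight is what guarantees that the first case produces $\det A = +1$ and the second $\det A = -1$, and not the other way around; this is the main place where a sign error could creep in.
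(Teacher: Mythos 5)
Your proof is correct, but it takes a different route from the paper's. The paper argues directly with the self-homeomorphism: it invokes Lemma \ref{v} to isotope $f$ to a fiber-preserving map, reads off the matrix relation $A\circ\phi=\phi^{\epsilon}\circ A$ with $A\in\GL_2(\BZ)$ from the gluing data (equation (2.3) in the proof of Lemma \ref{h-v}), and then observes that $f$ reverses orientation exactly when $\epsilon\cdot\det A=-1$, so the two admissible sign patterns are precisely (i) and (ii). You instead use the dictionary ``orientation reversing self-homeomorphism of $M_\phi$ $=$ orientation preserving homeomorphism $M_\phi\to -M_\phi$,'' combine it with Lemma \ref{inverse} to identify $-M_\phi$ with $M_{\phi^{-1}}$, and then let Lemma \ref{oriented} (applied with $\phi'=\phi^{-1}$) do all the work; the translation of the two conjugacy alternatives into conditions (ii) and (i) via $A:=B$ and $A:=Bw$ is carried out correctly, including the determinant bookkeeping. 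Since Lemma \ref{oriented} is itself proved from Proposition \ref{Class1} together with Lemma \ref{inverse} and the orientation behaviour of $w$, both arguments ultimately rest on the same classification input; yours is not more elementary, but it is cleaner in that it avoids re-deriving the fiber-preserving covering equation, at the cost of leaning on the (correct, but implicit) convention that ``homeomorphic'' in Lemma \ref{oriented} means orientation-preservingly homeomorphic. Your closing remark about where a sign error could creep in is exactly the right point of caution.
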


\begin{proof} Let $f$ be a self homeomorphism on $M_\phi$. By Lemma \ref{v}, we assume that $f$ is fiber preserving, so
$f$ is a fiber preserving horizontal covering,  and  (2.3) in  the proof of Lemma \ref{h-v} become  
$$A\circ \phi =\phi^{\epsilon}\circ A, $$ where $A\in \GL_2(\BZ)$ since the restriction of $f$ on the fiber $T$ is a homeomorphism.

It is clear that $\epsilon \cdot \det A=-1$ if and only if $f$ is orientation reversing. 
\end{proof}

We now recall  Gauss'  of class group theory  \cite{Gauss}, \cite[Chapter 5]{Fl}, \cite[Chapters 3, 4]{EW}.

Recall $\SL_2(\BZ)$ acts on the complex plane  by 
$$\phi z=\frac{az+b}{cz+d}, \qquad \forall \phi=\matrixx{a}{b}{c}{d}\in \SL_2(\BZ),\ z\in \BC, $$ which induces an action of $\SL_2(\BZ)$ on $\BR$.  There are  two conjugate real quadratic numbers $z_\phi, \bar z_\phi$ fixed by $\phi$ when  $|\tr \phi|>2$. 

Let $M=M_\phi, \phi=\matrixx{a}{b}{c}{d}\in \SL_2(\BZ),$  $|\tr \phi|>2$. 
Let $$u=\sign \tr(\phi)\gcd (b, c, (d-a)), \,\text{and}\,   \,\alpha=\frac{c}{u}, \,\beta= \frac{d-a}{u}, \,\gamma=\frac{-b}{u}.$$
We define the discriminant of $\phi$ and of $M$ to be
$$D_M=D_\phi:= \frac{(d+a)^2-4}{u^2}=\beta^2-4\alpha\gamma>0,\qquad (3.1)$$

It is easy to see that $D=D_\phi \equiv 0, 1\mod 4$ is a non-square positive integer. 

\begin{lem} \label{realization of D}Let $D\equiv 0, 1\mod 4$ be a non-square positive  integer. Then there exist $M_\phi$ such that $D_\phi=D$.
\end{lem}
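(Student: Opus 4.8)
The plan is to build $\phi$ by reverse-engineering the discriminant formula (3.1): rather than start from a matrix and read off an invariant, I prescribe the data that (3.1) would produce and assemble the matrix from it. Concretely, if $\phi=\matrixx{a}{b}{c}{d}$ has trace $t=a+d$ with $|t|>2$ and $u=\sign\gcd(b,c,d-a)>0$, then $(\alpha,\beta,\gamma)=(c/u,(d-a)/u,-b/u)$ is a primitive integer triple with $\beta^2-4\alpha\gamma=D_\phi=(t^2-4)/u^2$. So I would fix a primitive triple $(\alpha,\beta,\gamma)$ of discriminant $D$ together with a pair $(t,u)$, and recover the entries by $a=(t-u\beta)/2,\ d=(t+u\beta)/2,\ c=u\alpha,\ b=-u\gamma$.

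First I would exhibit the primitive triple: take $(\alpha,\beta,\gamma)=(1,0,-D/4)$ when $D\equiv 0 \mod 4$ and $(\alpha,\beta,\gamma)=(1,1,(1-D)/4)$ when $D\equiv 1\mod 4$; in both cases $\beta^2-4\alpha\gamma=D$ and $\gcd(\alpha,\beta,\gamma)=1$. Next, forcing $\det\phi=1$ is exactly the requirement $t^2-u^2D=4$, since a direct computation gives $ad-bc=(t^2-u^2(\beta^2-4\alpha\gamma))/4=(t^2-u^2D)/4$. Because $D>0$ is a non-square, the classical Pell equation $X^2-DY^2=1$ has a solution with $Y\ge 1$; setting $t=2X$ and $u=2Y$ yields $t^2-u^2D=4$ with $t,u$ both even and $|t|=2X>2$. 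The evenness of $t,u$ guarantees $a,d\in\BZ$ irrespective of the parity of $\beta$, so $\phi\in\SL_2(\BZ)$ with $|\tr\phi|=|t|>2$; that is, $M_\phi$ is indeed a Sol torus bundle.

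Finally I would confirm $D_\phi=D$. Since the triple is primitive and $u>0$, we get $\gcd(b,c,d-a)=\gcd(u\gamma,u\alpha,u\beta)=u$, so the quantity attached to $\phi$ by (3.1) is $\sign(t)\,u=u$ and therefore $D_\phi=(t^2-4)/u^2=u^2D/u^2=D$. The only step needing care is precisely this matching of the greatest common divisor: I must ensure the prescribed triple is genuinely primitive, so that the $u$ I insert coincides with the $u$ that (3.1) reads back off $\phi$. The principal forms above are chosen exactly to secure primitivity, and the doubling $t=2X,\ u=2Y$ simultaneously handles the integrality (parity) of $a$ and $d$; beyond this bookkeeping there is no real obstacle, the existence of nontrivial Pell solutions for non-square $D$ being standard.
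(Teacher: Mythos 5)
Your construction is correct and is essentially the paper's own argument: both proofs reduce the statement to the solvability of the classical Pell equation for non-square $D$ and then write down an explicit $\SL_2(\BZ)$ matrix realizing the principal form of discriminant $D$ (indeed, in the case $D\equiv 1\bmod 4$ your matrix $\matrixx{X-Y}{\frac{D-1}{2}Y}{2Y}{X+Y}$ coincides with the paper's). The only difference is cosmetic: you phrase it as reverse-engineering $(a,b,c,d)$ from a primitive triple and a solution of $t^2-u^2D=4$, and for $4\mid D$ you use $X^2-DY^2=1$ where the paper uses $w^2-\tfrac{D}{4}v^2=1$.
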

\begin{proof} It is known that the Pell equation $x^2-dy^2=1$, for each integer $d>0$, has (infinitely many) non-trivial integer solutions.
 Consider the following Pell equations:
$$x^2-\frac{D}{4}y^2=1 \quad (4|D), \qquad x^2-Dy^2=1\quad  (4\nmid D).$$
 Let 

$$\phi=\matrixx{w}{\frac{D}{4}v}{v}{w}\quad  (4|D), \qquad \matrixx{w-v}{\frac{D-1}{2}v}{2v}{w+v}\quad  (4\nmid D),$$
where $(w, v)$ is a non-trivial integer solution of the corresponding Pell equation.
Clearly $\phi\in  \SL_2(\BZ)$. One can also chech by the definition that $D_\phi=D$.
\end{proof}

Let $D\equiv 0, 1\mod 4$ be a non-square integer. Define  
primitiive binary quadratic form $Q_\phi$ with discrimiment $D$  given by
$$Q_\phi(x,y):=\frac{cx^2+(d-a)xy-by^2}{u}=\alpha x^2+\beta xy + \gamma y^2.\qquad(3.2)$$
Note $\alpha x^2+\beta x +\gamma$ is a minimal polynomial of the fixed point of $\phi$, which is unique up to a sign,
and is determined by the fixed point of 
$\phi$. 

\begin{lem}\label{Q and -Q}
For each $\phi\in SL_2(\BZ)$, (i) $Q_{-\phi}=Q_\phi$, (ii) $Q_{\phi^{-1}}=-Q_\phi$,  and (iii) $Q_{\phi^{n}}=Q_\phi$ for $n>0$, which is  equivalent to  $ Q_{\pm \phi^n}=\text{sign}(n)Q_\phi$ for $n>0$.
\end{lem}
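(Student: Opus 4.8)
Parts (i) and (ii) I would dispatch by direct substitution into the formula for $u$ and (3.2). For (i), $-\phi=\matrixx{-a}{-b}{-c}{-d}$ has $\tr(-\phi)=-\tr\phi$ while $\gcd(-b,-c,a-d)=\gcd(b,c,d-a)$, so $u_{-\phi}=-u_\phi$; feeding this into (3.2), the sign changes in the three entries cancel the sign change in $u$, yielding $\alpha_{-\phi}=\alpha_\phi$, $\beta_{-\phi}=\beta_\phi$, $\gamma_{-\phi}=\gamma_\phi$, hence $Q_{-\phi}=Q_\phi$. For (ii), $\phi^{-1}=\matrixx{d}{-b}{-c}{a}$ has the same trace as $\phi$, so $u_{\phi^{-1}}=u_\phi$, and reading off the entries gives $\alpha_{\phi^{-1}}=-\alpha_\phi$, $\beta_{\phi^{-1}}=-\beta_\phi$, $\gamma_{\phi^{-1}}=-\gamma_\phi$, i.e. $Q_{\phi^{-1}}=-Q_\phi$. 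Each is a one-line computation once the sign of $u$ is tracked.

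The substance is (iii). My plan is first to show $Q_{\phi^n}=\pm Q_\phi$ by a common-root argument, then to pin down the sign. Since $\phi$ has two distinct eigendirections when $|\tr\phi|>2$, its M\"obius fixed points on $\BR$ coincide with those of $\phi^n$; in particular the quadratic irrational $z_\phi$ is a common root of $Q_\phi(z,1)$ and $Q_{\phi^n}(z,1)$. As $z_\phi$ is irrational (because $D_\phi$ is a non-square), the integral binary quadratic forms vanishing at $z_\phi$ form a single $\BQ$-line, so $Q_{\phi^n}=\lambda Q_\phi$ with $\lambda\in\BQ$; since both forms are primitive (the defining $u$ is exactly the relevant $\gcd$), this forces $\lambda=\pm1$. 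I would also record here that $c\neq0$ whenever $|\tr\phi|>2$ (a triangular $\SL_2(\BZ)$ matrix has trace $\pm2$), so $\alpha=c/u\neq0$ and the leading coefficient is available for comparison.

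To determine the sign, I would compare leading coefficients. Writing $\phi^n=s_n\phi+t_nI$ by Cayley--Hamilton, the lower-left entry of $\phi^n$ is $s_nc$, so $\alpha_{\phi^n}/\alpha_\phi=s_n\,u_\phi/u_{\phi^n}$, and it remains to prove this ratio is $+1$ for $n>0$. If $\lambda,\mu=\lambda^{-1}$ are the eigenvalues of $\phi$, then $s_n=(\lambda^n-\mu^n)/(\lambda-\mu)$, while $\sign u_\phi=\sign\tr\phi$ and $\sign u_{\phi^n}=\sign\tr(\phi^n)=\sign(\lambda^n+\mu^n)$. When $\tr\phi>2$ both eigenvalues are positive, so $s_n>0$ and all traces involved are positive, giving ratio $+1$. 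When $\tr\phi<-2$ both eigenvalues are negative, and a short case analysis on the parity of $n$ shows that $\sign s_n$ and $\sign\tr(\phi^n)$ flip together, so the contributions cancel and the ratio is again $+1$. I expect this last bookkeeping in the case $\tr\phi<-2$ with $n$ even to be the only genuinely delicate point, since there $s_n<0$ and $u_\phi,u_{\phi^n}$ have opposite signs, and one must check that the three sign reversals combine to $+1$.

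Finally, the reformulation $Q_{\pm\phi^n}=\sign(n)Q_\phi$ follows by assembling (i)--(iii): part (i) removes the overall $\pm$ on $\phi^n$; for $n>0$ it is exactly (iii); and for $n<0$ one writes $\phi^n=(\phi^{-1})^{|n|}$, applies (iii) to $\phi^{-1}$ and then (ii), obtaining $-Q_\phi=\sign(n)Q_\phi$.
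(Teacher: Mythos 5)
Your proof is correct and follows essentially the same route as the paper: (i) and (ii) by direct substitution into the definition of $u$ and (3.2), and (iii) via the observation that $\phi$ and $\phi^n$ have the same fixed points, so their forms agree up to sign by uniqueness of the minimal polynomial of the quadratic irrational $z_\phi$ among primitive integral forms. Your Cayley--Hamilton computation of the leading coefficient (with the parity bookkeeping on $\sign s_n$ and $\sign\tr(\phi^n)$ when $\tr\phi<-2$) supplies the sign determination that the paper leaves to ``a direct calculation,'' and it checks out in all cases.
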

\begin{proof} 
It follows from a direct calculation. A quick way to prove $Q_{\phi^{n}}=Q_\phi$ for $n>0$
is apply the sentence before the lemma and the fact $\phi$ and $\phi^n, n\ne 0$, has the same fixed points.
\end{proof}  

For each $A\in \GL_2(\BZ)$ one can verify that $$Q_{A\phi A^{-1}}(x, y)=Q_\phi ((x,y)(A^t)^{-1}). \qquad(3.3)$$
Call two primitive binary quadratic forms $Q_1$ and $Q_2$ with discriminant $D$ are $SL_2(\BZ)$-equivalent, if there exists $A\in SL_2(\BZ)$,  such that
$$Q_1(x,y)=Q_2 ((x,y)A).$$ 
 

Those $SL_2(\BZ)$-equivalent classes  of primitive binary quadratic forms of discriminant $D$  is the class group $C(D)$. For a quadratic form $Q$, denote by $[Q]$ its class in  $C(D)$. Let $Q_{0, D}$ denote the principal form of discriminant $D$, i.e.
$$x^2-\frac{D}{4}y^2, \ (\ \text{if}\ 4|D\ ); \qquad x^2+xy+\frac{1-D}{4}y^2\ (\ \text{if}\ 4\nmid D\ ).$$
Then $[Q_{0, D}]$ is the zero element in $C(D)$. For any form $Q=(\alpha, \beta, \gamma):=\alpha x^2+\beta xy+\gamma y^2$,  let 
$-Q=(-\alpha, -\beta,-\gamma)$, for $[Q]\in C(D)$, let $-[Q]$ denote its negative in $C(D)$. 


 
 


 \begin{lem}\label{rel-in C(D)} For  forms of discirminant $D$,
  
 (i) $[Q]=[Q_{0, D}]$ is equivalent to $Q$ represents $1$, $[Q]=[-Q_{0, D}]$ is equivalent to $Q$ represents $-1$;
 
 (ii) $[-Q_{0, D}]=[Q_{0, D}]$ is equivalent to $x^2-Dy^2=-4$ has an integral solution;
 
 (iii) $[-Q]=-[Q]+[-Q_{0, D}]$; 
 
 (iv) $[-Q]=\pm [Q]$ is equivalent to either $[Q_{0, D}]=[-Q_{0, D}]$ or $2[Q]=[-Q_{0, D}]$;
 
(v)  $[Q_1]=-[Q_2]$  if and only if $Q_1(x, y)=Q_2((x, y)B)$, where $B\in \GL_2(\BZ)$ and $\det B=-1$.
\end{lem}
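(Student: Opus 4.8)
The plan is to prove the five parts in the order (v), (i), (iii), (ii), (iv); the structural identity (iii) is the heart of the lemma and the rest are either standard reduction-theoretic facts or formal consequences of it. First I would establish (v), the compatibility of the $\GL_2(\BZ)$- and $\SL_2(\BZ)$-actions: writing any $B$ with $\det B=-1$ as $B=B_0A$ with $A\in\SL_2(\BZ)$ and $B_0=\matrixx{1}{0}{0}{-1}$, the substitution by $B_0$ turns $(\alpha,\beta,\gamma)$ into its opposite $(\alpha,-\beta,\gamma)$, and by Gauss's composition theory the opposite form represents the inverse class (its composition with $Q$ is principal). Hence $Q_2((x,y)B)$ is $\SL_2(\BZ)$-equivalent to the opposite of $Q_2$, whose class is $-[Q_2]$, which is exactly (v).

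Part (i) is pure reduction theory. The principal form represents $1$ at $(1,0)$ and $\SL_2(\BZ)$-equivalence preserves primitively represented values, so $[Q]=[Q_{0,D}]$ forces $Q$ to represent $1$. Conversely a primitive form representing $1$ represents it properly, hence is $\SL_2(\BZ)$-equivalent to some $(1,b,c)$; reducing $b$ modulo $2$ by $x\mapsto x+ty$ and using $b^2-4c=D$ identifies it with $Q_{0,D}$. The same argument with leading coefficient $-1$ yields the statement for $-1$ and $-Q_{0,D}$.

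The crux is (iii), which I rewrite as $[-Q]+[Q]=[-Q_{0,D}]$. By (v), $-[Q]$ is the class of the opposite form $(\alpha,-\beta,\gamma)$, which shares its middle coefficient with $-Q=(-\alpha,-\beta,-\gamma)$. I would put $(\alpha,-\beta,\gamma)$ and $-Q_{0,D}$ into concordant representatives $(\alpha,B,\cdot)$ and $(-1,B,\cdot)$ and apply the composition rule for concordant forms $(a_1,B,a_2C)\ast(a_2,B,a_1C)=(a_1a_2,B,C)$ with $a_1=\alpha$ and $a_2=-1$; the output has leading coefficient $-\alpha$, middle coefficient $\equiv-\beta\pmod{2\alpha}$, and, by the discriminant, third coefficient $-\gamma$, hence is $\SL_2(\BZ)$-equivalent to $-Q$. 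This composition bookkeeping -- making the two forms concordant and tracking the middle coefficient, handled separately for $4\mid D$ and $4\nmid D$ -- is the main obstacle, though it is entirely elementary.

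Finally, (ii) and (iv) follow formally. Applying (i) to $-Q_{0,D}$ gives $[-Q_{0,D}]=[Q_{0,D}]$ iff $-Q_{0,D}$ represents $1$, i.e. iff $Q_{0,D}$ represents $-1$; completing the square ($4Q_{0,D}=(2x)^2-Dy^2$ for $4\mid D$ and $(2x+y)^2-Dy^2$ for $4\nmid D$) shows this is equivalent to the solvability of $x^2-Dy^2=-4$, which is (ii). For (iv), substituting (iii) into $[-Q]=[Q]$ yields $2[Q]=[-Q_{0,D}]$, and into $[-Q]=-[Q]$ yields $[-Q_{0,D}]=[Q_{0,D}]$; both implications reverse, so $[-Q]=\pm[Q]$ holds precisely when $[Q_{0,D}]=[-Q_{0,D}]$ or $2[Q]=[-Q_{0,D}]$.
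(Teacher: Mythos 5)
Your proposal is correct and follows essentially the same route as the paper's appendix: reduction theory for (i), the opposite-form identity $-[\alpha,\beta,\gamma]=[\alpha,-\beta,\gamma]$ for (v), Dirichlet composition of concordant forms for (iii) (you compose the opposite of $Q$ with $-Q_{0,D}$ to land on $-Q$, while the paper composes $-Q$ with $Q$ to land on a form representing $-1$ --- the same computation rearranged), and formal deductions for (ii) and (iv). No substantive difference.
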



Lemma \ref{rel-in C(D)} will be used repeatedly below.
Its proof is given in the appendix. 




\begin{thm}\label{main1}
	Let $M=M_\phi$ be an oriented Sol torus bundle with $D=D_\phi$ defined in (3.1) and  $Q_\phi$ defined in (3.2). The following are equivalent.
	\begin{enumerate}
		\item the Sol 3-manifold $M$ is achiral.
		\item either $[Q_{0, D}]=[-Q_{0, D}]$ or $2[Q_\phi]=[-Q_{0, D}]$ (or quivalently $[-Q_\phi]=\pm [Q_\phi]$).
\end{enumerate}
\end{thm}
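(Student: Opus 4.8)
Since the parenthetical clause in $(2)$ is exactly Lemma \ref{rel-in C(D)}(iv), it is enough to prove the equivalence of $(1)$ with $[-Q_\phi]=\pm[Q_\phi]$ in $C(D)$. By Lemma \ref{pmA}, $M_\phi$ is achiral precisely when some $A\in\GL_2(\BZ)$ satisfies either (i) $\phi A=A\phi$ with $\det A=-1$, or (ii) $\phi A=A\phi^{-1}$ with $\det A=1$. My plan is to identify case (ii) with $[-Q_\phi]=[Q_\phi]$ and case (i) with $[-Q_\phi]=-[Q_\phi]$; since an achiral $M_\phi$ realizes (i) or (ii), the two together give exactly $[-Q_\phi]=\pm[Q_\phi]$. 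Throughout I will use $Q_{\phi^{-1}}=-Q_\phi$ (Lemma \ref{Q and -Q}(ii)), the transformation rule (3.3), and the characterization of $-[Q]$ by determinant $-1$ substitutions (Lemma \ref{rel-in C(D)}(v)).

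For case (ii) I rewrite $\phi A=A\phi^{-1}$ as $A\phi^{-1}A^{-1}=\phi$ with $A\in\SL_2(\BZ)$. Applying (3.3) to $\phi^{-1}$ and using $Q_{\phi^{-1}}=-Q_\phi$ gives $Q_\phi(x,y)=-Q_\phi((x,y)(A^t)^{-1})$ with $(A^t)^{-1}\in\SL_2(\BZ)$; thus $Q_\phi$ and $-Q_\phi$ are properly equivalent, i.e. $[-Q_\phi]=[Q_\phi]$. Conversely, if $[-Q_\phi]=[Q_\phi]$ then $Q_\phi$ is properly equivalent to $Q_{\phi^{-1}}=-Q_\phi$; feeding this equivalence back through (3.3) produces $A'\in\SL_2(\BZ)$ with $Q_{A'\phi A'^{-1}}=Q_{\phi^{-1}}$. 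Since $A'\phi A'^{-1}$ and $\phi^{-1}$ have the same trace $\tr\phi$ and the same form, and since a hyperbolic matrix in $\SL_2(\BZ)$ is recovered from its trace together with its form via (3.2) (here $(\tr\phi)^2-4=u^2D$ fixes $u$), I conclude $A'\phi A'^{-1}=\phi^{-1}$, which is case (ii).

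For case (i) the cleanest route is to recognize the ring of integral matrices commuting with $\phi$ as the quadratic order $\CO$ of discriminant $D$: with $\omega=(\phi-aI)/u$ one has $\tr\omega=(d-a)/u$ and $\det\omega=-bc/u^2$, so $\BZ[\omega]$ has discriminant $(\tr\omega)^2-4\det\omega=((a+d)^2-4)/u^2=D$, and on $\CO\cong\BZ[\omega]$ the determinant coincides with the field norm $N_{K/\BQ}$. Hence an $A$ as in (i) exists if and only if $\CO$ carries a unit of norm $-1$, i.e. the negative Pell equation $x^2-Dy^2=-4$ has an integral solution; by Lemma \ref{rel-in C(D)}(ii) this is $[Q_{0,D}]=[-Q_{0,D}]$, which by Lemma \ref{rel-in C(D)}(iii) is the same as $[-Q_\phi]=-[Q_\phi]$. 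Equivalently, and in parallel with case (ii): a commuting $A$ with $\det A=-1$ turns the conjugation identity into $Q_\phi((x,y)(A^t)^{-1})=-Q_\phi(x,y)$ with $(A^t)^{-1}$ of determinant $-1$, so Lemma \ref{rel-in C(D)}(v) yields $[-Q_\phi]=-[Q_\phi]$ directly. Combining cases (i) and (ii) with Lemma \ref{rel-in C(D)}(iv) then finishes the proof.

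The most delicate step is case (i): converting the purely topological statement---existence of a fiber-preserving orientation-reversing self-homeomorphism acting on the fiber by a determinant $-1$ matrix commuting with $\phi$---into the arithmetic solvability of $x^2-Dy^2=-4$. This requires pinning the centralizer of $\phi$ down to the order $\CO$ of discriminant $D$ and matching $\det$ with the norm; it is this identification that will later serve as the bridge through which Gauss' genus theory controls which discriminants $D$ are achiral. A pervasive secondary subtlety is sign bookkeeping: one must never conflate the opposite form $-Q$ with the inverse class $-[Q]$, the two being linked only through the identity $[-Q]=-[Q]+[-Q_{0,D}]$ of Lemma \ref{rel-in C(D)}(iii).
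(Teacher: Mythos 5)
Your proof is correct, and the overall skeleton (reduce via Lemma \ref{pmA} to the two cases, identify case (i) with $[Q_{0,D}]=[-Q_{0,D}]$ and case (ii) with $[-Q_\phi]=[Q_\phi]$, then invoke Lemma \ref{rel-in C(D)}(iv)) matches the paper's. Where you genuinely diverge is in the two substantive steps. For the converse of case (ii) the paper proves a separate Lemma \ref{**}: it identifies $\phi^t,(\phi^{-1})^t$ as elements of $\mathrm{Aut}^+(Q)\cong\BZ\times\{\pm1\}$ (citing \cite[Cor.~5.9]{Fl}), writes both as $\pm\alpha^{m},\pm\alpha^{n}$, and matches exponents using eigenvalues; you instead upgrade the class equality $[Q_\phi]=[Q_{\phi^{-1}}]$ to a form equality $Q_{A'\phi A'^{-1}}=Q_{\phi^{-1}}$ via (3.3) and then use the fact that a hyperbolic element of $\SL_2(\BZ)$ is recovered from its trace together with $Q_\phi$ (since $u$ is pinned down by $t^2-4=u^2D$ and $\sign u=\sign t$). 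This is shorter, avoids the external input on $\mathrm{Aut}^+(Q)$, and is in fact the same determinacy principle the paper itself uses later in Lemma \ref{Q+tr}, so it is fully consistent with the paper's toolkit. For case (i) the paper writes $A$ out explicitly and asserts the equivalence with $Q_{0,D}$ representing $-1$ as "easy to see"; your identification of the integral centralizer of $\phi$ with the order $\BZ[\omega]$ of discriminant $D$, with $\det$ equal to the norm form $x^2+\beta xy+\alpha\gamma y^2$ (primitive, representing $1$, hence principal), supplies the missing computation cleanly and is the conceptual bridge the paper leaves implicit. One small point in your favor worth flagging: your "parallel" derivation of case (i) uses $Q_\phi((x,y)(A^t)^{-1})=-Q_\phi(x,y)$ for a commuting $A$ with $\det A=-1$, which is the \emph{correct} transformation law; the paper's (3.3) as literally stated omits the factor $\det A$ and is only valid for $A\in\SL_2(\BZ)$ (compare the paper's own (4.4+) for $A=w$), so your sign bookkeeping here is more careful than the source.
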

We first to prove the following 
\begin{lem}\label{**}
If $[Q_{\phi_1}]=[Q_{\phi_2}]$ for two anosov maps $\phi_1, \phi_2\in SL_2(\BZ)$, then there exists $\alpha\in SL_2(\BZ)$ and integers 
$m, n>0$ such that
$\phi_1$ is $SL_2(\BZ)$-conjugates $\pm \alpha^{m}$ and $\phi_2$ is $SL_2(\BZ)$-conjugates $\pm \alpha^{n}$. 
\end{lem}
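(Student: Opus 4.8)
The plan is to first replace $\phi_2$ by a well-chosen $\SL_2(\BZ)$-conjugate so that the two quadratic forms become literally equal, and then to recognize that the matrices fixing a common quadratic point form the norm-one units of an order in $\BQ(\sqrt D)$, whose structure is pinned down by Dirichlet's theorem.

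First I would exploit the transformation rule (3.3). Since $[Q_{\phi_1}]=[Q_{\phi_2}]$ in $C(D)$, there is $A\in\SL_2(\BZ)$ with $Q_{\phi_1}(x,y)=Q_{\phi_2}((x,y)A)$. Setting $B=(A^{t})^{-1}\in\SL_2(\BZ)$, formula (3.3) gives $Q_{B\phi_2 B^{-1}}=Q_{\phi_1}$ as forms. Because the conclusion of the lemma is stated up to $\SL_2(\BZ)$-conjugacy of $\phi_2$, I may replace $\phi_2$ by $B\phi_2 B^{-1}$ and assume henceforth that $Q_{\phi_1}=Q_{\phi_2}=(\alpha,\beta,\gamma)$ as primitive forms of discriminant $D$.

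Next, equality of the forms means $\phi_1$ and $\phi_2$ share the same fixed points $z,\bar z$, namely the roots of $\alpha x^2+\beta x+\gamma$ (the remark after (3.2)). Consider the stabilizer $\Gamma_z=\{\phi\in\SL_2(\BZ):\phi z=z\}$, so that $\phi_1,\phi_2\in\Gamma_z$. A matrix in $M_2(\BQ)$ fixes $z$ precisely when $(z,1)^{t}$ is an eigenvector, and the set of all such matrices is the field $F\cong\BQ(\sqrt D)$ generated by any one non-scalar such matrix; hence $R:=F\cap M_2(\BZ)$ is an order in $\BQ(\sqrt D)$. Under $F\cong\BQ(\sqrt D)$ the determinant corresponds to the norm, so $\Gamma_z=\{u\in R^{\times}:N(u)=1\}$. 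By the Dirichlet Unit Theorem the norm-one units of a real quadratic order form a group isomorphic to $\{\pm1\}\times\BZ$; let $\alpha\in\SL_2(\BZ)$ be the matrix of a generator of the free part. Then $\alpha$ is Anosov, since its eigenvalues are a fundamental norm-one unit $>1$ and its inverse, whence $|\tr\alpha|>2$, and $\Gamma_z=\{\pm\alpha^{k}:k\in\BZ\}$ with this single common generator $\alpha$.

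Finally I would pin down the signs of the exponents. Writing $\phi_1=\pm\alpha^{m'}$ and $\phi_2=\pm\alpha^{n'}$ with $m',n'\neq 0$ (the $\phi_i$ are Anosov, hence $\neq\pm I$), combining the parts of Lemma \ref{Q and -Q} yields $Q_{\pm\alpha^{k}}=\sign(k)\,Q_\alpha$ for $k\neq0$, so $Q_{\phi_1}=Q_{\phi_2}$ forces $\sign(m')=\sign(n')$. Replacing $\alpha$ by $\alpha^{-1}$ if both exponents are negative, I may take $m=|m'|>0$ and $n=|n'|>0$, giving $\phi_1=\pm\alpha^{m}$ and $\phi_2=\pm\alpha^{n}$ (the latter up to the conjugation performed in the first step), as desired. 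The main obstacle is the third paragraph: correctly identifying $\Gamma_z$ with the norm-one unit group of an order in $\BQ(\sqrt D)$ and invoking Dirichlet to obtain the $\{\pm1\}\times\BZ$ structure with a common generator $\alpha$; once this is in place, the sign bookkeeping is a short application of Lemma \ref{Q and -Q}.
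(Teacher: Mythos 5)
Your proof is correct, and its skeleton coincides with the paper's: first conjugate $\phi_2$ so that the two forms become literally equal (via (3.3)), then show the relevant group is infinite cyclic modulo $\pm I$, then do the sign bookkeeping with Lemma \ref{Q and -Q}. The one substantive difference lies in the middle step. The paper observes that $Q_{\phi_1}=Q_{\phi_2}$ forces $\phi_1^t,\phi_2^t\in \mathrm{Aut}^+(Q)$ and then cites Flath's Corollary 5.9 for the structure $\mathrm{Aut}^+(Q)=\{\pm\alpha^n\}$, whereas you work with the stabilizer $\Gamma_z$ of the common fixed point, identify $F=\{M\in M_2(\BQ): M(z,1)^t\in\BQ(\sqrt D)\cdot(z,1)^t\}$ with $\BQ(\sqrt D)$, set $R=F\cap M_2(\BZ)$, and deduce $\Gamma_z=\{u\in R^\times: N(u)=1\}\cong\{\pm1\}\times\BZ$ from the Dirichlet unit theorem (already quoted in Section 2 of the paper). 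Since the automorphism group of a form is precisely the stabilizer of its root pair, and Flath's result is itself proved via units of the quadratic order, you are in effect reproving the cited black box inline; this costs a few lines (one should check that $R$ really has rank $2$, that the norm-one subgroup of $R^\times$ still has the shape $\{\pm1\}\times\BZ$, and that the generator $\alpha$ is Anosov, all of which you do or can do easily) but buys a self-contained argument that ties the lemma directly to the unit-group material the paper already develops for Proposition \ref{Class2}. Your final sign argument, $Q_{\pm\alpha^k}=\sign(k)Q_\alpha$ forcing $\sign(m')=\sign(n')$, is exactly the paper's.
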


\begin{proof} We first claim:
If $Q_{\phi_1}=Q_{\phi_2}$ for two anosov maps $\phi_1, \phi_2\in SL_2(\BZ)$, then there exists $\alpha\in SL_2(\BZ)$  and integers 
$m, n>0$ such that $\phi^t_1=\pm \alpha^{m}, \, \phi^t_2=\pm \alpha^{n}$.

For each primitive binary quadratic form $Q$, define 
$$Aut^+(Q)=\{\beta\in SL_2(\BZ) | Q(x,y)=Q((x,y)\beta)\}$$

Suppose  first $Q_{\phi_1}=Q_{\phi_2}=Q$, then it is a direct verification that $\phi_1^t, \phi_2^t \in Aut^+(Q)$.
By  \cite[Corollary 5.9]{Fl}, which claims that
$Aut^+(Q)=\BZ\times\{\pm 1 \}=\{\pm \alpha^n\},$
, we have 
$$\phi^t_1=\pm \alpha^{m}, \, \phi^t_2=\pm \alpha^{n}$$  for integers $m, n\ne 0$. 
Now $m$ and $n$ must have the same sign, otherwise may assume that $m>0$ and $n<0$,   we have 
$$Q=Q_{\phi_1}=Q_{\pm (\alpha^{t})^m}=Q_{(\alpha^{t})^m}=Q_{\alpha^{t}}=-Q_{ (\alpha^t)^{-1}}=-Q_{ (\alpha^t)^{(-1)(-n)}}=-Q_{ (\alpha^t)^{n}}=-Q_{ \phi_2}=-Q,$$
 where the 3rd, 4th, 5th, and 6th equalities follow from  Lemma \ref{Q and -Q} (i), (iii) (ii) and (iii)  respectively.
and we reach  a contradiction. We may assume that $m, n>0$, otherwise replace $\phi$ by $\phi^{-1}$. So the claim is verified.

Now suppose $[Q_{\phi_1}]=[Q_{\phi_2}]$. From the definition, we have  $Q_{\phi_1}(x, y)=Q_{\phi_2}((x,y) \beta^t)$ for 
some $\beta\in SL_2(\BZ)$. By (3.3) we have $Q_{\phi_1}((x,y) \beta^t)= Q_{\beta^{-1}\phi_2\beta}(x.y)$. So we
have $Q_{\phi_1}=Q_{\beta^{-1}\phi_2\beta}$. Now apply the claim to $Q_{\phi_1}=Q_{\beta^{-1}\phi_2\beta}$, we proved  Lemma \ref{**}.
\end{proof} 

\begin{proof}[Proof of Theorem \ref{main1}] Assume that $M_\phi$ is achiral. With notations in Lemma \ref{pmA},  let 
$$A=\matrixx{x}{z}{y}{w}, \phi=\matrixx{a}{b}{c}{d}.$$  The conditions (i) and (ii) is equivalent to that
$$A=\begin{cases} \matrixx{x}{\frac{by}{c}}{y}{\frac{(d-a)y+cx}{c}}, \quad \det  A=-1, \qquad &\text{for (i)}\\
	\matrixx{x}{\frac{(d-a)x-by}{c}}{y}{-x}, \quad  \det A=1 \qquad &\text{for  (ii)}.
\end{cases}
$$
It is easy to see that the condition (i) is equivalent to that $Q_{0, D}$ represents $-1$, therefore equivalent to
$[Q_{0, D}]=[-Q_{0, D}]$ by Lemma \ref{rel-in C(D)} (i).

Below we prove that  Lemma \ref{pmA} (ii) is equivalent to that $[Q_\phi]=[-Q_\phi]$, that is, 
$[Q_\phi]=[-Q_\phi]$ if and only if $\phi=\beta \phi^{-1} \beta ^{-1}$ for some $\beta\in SL_2(\BZ)$.

The "if" part follows from the definition and formula (3.3).

Now suppose  $[Q_\phi]=[-Q_\phi]$. Then  $[Q_\phi]=[Q_{\phi^{-1}}]$ by Lemma \ref{Q and -Q}.
Then by Lemma \ref{**}, there exist $\alpha\in SL_2(\BZ)$ and integers $m, n>0$ such that 
$\phi$ is $SL_2(\BZ)$-conjugates $\pm \alpha^{m }$ and $\phi^{-1}$ is $SL_2(\BZ)$-conjugates $\pm \alpha^{n}$.
Since $\phi$ and $\phi^{-1}$ have the same eigenvaluse, it concludes that $m=n$ and the sign before $\alpha$ are the same.
So $\phi=\beta \phi^{-1} \beta ^{-1}$ for some $\beta\in SL_2(\BZ)$.
\end{proof}

We now recall Gauss' genus theory (see \cite{Gauss}, \cite[Chapter 5]{Fl} and \cite{BS}).  Let $D\equiv 0, 1\mod 4$ be a non-square positive integer, 
the Kronecker symbol $\chi_D: (\BZ/D\BZ)^\times \ra \{\pm 1\}$ is a homomorphism  such that $\chi_D(p\mod D)=\left(\frac{D}{p}\right)$ for all primes $p\nmid 2D$, where $\left(\frac{D}{p}\right)$, the Legendre symbol,  is 1 if and only if $D$ is quadratic residue modulo $p$.  Let $H_D\subset \Ker \chi_D$ be the subgroup consisting of values represented by the principal form of discriminant $D$.  Then there is an exact sequence 
$$0\ra 2C(D)\lra C(D)\stackrel{\omega}{\lra} \Ker \chi_D/H_D\lra 0,\qquad (3.6)$$
where $\omega$ sends a class to the coset of $H_D$ in $\Ker \chi_D$ it represents. 


The group $H_D$ consists of elements $[a]\in (\BZ/D\BZ)^\times$ such that 

$$\left(\frac{a}{p}\right)=1\, \, \text{for all odd prime divisors $p$ of $D$} \qquad (3.7)$$
 and such that
$$a\equiv \begin{cases}
 1, 7\mod 8, \qquad &\text{if $D\equiv 8\mod 32$},\\
 1\mod 4, \qquad &\text{if $D\equiv 12, 16, 28\mod 32$}, \,\,\, \qquad (3.8)\\ 
 1, 3 \mod 8, &\text{if $D\equiv 24\mod 32$},\\
 1\mod 8, &\text{if $32|D$}. 
 \end{cases}$$
 


\begin{thm}\label{Gauss} Let $D\equiv 0, 1\mod 4$ be a positive non-square integer. Then the following are equivalent:
\begin{enumerate}

\item there exists an achiral  oriented Sol torus bundle $M$ of discriminant $D$,

\item  $16\nmid D$ and no prime factor of $D$ is $\equiv 3\mod 4$. 
\end{enumerate}	
\end{thm}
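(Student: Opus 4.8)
The plan is to convert the statement ``there exists an achiral oriented Sol torus bundle of discriminant $D$'' into a single membership statement in the class group $C(D)$, and then extract condition (2) from Gauss' genus theory. First I would record the reduction. By Theorem~\ref{main1}, an oriented bundle $M_\phi$ with $D_\phi=D$ is achiral exactly when $[Q_{0,D}]=[-Q_{0,D}]$ or $2[Q_\phi]=[-Q_{0,D}]$ in $C(D)$. The first alternative is the statement $[-Q_{0,D}]=0$, which lies in $2C(D)$ trivially; the second says $[-Q_{0,D}]\in 2C(D)$, with $[Q_\phi]$ a witnessing square root. Thus an achiral $M$ of discriminant $D$ exists if and only if $[-Q_{0,D}]\in 2C(D)$ \emph{and} a witnessing class is realized as some $[Q_\phi]$. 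The only nontrivial point is realizability, so I would prove that $\{[Q_\phi]:\phi\ \text{Anosov},\ D_\phi=D\}$ exhausts $C(D)$.

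\emph{Realizing every class.} Given a primitive form $Q=(\alpha,\beta,\gamma)$ of discriminant $D$, let $(t,u)$ with $t,u>0$ solve the Pell equation $t^2-Du^2=4$ and set
\[
g=\matrixx{\tfrac{t-\beta u}{2}}{-\gamma u}{\alpha u}{\tfrac{t+\beta u}{2}}.
\]
A short computation, using $t\equiv\beta u\mod 2$ and $D=\beta^2-4\alpha\gamma$, shows $g\in\SL_2(\BZ)$ with $\det g=1$ and $\tr g=t>2$, so $g$ is Anosov; it is the fundamental proper automorph of $Q$ from \cite[Corollary~5.9]{Fl}. Its Möbius fixed points are the roots of $\alpha z^2+\beta z+\gamma$, so by the remark after (3.2) and the formula $Q_\phi=\tfrac1u(cx^2+(d-a)xy-by^2)$ one computes $Q_g=\sign(t)\,Q=Q$. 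As $[Q]$ runs over $C(D)$ this shows $\{[Q_\phi]\}=C(D)$, so the existence of an achiral $M$ of discriminant $D$ becomes equivalent to $[-Q_{0,D}]\in 2C(D)$.

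\emph{Genus theory.} Next I would feed this into the exact sequence (3.6). Since $-Q_{0,D}$ represents $-1$ at $(x,y)=(1,0)$, its image under $\omega$ is the coset $(-1)H_D\in\Ker\chi_D/H_D$; because $\ker\omega=2C(D)$, we obtain $[-Q_{0,D}]\in 2C(D)\iff -1\in H_D$. It remains to decide when $-1$ meets the defining conditions (3.7)--(3.8) of $H_D$. Condition (3.7) for $a=-1$ is $\sfrac{-1}{p}=1$ for every odd prime $p\mid D$, i.e. every such $p$ is $\equiv 1\mod 4$, which is precisely ``no prime factor of $D$ is $\equiv 3\mod 4$''.

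\emph{The $2$-adic case analysis.} Finally I would dispose of (3.8) by cases on $D\mod 32$, using $-1\equiv 7\mod 8$. For $D$ odd and for $D\equiv 4,20\mod 32$ condition (3.8) imposes nothing, hence holds automatically; for $D\equiv 8\mod 32$ it requires $a\equiv 1,7\mod 8$, satisfied since $-1\equiv 7$. In the remaining residues $D\equiv 12,16,24,28\mod 32$ and $32\mid D$, condition (3.8) fails for $-1$; but there condition (2) fails on its own, since $D\equiv 16\mod 32$ and $32\mid D$ force $16\mid D$, while $D\equiv 12,28\mod 32$ force $D/4\equiv 3\mod 4$ and $D\equiv 24\mod 32$ forces $D/8\equiv 3\mod 4$, each producing a prime factor $\equiv 3\mod 4$. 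Combining (3.7) with this analysis yields $-1\in H_D\iff 16\nmid D$ and no prime factor of $D$ is $\equiv 3\mod 4$, which is (2). I expect this last step --- matching the $2$-adic genus conditions (3.8) against ``$16\nmid D$'' and ``no prime $\equiv 3\mod 4$'' --- to be the main obstacle, since it is where the $2$-part of $D$ and the congruence $-1\equiv 7\mod 8$ must be reconciled; the realization of an arbitrary form class via the automorph $g$ is the other place where care is needed.
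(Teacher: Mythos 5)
Your proof is correct and follows essentially the same route as the paper: reduce achirality to the single condition $[-Q_{0,D}]\in 2C(D)$ via Theorem \ref{main1}, translate this to $-1\in H_D$ using the genus exact sequence (3.6), and then match the local conditions (3.7)--(3.8) at $a=-1$ against condition (2) by a case analysis on $D\bmod 32$. The only point where you go beyond the paper is your explicit realization of every class of $C(D)$ as $[Q_\phi]$ for the fundamental automorph $\phi=g$; the paper simply asserts the existence of $M_\phi$ with $Q_\phi=Q$, so this is a welcome filling-in of a detail rather than a different approach.
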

\begin{proof}  
We first verity that (1) is equivalent to the following 

(3)  there exists a $[Q]\in C(D)$ such that $2[Q]=[-Q_{0, D}]$ (or equivalently $2[Q]$ represents $-1$). 

The equivalence of (1) and (3) follows from Theorem \ref{main1}:
If there exists a $[Q]\in C(D)$ such that $2[Q]=[-Q_{0, D}]$, then the Sol 3-manifold $M_\phi$ with $Q_\phi=Q$ is achiral by Theorem \ref{main1}.
On the other hand if $M_\phi$ with $D_\phi=D$ is achiral, then either $[Q_{0, D}]=[-Q_{0, D}]$ or $2[Q_\phi]=[-Q_{0, D}]$ by Theorem \ref{main1}.
 Note if $[Q_{0, D}]=[-Q_{0, D}]$, choose $\phi_0$ such that $Q_{\phi_0}=Q_{0, D}$.
Then $2[Q_{\phi_0}]=2[Q_{0, D}]=[Q_{0, D}]=[-Q_{0, D}]$.

Below we will prove that (2) and (3) are equivalent  in two steps by using Gauss' genus theory.

{\bf Step 1}: To verify that (3)  $2[Q]=[-Q_{0, D}]$ ($2[Q]$ represents $-1$) is equivalent to $[-1]\in H_D$ by using the short exact sequence (3.6):

Suppose $-1$ is represented by $2[Q]$. Since $2[Q]\in 2C (D)$, we have $\omega(2[Q])=\bar 1\in \Ker \chi_D/H_D$, which implies
$[-1]\in H_D$.

Suppose $[-1]\in H_D$. We only need to prove $\omega([-Q_{0, D}])==\bar 1\in \Ker \chi_D/H_D$, therefore $[-Q_{0, D}]=2[Q]$
for some $[Q]\in  C(D)$. If not, we have $\omega([-Q_{0, D}])=gH_D\in \Ker \chi_D/H_D$, where $g\notin H_D$. Since $[-1]\in H_D$,
$[-1]\notin gH_D$, which implies $-Q_{0, D}$ does not represent any $a\equiv 
 -1\mod D$, contradicting that $-Q_{0, D}$ represents $-1$.
 
{\bf Step 2}: To verify $[-1]\in H$ and (2) are equivalent by using (3.7) and (3.8).

For any odd prime $p|D$, by Euler's Theorem, $(\frac {-1}p)=1$ if and only if $p$ is not $ \equiv 3\mod 4$.

Suppose $[-1]\in H_D$. 
By Euler's Theorem and (3.6) (a characterization of $H_D$), we have  that  no prime factor of $D$ is $\equiv 3\mod 4$.  
Since $-1$ is not $ \equiv 1\mod 8$, by the fourth line of (3.7) (another  characterization of $H_D$), we have $D$ is not a multiple of 32.
Since $-1$ is not $ \equiv 1\mod 4$, by the second line of (3.7), we have $D$ is not congruent to 16 mod 32.
Hence $D$ is not a multiple of 16.

Now suppose $16\nmid D$ and no prime factor of $D$ is $\equiv 3\mod 4$.  
Then no prime factor of $D$ is $\equiv 3\mod 4$ implies that for any odd prime $p|D$,  $\left(\frac{-1}{p}\right)=1$ \cite[Page 71]{Fl}. Moreover suppose
$$D=2^\alpha\prod_{i=1}^k (4n_i+1)^{\beta_i}$$
is  the prime decomposition of $D$. Then $D=2^\alpha (4n+1)$ for some $n\in \BZ$.
Since $16\nmid D$, we have $\alpha \le 3$.
We have $4\nmid D$ if $\alpha \le 1$, and  $D\equiv 4\mod 16$ if $\alpha =2$, and $D\equiv 8\mod 32$ if $\alpha =3$.
Overall we have that $D$ is not $0, 16, 12, 24, 28,   \mod 32$.
Then by $-1$ appears in (3.8) for other $D$. And by the  charaterization of $H_D$, $[-1]\in H_D$.
\end{proof}


\begin{thm}\label{Gauss2} A commensurable class $\CM$ of Sol 3-manifolds is achiral if and only if $D_{\CM}$ 
contains no prime $\equiv 3\mod 4$.
\end{thm}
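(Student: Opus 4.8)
The plan is to reduce the class-level statement to the bundle-level statement already proved in Theorem~\ref{Gauss}, and then to translate the numerical condition on an order discriminant $D_M=c^2D_\CM$ into the corresponding condition on the fundamental discriminant $D_\CM$. The governing principle is that achirality is a commensurability invariant, so it can be detected on a single conveniently chosen torus bundle.

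First I would argue that achirality of $\CM$ is witnessed by an oriented torus bundle. By Proposition~\ref{Class2} the class $\CM$ contains an orientable torus bundle $M_\phi$, and by Lemma~\ref{virtual} the class $\CM$ is achiral if and only if $M_\phi$ is virtually achiral, i.e.\ admits an achiral finite cover $\widetilde M$. Since $\pi_1(M_\phi)$ sits in an extension $1\to\BZ^2\to\pi_1(M_\phi)\to\BZ\to 1$ (the fiber subgroup being normal), every finite-index subgroup is again such an extension of $\BZ$ by $\BZ^2$, so $\widetilde M$ is again an orientable Sol torus bundle $M_\psi$, lying in $\CM$ (cf.\ Lemma~\ref{v} for the fiber-preserving structure). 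Hence $\CM$ is achiral if and only if $\CM$ contains an achiral oriented Sol torus bundle.

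Next I would set up the arithmetic dictionary. Every torus bundle in $\CM$ has real quadratic field $\BQ(\sqrt{D_\CM})$, so its discriminant $D_M=D_\phi$ is a non-square discriminant of the form $c^2D_\CM$ with $c\ge 1$, and conversely Lemma~\ref{realization of D} realizes each admissible discriminant by some $M_\phi$. The prime factors of $c^2D_\CM$ are precisely those of $c$ together with those of $D_\CM$; in particular, for every $c$, the discriminant $c^2D_\CM$ has a prime factor $\equiv 3\mod 4$ if and only if $D_\CM$ does. For necessity, if $\CM$ is achiral then by the previous step some achiral oriented $M_\psi\in\CM$ has $D_{M_\psi}=c^2D_\CM$, and Theorem~\ref{Gauss} forces $c^2D_\CM$ to have no prime factor $\equiv 3\mod 4$; hence neither does $D_\CM$. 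For sufficiency I would take $c=1$: assuming $D_\CM$ has no prime factor $\equiv 3\mod 4$, I realize an oriented bundle $M$ with $D_M=D_\CM$ and verify both clauses of Theorem~\ref{Gauss}(2). The prime-factor clause is the hypothesis, and $16\nmid D_\CM$ holds because $D_\CM=D_K$ is a fundamental discriminant: writing $K=\BQ(\sqrt d)$ with $d$ squarefree, either $D_K=d\equiv 1\mod 4$ is odd, or $D_K=4d$ with $d\equiv 2\mod 4$ so $D_K\equiv 8\mod 16$, or $D_K=4d$ with $d\equiv 3\mod 4$ so $D_K\equiv 12\mod 16$, and in no case is $16\mid D_K$. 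Theorem~\ref{Gauss} then yields an achiral oriented torus bundle of discriminant $D_\CM$, which lies in $\CM$ by Proposition~\ref{Class2}, so $\CM$ is achiral.

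The only genuine friction is the side condition $16\nmid D$ in Theorem~\ref{Gauss}, which has no analogue in the class statement, so the step I expect to be most delicate is the congruence bookkeeping showing this condition is automatic on fundamental discriminants. A convenient consistency check is that the one case threatening to clash, $d\equiv 3\mod 4$ with $D_K=4d$, never actually arises under the hypothesis: an odd squarefree $d\equiv 3\mod 4$ necessarily carries a prime factor $\equiv 3\mod 4$, which would be a prime factor of $D_\CM=4d$, contradicting that $D_\CM$ has none. With the $16$-divisibility handled this way, the choice $c=1$ is unobstructed and the two directions close the equivalence.
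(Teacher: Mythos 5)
Your proof is correct and follows essentially the same route as the paper's: reduce to an achiral oriented torus bundle in $\CM$, apply Theorem \ref{Gauss} to $D_M=c^2D_\CM$ for necessity and with $c=1$ for sufficiency, and note that $16\nmid D_\CM$ because $D_\CM$ is a fundamental discriminant. The only (harmless) divergence is in the reduction step: the paper handles an achiral torus semi-bundle by lifting to its characteristic torus bundle double cover (Lemma \ref{semi-torus bundle}), whereas you use Lemma \ref{virtual} together with the observation that finite covers of an orientable torus bundle are again orientable torus bundles; both arguments are valid.
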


\begin{proof} Suppose $\CM$ is achiral, then there is achiral Sol 3-manifold $M\in \CM$. If $M$ is a semi-bundle. Let $\tilde M$ be the torus bundle double cover of $M$. Then $\tilde M\in \mathcal{M}$. Since $M$ is achiral and $\tilde M\rightarrow M$ is a characteristic  cover (Lemma \ref{semi-torus bundle}),  $\tilde M$ is also achiral. 
 So way assume that  $M$ is a torus bundle.
Then $D_{M}$ contains no prime $\equiv 3\mod 4$ by Theorem \ref{Gauss}.  
Since $D_{M}=c^2D_{\CM}$ $(c\in \BZ)$, $D_{\CM}$ contains no prime $\equiv 3\mod 4$. 

From the definition of $D_{\CM}$ as fundamental discriminant   we know that $16\nmid D_{\CM}$.
If $D_{\CM}$ contains no prime $\equiv 3\mod 4$, then there is an achiral Sol 3-manifold $M\in \CM$ with $D_{M}=D_{\CM}$ by 
Theorem
\ref{Gauss}. So $\CM$ is achiral.
\end{proof}

\begin{cor}\label{Gauss3}  
Each commensurable class of Sol 3-manifolds contains non-achiral manifolds.
\end{cor}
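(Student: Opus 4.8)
The plan is to produce, inside an arbitrary commensurable class $\CM$, a single torus bundle whose discriminant has a shape that Theorem \ref{Gauss} explicitly forbids from carrying any achiral structure. Since commensurability of Sol torus bundles depends only on the underlying real quadratic field (Proposition \ref{Class2}), I am free to pass to any order discriminant of the form $c^2 D_\CM$ without leaving $\CM$, and I will use this freedom to force a prime factor $\equiv 3 \mod 4$ into the discriminant, thereby ruling out achirality via Theorem \ref{Gauss}.

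Concretely, I first set $D' := 9 D_\CM = 3^2 D_\CM$. I check that $D'$ is an admissible torus-bundle discriminant: it is a positive non-square integer (as $D_\CM$ is, and $3^2$ is a square, so $9 D_\CM$ is a square only if $D_\CM$ is), and $D' \equiv 0,1 \mod 4$ because multiplication by $9$ preserves the residue of $D_\CM$ modulo $4$. Next I apply Lemma \ref{realization of D} to obtain an oriented Sol torus bundle $M_\phi$ with $D_\phi = D'$. Because $\BQ(\sqrt{D'}) = \BQ(\sqrt{D_\CM})$, the bundle $M_\phi$ has fundamental discriminant $D_\CM$, so by Proposition \ref{Class2} it lies in $\CM$. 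Finally, since $3 \mid D'$ and $3 \equiv 3 \mod 4$, condition (2) of Theorem \ref{Gauss} fails for $D'$; by the equivalence in that theorem, condition (1) fails as well, i.e. no achiral oriented Sol torus bundle of discriminant $D'$ exists. In particular $M_\phi$ is non-achiral, and $\CM$ contains the non-achiral manifold $M_\phi$.

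This argument is uniform over all commensurable classes (it does not use whether $\CM$ is achiral), so I expect no genuine obstacle; the only points requiring care are bookkeeping. One must confirm that passing from $D_\CM$ to $9 D_\CM$ keeps the quadratic field, hence the commensurability class, unchanged, and that Theorem \ref{Gauss} is formulated for oriented torus bundles so that the nonexistence of an achiral bundle of discriminant $D'$ yields at once the non-achirality of the orientable manifold $M_\phi$ (achirality being a property of the unoriented manifold). For a class that is itself non-achiral one could alternatively just observe that no element of such a class is achiral, but the construction above already covers every case with a single uniform choice.
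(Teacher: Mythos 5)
Your proposal is correct and is essentially the paper's own argument: the paper likewise realizes a torus bundle in $\CM$ with discriminant $c^2 D_{\CM}$ via Lemma \ref{realization of D} (choosing $c=3$ or $4$) and invokes Theorem \ref{Gauss} to conclude non-achirality. Your choice $c=3$ is one of the two options the paper offers, and your bookkeeping (residue mod $4$, non-squareness, invariance of the quadratic field) matches the intended justification.
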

  
\begin{proof}  For each commensurable class $\CM$ of Sol 3-manifolds and each integer $c\ne 0$, $c^2D_{\CM}$  is a non-square positive  integer $\equiv 0, 1\mod 4$. Then  there is a $M\in \CM$
such that $D_{M}=c^2D_{\CM}$ by Lemma \ref{realization of D}. If we choose $c$ to be either 3 or 4, then $M$ is non-achiral by Theorem \ref{Gauss}. 
\end{proof}

 \section{Density Results}
 \begin{prop}\label{Pell} Let $D\equiv 0, 1\mod 4$ be a positive non-sqaure integer. Then the following are equivalent:
 \begin{enumerate}
 	\item The negative Pell equation $x^2-Dy^2=-4$ has solutions;
 	\item There is a non-oriented Sol 3-manifold with characteristic double cover of discriminant $D$;
 	\item There is an achiral semi-tours bundle with characteristic double cover of discriminant $\lcm(D, 4)$.
 \end{enumerate}
 	\end{prop}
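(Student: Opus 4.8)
The plan is to show that all three conditions are equivalent to the single class-group statement $[-Q_{0,D}]=[Q_{0,D}]$ in $C(D)$, which by Lemma \ref{rel-in C(D)}(ii) is exactly the solvability of $x^2-Dy^2=-4$, i.e. condition (1). Throughout I use two structural facts. First, by Proposition \ref{Class1} the non-orientable Sol $3$-manifolds are precisely the torus bundles $M_\phi$ with $\det\phi=-1$, and the orientation double cover of such an $M_\phi$ is $M_{\phi^2}$ (the cover in the base-circle direction, since $\phi$ reverses the fibre orientation); its monodromy $\phi^2\in\SL_2(\BZ)$ and a short computation gives $D_{\phi^2}=((\tr\phi)^2+4)/\gcd(b,c,d-a)^2$. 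Second, by Lemma \ref{semi-torus bundle} a semi-bundle $N_\psi$, $\psi=\matrixx{a}{b}{c}{d}$, has characteristic double cover $M_\phi$ with $\phi=\matrixx{2bc+1}{2ab}{2cd}{2bc+1}$.

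For (1)$\Leftrightarrow$(2): if a non-orientable $M_\phi$ with $D_{\phi^2}=D$ exists, then $M_{\phi^2}$ carries the free orientation-reversing deck involution of its orientation cover, so $M_{\phi^2}$ is achiral; concretely this is case (i) of Lemma \ref{pmA} realized by $A=\phi$ (which commutes with $\phi^2$ and has $\det=-1$). By the analysis in the proof of Theorem \ref{main1}, case (i) is equivalent to $Q_{0,D}$ representing $-1$, i.e. $[Q_{0,D}]=[-Q_{0,D}]$, hence (1). Conversely, (1) produces a norm $-1$ element of $\CO_D$, so the fundamental unit $\epsilon$ of $\CO_D$ has norm $-1$; letting $\phi$ be the matrix of multiplication by $\epsilon$ on the rank-$2$ lattice $\CO_D$ gives $\det\phi=N(\epsilon)=-1$, an Anosov matrix with $D_{\phi^2}=\disc(\CO_D)=D$, so $M_\phi$ is a non-orientable Sol $3$-manifold whose double cover $M_{\phi^2}$ has discriminant $D$.

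For (1)$\Leftrightarrow$(3): from the explicit $\phi$ above one computes $Q_\phi=\bigl(\tfrac{cd}{g'},\,0,\,-\tfrac{ab}{g'}\bigr)$ with $g'=\gcd(ab,cd)$, a form with vanishing middle coefficient; hence $Q_\phi$ is ambiguous and $2[Q_\phi]=[Q_{0,D'}]$ in $C(D')$, where $D'=D_\phi\equiv0\bmod4$ always (this is the source of the $\lcm(D,4)$). Feeding $2[Q_\phi]=[Q_{0,D'}]$ into Theorem \ref{main1} collapses both alternatives there to the single condition $[Q_{0,D'}]=[-Q_{0,D'}]$, so $M_\phi$ is achiral iff $x^2-D'y^2=-4$ is solvable. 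Since the double cover $M_\phi\to N_\psi$ is characteristic, an orientation-reversing homeomorphism of $N_\psi$ lifts, giving $N_\psi$ achiral $\Rightarrow M_\phi$ achiral $\Rightarrow$ solvability of $x^2-\lcm(D,4)y^2=-4$. The final link is the number-theoretic bridge that $x^2-Dy^2=-4$ and $x^2-\lcm(D,4)y^2=-4$ are simultaneously solvable: this is immediate for $D\equiv0\bmod4$, and for $D\equiv1\bmod4$ (so $\lcm(D,4)=4D$) it reduces to $x^2-Dy^2=-1\Leftrightarrow x^2-Dy^2=-4$, which holds because the conductor-$2$ index $[\CO_D^\times:\BZ[\sqrt D]^\times]\in\{1,3\}$ is odd, so a norm $-1$ unit of $\CO_D$ survives in $\BZ[\sqrt D]=\CO_{4D}$. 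Together these give (3)$\Rightarrow$(1).

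For (1)$\Rightarrow$(3) I would construct the achiral semi-bundle directly: starting from the norm $-1$ unit furnished by (1) (equivalently by the solvability of $x^2-\lcm(D,4)y^2=-4$), build $\psi\in\SL_2(\BZ)$ with $abcd\neq0$, $a=d$, and $D_\phi=\lcm(D,4)$; for such symmetric $\psi$ one has $\diag(1,-1)\,\psi\,\diag(1,-1)=\psi^{-1}$, which by Proposition \ref{Class1}(2) is a self-homeomorphism of $N_\psi$, and the task is to check that it reverses orientation (verified by lifting to $M_\phi$ and computing the sign $\epsilon\det B$ of the induced $B\in\GL_2(\BZ)$, as in Lemma \ref{pmA}). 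I expect this last step --- realizing an \emph{achiral} semi-bundle at the prescribed discriminant $\lcm(D,4)$ and confirming the orientation-reversal of the candidate symmetry --- to be the main obstacle; the class-group bookkeeping (via Theorem \ref{main1}, the ambiguity $2[Q_\phi]=[Q_{0,D'}]$, and Lemma \ref{rel-in C(D)}) and the unit-index argument are routine by comparison.
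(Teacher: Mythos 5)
Your equivalence $(1)\Leftrightarrow(2)$ is correct, and your $(2)\Rightarrow(1)$ argument is genuinely different from (and slicker than) the paper's: you observe that $A=\phi$ itself exhibits case (i) of Lemma \ref{pmA} for the monodromy $\phi^2$, whereas the paper instead uses the conjugation identity $\psi^2=(w\psi)^{-1}w\psi^2w(w\psi)$ to deduce $[Q_{\psi^2}]=[Q_{w\psi^2w}]$ and then extracts $[-Q_{0,D}]=[Q_{0,D}]$ from Lemma \ref{rel-in C(D)}(iii). Your $(3)\Rightarrow(1)$ also matches the paper's route (characteristic lift, the ambiguous form $2[Q_\phi]=0$, Theorem \ref{main1}), and your unit-index argument for passing between $D$ and $\lcm(D,4)$ is actually more careful than the paper's bare assertion that $x^2-Dy^2=-4$ ``always has a solution with $x$ even.''

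The genuine gap is in $(1)\Rightarrow(3)$, and it is exactly where you suspected --- but your candidate construction targets the wrong symmetry. You propose $\psi$ with equal diagonal entries $a=d$, so that $\diag(1,-1)\,\psi\,\diag(1,-1)=\psi^{-1}$. That self-homeomorphism of $N_\psi$ is orientation-\emph{preserving}: for $a=d$ one has $bc=a^2-1$, hence $D_\phi=4bc/\gcd(b,c)^2$ and $(2a)^2-D_\phi\gcd(b,c)^2=+4$, the \emph{positive} Pell equation, which carries no information. The correct condition is $\tr\psi=a+d=0$: the paper invokes \cite[Theorem 1.7]{SWW} (via Lemma \ref{-1}) to the effect that $N_\psi$ is achiral \emph{iff} $a+d=0$, and then the same computation with $d=-a$ (so $a^2+bc=-1$) gives $(2a)^2-D_\phi\gcd(b,c)^2=-4$, tying achirality of $N_\psi$ directly to the negative Pell equation for $D_\phi$. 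Accordingly, for $(1)\Rightarrow(3)$ the paper takes a solution $(2a,b)$ of $x^2-Dy^2=-4$ with $x$ even and sets $\psi=\matrixx{a}{b}{-Db/4}{-a}$ (resp.\ $\matrixx{a}{b/2}{-Db/2}{-a}$ for odd $D$), a trace-zero matrix in $\SL_2(\BZ)$ with $abcd\neq 0$, whence $N_\psi$ is achiral of the required discriminant. Without the mapping-degree input from \cite{SWW} you have no way to certify that any self-homeomorphism of a semi-bundle reverses orientation, so this implication remains unproved in your proposal.
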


\begin{proof} Assume (1), or equivalently, by Lemma \ref{rel-in C(D)} (i) and (ii), the solvability of 
$$u^2-\frac{D}{4}v^2=-1 \quad (4|D), \qquad u^2-Dv^2=-1\quad  (4\nmid D). \qquad (4.1)$$
Here for the later one, note that if $x^2-Dy^2=-4$ has solution then has solution with $x$ even. Let 

$$\psi=\matrixx{u}{\frac{D}{4}v}{v}{u}\quad  (4|D), \qquad \matrixx{u-v}{\frac{D-1}{2}v}{2v}{u+v}\quad  (4\nmid D). \qquad (4.2)$$
Then the determinant of $\psi$ is $-1$ by (4.1), so the torus bundle $M_\psi$ is non-orientable.
It is the direct calculation that $D_\psi=D$.
Then  $M=M_{\psi^2}$ is the unique orientable double cover of $M_\psi$.
Since $\psi^2$ and $\psi$ have the same fixed points as actions on $\mathbb R\cup \infty$, we have $D_{\psi^2}=D_\psi$ by definition.
So $D_M=D$.

On the other hand, assume (2), i.e. the double cover $M_{\psi^2}$ of $M_\psi$, $\det \psi=-1$, has discriminant $D$,  
then by the equality $$\psi^2=( w\psi)^{-1} w\psi^2 w (w\psi),\qquad (4.3)$$ 
where $w=\matrixx{0}{1}{1}{0}$ with $\det w=-1$. Since $\det w\psi=1$,  by (4.3) we have $$[Q_{\psi^2}]=[Q_{w\psi^2 w}].\qquad (4.4)$$

It is a direct verification that $$Q_{w\phi w}((x,y)w)=-Q_\phi(x,y).\qquad (4.4+)$$ So we have
$$-Q_{\psi^2}(x,y)=Q_{w\psi^2 w}(y,x)$$
which implies that 
$$[-Q_{\psi^2}]=-[Q_{w\psi^2 w}]\qquad (4.5)$$

Combing (4.4), (4.5) and  $[-Q]=-[Q]+[-Q_{0, D}]$ for any $[Q]\in C(D)$ (Lemma \ref {rel-in C(D)} (iii)), we have $[-Q_{0, D}]=0$, that is 
$[-Q_{0, D}]=[Q_{0, D}]$, which  is equivalent to the negative Pell equation $x^2-Dy^2=-4$ has integral solution (Lemma \ref {rel-in C(D)} (ii)). This shows the equivalence between (1) and (2).

Note that the double cover $M_\phi$ of $N_\psi$ is given as follows:
$$\phi=\matrixx{2bc+1}{2bd}{2ac}{2bc+1}, \qquad \text{if}\ \psi=\matrixx{a}{b}{c}{d}, \ abcd\neq 0.$$
It is clear that $M_\phi$ has discriminant $D_\phi=4\frac{abcd}{\gcd (ac, bd)^2}$. Note that
\begin{itemize}

\item $2[Q_\phi]\in C(D_\phi)$ is the zero class by Theorem \ref{Gauss}, 
and thus $M_\phi$ is achiral if and only if the negative equation $x^2-D_\phi y^2=-4$ has solution. 

\item $N_\psi$ is achiral if and only $-1$ is a self-mapping degree of $N_\psi$ by Lemma \ref{-1}, 
and  if and only if $a+d=0$ by \cite[Theorem 1.7]{SWW}.

\item if $a+d=0$, then $D_\phi=-\frac{4bc}{\gcd(b, c)^2}$, and therefore  
$-4=4a^2+4bc=4a^2-D_\phi \gcd (b, c)^2$.   

\item if if $N_\psi$ is achiral, then $M_\phi$ is achiral, since the cover $M_\phi\to N_\psi$ is characteristic by the "Moreover" part of Proposition \ref{Class1}
\end{itemize}
It follows that if $N_\psi$ is achiral with $D_\phi=\lcm (D, 4)$, then $x^2-Dy^2=-4$ has solutions. 

On the other hand, if the negative Pell equation $x^2-Dy^2=-4$ has solution, then there is always solution with $x$ even, say $(2a, b)$. Take 
$$\psi=\begin{cases}\matrixx{a}{b}{-Db/4}{-a}, \qquad &\text{if $4|D$}, \\
	\matrixx{a}{b/2}{-Db/2}{-a}, &\text{if $D\equiv 1\mod 4$}.
\end{cases}$$
Then $M_\phi$ is achiral of discriminant $D$ or $4D$. This shows the equivalence between (1) and (3). 	
\end{proof}

\begin{cor}\label{Pell2} Let $\CM$ be a commensurable class of Sol 3 manifolds of discriminant $D$. Then the following conditions are equivalent.
\begin{enumerate}
	\item the negative Pell equation $x^2-D y^2=-4$ has solutions;
	\item $\CM$ contains a non-orientable Sol 3-manifolds;
	\item $\CM$ contains an achiral tour semi-bundle.
\end{enumerate}
\end{cor}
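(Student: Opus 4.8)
The plan is to deduce Corollary \ref{Pell2} from Proposition \ref{Pell} by reorganizing its three statements around the discriminant $D=D_\CM$ of the commensurable class, rather than around a specific discriminant $D_\phi$ of a torus bundle. The key observation is that commensurability is governed entirely by the real quadratic field $\BQ(\sqrt{D_\CM})$ (Proposition \ref{Class2}), so I must translate the ``characteristic double cover of discriminant $D$'' conditions in Proposition \ref{Pell} into membership in $\CM$. First I would establish the equivalence of (1) and (2). Assuming (1), Proposition \ref{Pell} (1) $\Rightarrow$ (2) produces a non-orientable torus bundle $M_\psi$ whose orientable double cover $M_{\psi^2}$ has $D_{M_{\psi^2}}=D$; since $D=D_\CM$ is the fundamental discriminant of the field, $M_{\psi^2}$ lies in $\CM$, and hence so does its cover $M_\psi$, giving a non-orientable element of $\CM$. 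Conversely, if $\CM$ contains a non-orientable Sol 3-manifold $M$, then by Proposition \ref{Class1} (2) every semi-bundle is orientable, so $M$ must be a non-orientable torus bundle $M_\psi$ with $\det\psi=-1$; its double cover $M_{\psi^2}\in\CM$ has discriminant $D_{M_{\psi^2}}=c^2 D_\CM$ for some integer $c$, which realizes condition Proposition \ref{Pell} (2) for the discriminant $c^2 D_\CM$, yielding solvability of $x^2-(c^2D_\CM)y^2=-4$.

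The main obstacle is precisely this last point: the negative Pell equation in Proposition \ref{Pell} is stated for the discriminant of the double cover, which is $c^2 D_\CM$, not $D_\CM$ itself. I must therefore verify that solvability of $x^2-(c^2D)y^2=-4$ for some $c$ is equivalent to solvability of $x^2-Dy^2=-4$ at the fundamental discriminant. The forward direction (fundamental $\Rightarrow$ some multiple) is trivial. For the reverse, I would pass through the class-group reformulation already available: by Lemma \ref{rel-in C(D)} (ii), solvability of $x^2-Dy^2=-4$ is equivalent to $[-Q_{0,D}]=[Q_{0,D}]$ in $C(D)$, i.e.\ to $-1$ lying in the image $H_D$ of the principal form. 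Via Step 2 of the proof of Theorem \ref{Gauss}, this is a purely congruence-theoretic condition on the prime factorization of $D$ (no prime $\equiv 3\bmod 4$ and $16\nmid D$) that depends only on the \emph{squarefree-times-fundamental} structure of $D$; since $c^2 D_\CM$ and $D_\CM$ share the same set of prime divisors $\equiv 3\bmod 4$ and $D_\CM$ already satisfies $16\nmid D_\CM$, the two equations are solvable together. This reduces everything to the fundamental discriminant cleanly.

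Finally, for the equivalence of (1) and (3), I would invoke Proposition \ref{Pell} (1) $\Leftrightarrow$ (3): solvability of $x^2-Dy^2=-4$ is equivalent to the existence of an achiral semi-bundle with characteristic double cover of discriminant $\lcm(D,4)$. Since $\lcm(D_\CM,4)=D_\CM$ when $4\mid D_\CM$ and $\lcm(D_\CM,4)=4D_\CM$ when $D_\CM\equiv 1\bmod 4$, in either case this discriminant equals $c^2 D_\CM$ for $c\in\{1,2\}$, so the semi-bundle's double cover lies in $\CM$ by Proposition \ref{Class2}; as the cover is characteristic (Lemma \ref{semi-torus bundle}), the semi-bundle itself is commensurable to it and hence belongs to $\CM$. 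Conversely an achiral semi-bundle in $\CM$ feeds back into Proposition \ref{Pell} (3) at the appropriate multiple of $D_\CM$, and the same prime-factorization reduction as above returns solvability at $D_\CM$. Thus (1), (2), and (3) are all equivalent, with the only genuine work being the descent from $c^2 D_\CM$ to $D_\CM$, which I handle through genus theory rather than by manipulating Pell solutions directly.
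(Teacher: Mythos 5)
Your overall architecture is the right one (and, since the paper states this as an unproved corollary of Proposition \ref{Pell}, presumably the intended one): run Proposition \ref{Pell} at the level of a specific manifold, then use Proposition \ref{Class2} and the fact that double covers stay in the same commensurable class to transfer everything to $\CM$. But the step you identify as ``the only genuine work'' --- the passage between the Pell equation at $D_\CM$ and at $c^2D_\CM$ --- is handled incorrectly, and the error is not cosmetic. First, you have the two directions reversed: solvability of $x^2-(c^2D)y^2=-4$ \emph{trivially} implies solvability of $x^2-Dy^2=-4$ via $(x,y)\mapsto(x,cy)$, whereas the direction you call trivial (fundamental $\Rightarrow$ arbitrary multiple) is in general \emph{false} --- e.g.\ $x^2-5y^2=-4$ has the solution $(1,1)$ but $x^2-45y^2=-4$ has none, because the order of conductor $3$ in $\BQ(\sqrt5)$ has no unit of norm $-1$. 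Second, the genus-theoretic ``descent'' you propose rests on a false equivalence: $[-1]\in H_D$ is equivalent (Step 1 of Theorem \ref{Gauss}) to $2[Q]=[-Q_{0,D}]$ for \emph{some} class $[Q]$, not to $[Q_{0,D}]=[-Q_{0,D}]$, and only the latter is equivalent to solvability of $x^2-Dy^2=-4$ (Lemma \ref{rel-in C(D)}(i),(ii)). If negative-Pell solvability really were the congruence condition ``no prime $\equiv 3\bmod 4$ and $16\nmid D$,'' the density in Theorem \ref{density}(2) would be $1$ rather than $1-\rho$; your claimed equivalence contradicts the very phenomenon the Stevenhagen conjecture quantifies.

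The good news is that the proof is easily repaired, because the nontrivial ascent is never actually needed. In the directions $(2)\Rightarrow(1)$ and $(3)\Rightarrow(1)$ the double cover in $\CM$ has discriminant $c^2D_\CM$, Proposition \ref{Pell} gives solvability at $c^2D_\CM$, and the substitution $(x,y)\mapsto(x,cy)$ descends to $D_\CM$ --- no genus theory required. In the directions $(1)\Rightarrow(2)$ and $(1)\Rightarrow(3)$ the constructions in the proof of Proposition \ref{Pell} produce double covers of discriminant exactly $D_\CM$ or $\lcm(D_\CM,4)\in\{D_\CM,4D_\CM\}$; these generate the field $\BQ(\sqrt{D_\CM})$ and hence lie in $\CM$ by Proposition \ref{Class2}, so no ascent to other conductors ever occurs. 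Replace your genus-theory paragraph with these two observations and the argument closes.
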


The equation  $x^2-Dy^2=-4$ is called the negative Pell equation. The solvability of negative Pell equation has a long history and P. Stevenhagen \cite{Ste} made the following conjecture. 

\begin{conj}[Stevenhagen]   The density of fundamental positive discriminants $D$, for which $x^2-Dy^2=-4$ have solutions,  among all fundamental positive discriminants $D$ without prime factors $\equiv 3\mod 4$ is $1-\rho$ with
$$\rho:=\prod_{j=1}^\infty  \left(1+2^{-j}\right)^{-1} = 0.41942\cdots.$$\end{conj}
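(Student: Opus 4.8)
The statement is precisely the theorem of Koymans and Pagano \cite{KP}, settling Stevenhagen's conjecture \cite{Ste}; the plan is to follow the strategy of their proof, which is number-theoretic and does not use the topological machinery developed above. \textbf{Step 1 (algebraic reduction).} First I would translate solvability into a condition inside the form class group, using the lemmas already in hand. By Lemma \ref{rel-in C(D)}(ii), the equation $x^2-Dy^2=-4$ is solvable if and only if $[-Q_{0,D}]=[Q_{0,D}]$ in $C(D)$; since $C(D)$ is the narrow (form) class group, this is the classical statement that the fundamental unit of the maximal order has norm $-1$, equivalently that the narrow and wide class groups coincide. By Gauss' genus theory --- the exact sequence (3.6) together with the characterization (3.7)--(3.8) of $H_D$, exactly as used in Theorem \ref{Gauss} --- the admissibility hypothesis ``no prime factor $\equiv 3\bmod 4$'' is the condition $[-1]\in H_D$, i.e. $[-Q_{0,D}]\in 2C(D)$ (recall $16\nmid D$ is automatic for fundamental $D$). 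Thus for every admissible fundamental discriminant the class $[-Q_{0,D}]$ already lies in the principal genus, and solvability of the negative Pell equation reduces to the single finer condition $[-Q_{0,D}]=0$.

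\textbf{Step 2 (the probabilistic model).} Next I would set up the Cohen--Lenstra--Gerth model for the $2$-primary part of $C(D)$. Conditioning on the $2$-rank --- which is pinned down by genus theory in terms of the number of prime factors of $D$, whose distribution over admissible $D<X$ obeys an Erd\H{o}s--Kac type law --- the vanishing of $[-Q_{0,D}]$ inside $2C(D)$ is controlled by the higher $2^{k}$-ranks, encoded by the R\'edei symbols and their iterates. In the random model the failure event $[-Q_{0,D}]\neq 0$ (negative Pell unsolvable) becomes a condition on the coranks of the associated R\'edei matrices over $\BF_2$, and summing the conditional probabilities against the distribution of those ranks makes the failure probability telescope to
$$\rho=\prod_{j=1}^\infty\left(1+2^{-j}\right)^{-1},$$
so that the density predicted by the model for the solvable case is $1-\rho$. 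This part is purely formal once the distribution of the governing symbols is known.

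\textbf{Step 3 (equidistribution of the governing symbols).} The substance of the proof is to show that the algebraic invariants above genuinely equidistribute as $D$ ranges over admissible fundamental discriminants up to $X$, so that the model of Step 2 computes the true density. The $4$-rank statistics (the R\'edei matrix) are reachable by the character-sum and large-sieve estimates of Fouvry and Kl\"uners. The decisive and far harder input is the \emph{joint} equidistribution of the $2^{k}$-ranks for all $k\geq 3$: one expresses the higher R\'edei symbols as multilinear Legendre-symbol functions of $D$ and must prove square-root cancellation in sums of such expressions. This is exactly the regime that classical sieve and $L$-function methods cannot reach; it is Smith's breakthrough, via an iterated reflection argument combined with additive combinatorics, that supplies the required cancellation. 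Koymans and Pagano \cite{KP} adapt this machinery to the specific family of admissible discriminants and to the specific governing element $[-Q_{0,D}]$ attached to the negative Pell equation, and combining it with Step 2 yields the density $1-\rho$.

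\textbf{Main obstacle.} I expect Step 3 to be the crux. The higher R\'edei symbols are not multiplicative functions of $D$ amenable to standard analytic tools, and controlling their correlations forces a reduction, after a change of variables, to counting $\BF_2$-solutions of systems of quadratic equations with an error term small enough to beat the main term. By contrast, the reduction in Step 1 is immediate from Lemma \ref{rel-in C(D)} and Theorem \ref{Gauss}, and the bookkeeping in Step 2 is a routine evaluation of the Cohen--Lenstra--Gerth average once equidistribution is available.
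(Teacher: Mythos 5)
The paper does not prove this statement at all: it is recorded as Stevenhagen's conjecture and immediately attributed to Koymans--Pagano \cite{KP}, i.e.\ it enters the paper as an imported black box, so there is no internal proof to compare yours against. Your proposal is, as a description of the literature, accurate: Step 1 correctly reduces solvability of $x^2-Dy^2=-4$ to $[-Q_{0,D}]=0$ in $C(D)$ via Lemma \ref{rel-in C(D)}(i)--(ii), and correctly identifies the admissibility condition ``no prime factor $\equiv 3\bmod 4$'' with $[-1]\in H_D$, i.e.\ $[-Q_{0,D}]\in 2C(D)$, exactly as in Step 2 of the proof of Theorem \ref{Gauss} (and $16\nmid D$ is indeed automatic for fundamental discriminants). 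Step 2 is the standard Stevenhagen heuristic producing $\rho=\prod_{j\ge 1}(1+2^{-j})^{-1}$, and Step 3 correctly locates the actual mathematical content in the joint equidistribution of the higher R\'edei/governing symbols, reachable only through Smith's method as adapted in \cite{KP}.

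That said, you should be clear-eyed that what you have written is a roadmap of \cite{KP}, not a proof: Step 3, which you yourself flag as the crux, is precisely the theorem, and your text ultimately discharges it by citing the very paper whose result is being asserted. There is no error in your outline, but there is also no new argument in it; the only parts you could actually execute from the material in this paper are Step 1 (which is genuinely immediate from Lemma \ref{rel-in C(D)} and the genus-theory computation in Theorem \ref{Gauss}) and the formal bookkeeping in Step 2. If the intent is to justify the use of this statement in Theorem \ref{density}, the honest course --- and the one the authors take --- is to cite \cite{Ste} for the conjecture and \cite{KP} for its resolution rather than to present a sketch that cannot be completed within the scope of this paper.
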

This conjecture is recently proved by P. Koymans and C. Pagano \cite{KP}.


\begin{lem}
$$\lim_{X\ra \infty}  \frac    {\#\{D_{\CM}<X| D_{\CM}\, \text{contains no prime $\equiv 3\mod 4$}\}}{X}=0,\qquad (4.6)$$
$$\lim_{X\ra \infty} \frac {\#\{D_{\CM}<X\}} {X}=C>0, \qquad (4.7)$$
\end{lem}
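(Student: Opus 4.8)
The plan is to reduce both limits to standard counts of squarefree integers in residue classes modulo $4$, combined with the classical estimate for the number of integers that are sums of two squares. By Proposition \ref{Class2} the invariants $D_\CM$ range exactly over the positive fundamental discriminants, i.e.\ the integers $D>1$ of one of two shapes: $D=d$ with $d\equiv 1\mod 4$ squarefree, or $D=4d$ with $d\equiv 2,3\mod 4$ squarefree.

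To prove (4.7) I would split $\#\{D_\CM<X\}$ according to these two shapes. The discriminants of the first shape are the squarefree $d\equiv 1\mod 4$ with $d<X$, while those of the second shape are $D=4d$ with $d\equiv 2,3\mod 4$ squarefree and $d<X/4$. Each summand is a count of squarefree integers in a fixed class modulo $4$ below a linear bound, which one evaluates by the M\"obius identity $\mu^2(n)=\sum_{e^2\mid n}\mu(e)$ restricted to that progression; this is a routine sieve computation with error $O(\sqrt{X})$. It yields that the squarefree integers in each of the classes $1,2,3\mod 4$ below $Y$ number $\tfrac{2}{\pi^2}Y+O(\sqrt Y)$, whence
$$\#\{D_\CM<X\}=\frac{2}{\pi^2}X+2\cdot\frac{2}{\pi^2}\cdot\frac{X}{4}+O(\sqrt X)=\frac{3}{\pi^2}X+O(\sqrt X).$$
This establishes (4.7) with $C=3/\pi^2>0$.

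For (4.6) the key observation is arithmetic rather than analytic: if $D_\CM$ has no prime factor $\equiv 3\mod 4$, then every prime dividing $D_\CM$ is $2$ or $\equiv 1\mod 4$. In particular no prime $\equiv 3\mod 4$ divides $D_\CM$ to an odd power, so by the two-squares theorem $D_\CM$ is a sum of two integer squares. Hence the set counted in the numerator of (4.6) is contained in $\{n<X:\ n\ \text{is a sum of two squares}\}$, which by Landau's theorem has cardinality $O(X/\sqrt{\log X})$. Dividing by $X$ and letting $X\ra\infty$ forces the limit in (4.6) to be $0$.

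The computation behind (4.7) is entirely routine; the only care needed there is to track the three residue classes and the factor $4$ coming from $D=4d$. The substantive ingredient is the bound invoked for (4.6): the inclusion into the sums-of-two-squares set reduces the claim to Landau's $X/\sqrt{\log X}$ asymptotic, and that analytic estimate --- equivalently, the statement that integers built only from the density-$1/2$ set of primes $\{2\}\cup\{p\equiv 1\mod 4\}$ are sparse --- is where the real content of the upper bound sits. One could instead deduce the same $o(X)$ bound directly from the Selberg--Delange method or Wirsing's theorem applied to this set of primes, but citing Landau is the cleanest route.
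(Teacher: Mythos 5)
Your proof is correct and follows essentially the same route as the paper: (4.6) is reduced to the density-zero statement for sums of two squares via the two-squares theorem, and (4.7) is obtained by splitting the fundamental discriminants into the classes $D=d$ ($d\equiv 1\bmod 4$ squarefree) and $D=4d$ ($d\equiv 2,3\bmod 4$ squarefree), yielding the same constant $C=3/\pi^2$. The only cosmetic difference is that you carry out the squarefree counts by a M\"obius sieve where the paper cites the densities from the literature.
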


\begin{proof}
Note that a  integer $n>0$ is the sum of two integral squares if and only if the square-free part of $n$ contains no prime  $\equiv 3\mod 4$ \cite[Chap. 2]{Fl}. So we have 
$$\{D_{\CM}| D_{\CM}\, \text{contains no prime $\equiv 3\mod 4$}\}=\{D_{\CM}| D_{\CM}=a^2+b^2 \}. \qquad (4.8)$$

It is a classical fact that 

$$\lim_{X\ra \infty} \frac{\#\{0<D<X |D=a^2+b^2 \}}{X}=0, \qquad (4.9)$$
Then (4.6) follows from (4.8) and (4.9).






Let $S$ be the set of fundamental discriminants. Then for each $D\in S$,%
$D>0, \, D\equiv 0,1\mod 4,$  which is equivalent to 
$$D\equiv 1 \mod 4\,\,{\it or}\,\, D\equiv 0 \mod 8\,\,{\it or}\,\, D\equiv 4 \mod 8.$$
Therefore, the set $S$ equals the disjoint union of the following three subsets: 
$$S_1=\{ D\equiv 1 \mod 4\mid D\in S \}, \,S_2=\{ D\equiv 0 \mod 8\mid D\in S \}, \,S_3=\{ D\equiv 4 \mod 8\mid D\in S  \},$$
Now we can cite \cite [p12]{FK} to prove (4.7): 
 all densities of $S_1$, $S_2$ and $S_3$  exist and equal to $2/\pi^2$, $1/2\pi^2$, $1/2\pi^2$ respectively. So the density of $S$ exists and equals
$C=\frac{2}{\pi^2}+\frac{1}{2\pi^2}+\frac{1}{2\pi^2}=\frac{3}{\pi^2}>0.$
\end{proof}
\begin{proof}[Proof of Theorem \ref{density}] 
 (1) 
 Let $D$ be prime  in the form of $4k+1$. The $D=D_\CM$ for some commensurable class $\CM$ by Proposition \ref{Class2}.
Then $\CM$ is achiral by the "only if" part of  Corollary \ref{Gauss2}. It is a well known fact that there are infinitely many primes in this form of 
$4k+1$ \cite[Theorem 10.5]{EW}.

Now we prove the "however" part. (4,7) and (4.8) implies that

$$\lim_{X\ra \infty} \frac    {\#\{D_{\CM}<X| D_{\CM}\, \text{contains no prime $\equiv 3\mod 4$}\}}    {\#\{D_{\CM}<X\}}=0.\qquad (4.9)$$

By Theorem \ref{Class2}, we have
$${\#\{\CM\, |\, D_{\CM}<X\}}= {\#\{D_{\CM}<X\}}\qquad (4.10)$$

By Theorem \ref{Class2} and Theorem \ref{Gauss2}, we have 
$${\#\{\text{achiral} \ \CM\  |D_{\CM}<X\}}={\#\{ D_{\CM}<X | \CM  \ \text{achiral}\ \}}$$$$= \#\{D_{\CM}<X| D_{\CM}\, \text{contains no prime $\equiv 3\mod 4$}\}\qquad (4.11)$$
Then (4.10), (4.11) and (4.9) imply 
$$\lim_{X\ra \infty} \frac{\#\{\text{achiral}\ \CM\ \ |D_{\CM}<X\}}{\#\{\CM \ |\ D_{\CM}<X \ {}\}}=0,$$

(2) By Theorem \ref{Class2} and Corollary \ref{Pell2}, we have 
 $$\#\{ \CM \, \text{contains an non-orientable element }\, |\, D_\CM <X\ \}$$
 $$=\#\{D_\CM <X \,|\, \CM \, \text{contains an non-orientable element}  \}$$ 
 $$=\#\{D_\CM <X \,|\, \text{$x^2-D_\CM y^2=-4$ has solutions}  \}.\qquad(4.12)$$

(4.11) and (4.12) and  the positive answer of Stevenhagen Conjecture \cite{KP} imply
   $$\lim_{X\ra \infty} \frac {\#\{ \CM \ {}\text{contains an non-orientable element \,}|\, D_\CM <X\ \}}{\#\{\text{achiral}\ \CM\ \ |D_{\CM}<X\}}=1-\rho.$$
\end{proof}



\begin{prop}\label{Gauss4} There exists $M\in \Sol_D$ double cover a tours semi-bundle if and only if $4|D$. Moreover,  $M_\phi \in \Sol_D$ is the double cover of a tours semi-bundle if and only if $2[Q_\phi]=0$ and $\phi\equiv I \mod 2\BZ$, in this case, there is a unique tours semi-bundle double covered by $M_\phi$.  
\end{prop}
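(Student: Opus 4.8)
The plan is to recast the double-cover relation of Lemma~\ref{semi-torus bundle} in terms of reflections. Setting $R=\matrixx{1}{0}{0}{-1}$, a one-line computation shows that the monodromy $\phi=\matrixx{2bc+1}{2ab}{2cd}{2bc+1}$ attached to $\psi=\matrixx{a}{b}{c}{d}$ is exactly $\phi=\psi R\psi^{-1}R$. Hence $M_\phi$ double covers a semi-bundle iff some $\GL_2(\BZ)$-conjugate of $\phi^{\pm1}$ has the form $\psi R\psi^{-1}R$ with $\psi\in\SL_2(\BZ)$ and $abcd\neq0$. I will use two facts about \emph{reflections} $S$ (meaning $\det S=-1$, $\tr S=0$, so $S^2=I$): (a) $S$ is $\GL_2(\BZ)$-conjugate to $R$ iff $S\equiv I\mod 2$ (conjugacy invariance of the class mod $2$ gives one direction; conversely $S\equiv I\mod2$ makes $w\pm Sw\in2\BZ^2$ for all $w$, so $\tfrac12(w+Sw)$ and $\tfrac12(w-Sw)$ are integral eigenvectors summing to $w$, whence the $\pm1$-eigenvectors form a $\BZ$-basis); and (b) $R\,\matrixx{p}{q}{r}{p}\,R=\matrixx{p}{-q}{-r}{p}=\matrixx{p}{q}{r}{p}^{-1}$ whenever $p^2-qr=1$, i.e.\ $R$ inverts every equal-diagonal matrix.

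For necessity in the ``moreover'' and the ``only if'' of the first claim, take $\phi=\matrixx{2bc+1}{2ab}{2cd}{2bc+1}$. Then $\phi\equiv I\mod2$ is immediate; the form $Q_\phi=\bigl(\tfrac{cd}{s},\,0,\,\tfrac{-ab}{s}\bigr)$ with $s=\sgn(\tr\phi)\gcd(ab,cd)$ has middle coefficient $0$, hence equals its opposite and $2[Q_\phi]=0$ (Lemma~\ref{rel-in C(D)}(v)); and $D_\phi=4abcd/\gcd(ab,cd)^2$, so $4\mid D$. Since $2[Q_\phi]=0$, the residue of $\phi$ mod $2$, and $D_\phi$ are all invariant under $\GL_2(\BZ)$-conjugation and under $\phi\mapsto\phi^{-1}$, these conclusions hold for every representative of $M_\phi$.

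For the converse I assume $4\mid D$ together with $2[Q_\phi]=0$ and $\phi\equiv I\mod2$. The key step is to put $\phi$ in equal-diagonal form. As $4\mid D$ and $[Q_\phi]$ is ambiguous, Gauss' theory of ambiguous forms (used as in Theorem~\ref{Gauss}) furnishes a representative $(\alpha,0,\gamma)$ with zero middle coefficient; equivalently, an $\SL_2(\BZ)$-conjugation brings $\phi$ to $\phi_0=\matrixx{p}{q}{r}{p}$. Conjugation preserves the class mod $2$, so $\phi_0\equiv I\mod2$, i.e.\ $p$ odd and $q,r$ even. By (b) $R$ inverts $\phi_0$, so $S_1:=\phi_0R=\matrixx{p}{-q}{r}{-p}$ is a reflection with $S_1\equiv I\mod2$; by (a) there is $\psi\in\SL_2(\BZ)$ with $S_1=\psi R\psi^{-1}$, giving $\phi_0=S_1R=\psi R\psi^{-1}R$. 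Choosing $\psi$ with columns the primitive $\pm1$-eigenvectors of $S_1$, proportional to $(q,p-1)$ and $(q,p+1)$, all entries $a,b,c,d$ are nonzero because $q\neq0$ and $|p|>1$ (as $\phi_0$ is Anosov); thus $abcd\neq0$ and $M_\phi$ double covers $N_\psi$. The existence half of the first claim follows by producing one such $\phi$: writing $D=4D'$ and letting $(a_2,u_2)$ be the square of the fundamental solution of $x^2-D'y^2=1$ (so $a_2=1+2u_1^2D'$ is odd and $u_2=2a_1u_1$ is even), the matrix $\phi=\matrixx{a_2}{u_2D'}{u_2}{a_2}$ has $D_\phi=D$, $Q_\phi=Q_{0,D}$ principal (so $2[Q_\phi]=0$), and $\phi\equiv I\mod2$, and the converse just proved yields a semi-bundle.

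For uniqueness, observe that once $\phi$ is written as $\psi R\psi^{-1}R$, the conjugator $\psi$ is determined by the reflection $S_1=\phi_0R$ up to right multiplication by the $\SL_2(\BZ)$-centralizer of $R$, namely $\{\pm I,\pm R\}=\{\diag(\pm1,\pm1)\}$, while passing from $\phi$ to $\phi^{-1}$ replaces $\psi$ by $\psi^{-1}$; these are exactly the moves $\psi'=\diag(\pm1,\pm1)\,\psi^{\pm1}\,\diag(\pm1,\pm1)$ under which $N_\psi\cong N_{\psi'}$ by Proposition~\ref{Class1}(2). Combined with the fact that the deck involution is fiber-preserving and hence determined up to isotopy by $\phi$ (Lemma~\ref{v}), so that different zero-middle-coefficient representatives of $[Q_\phi]$ yield the same quotient, this gives a unique semi-bundle doubly covered by $M_\phi$. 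I expect the main obstacle to be precisely the reduction to a zero-middle-coefficient representative compatible with $\phi\equiv I\mod2$: this is where $4\mid D$ is essential, since for $D\equiv1\mod4$ no such form exists and $M_\phi$ indeed covers no semi-bundle (so the ``moreover'' must be read under the standing hypothesis $4\mid D$, the regime identified by the first claim).
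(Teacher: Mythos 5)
Your reformulation of the double-cover relation as $\phi=\psi R\psi^{-1}R$, together with facts (a) and (b) about reflections, is correct and is exactly equivalent to the criterion the paper imports from \cite{Ma} (there exists $A\in\GL_2(\BZ)$ with $A\phi=\phi^{-1}A$, $\det A=-1$, $A\equiv\phi\equiv I\mod 2$): your $A$ is $\psi R\psi^{-1}$. Your necessity direction and the existence of some achiral example for each $4\mid D$ (via the principal form) are fine, and you are right --- and more careful than the paper, which asserts the false implication ``$\phi\equiv I\mod 2\Rightarrow 4\mid D$'' (e.g.\ $\phi=\matrixx{5}{8}{8}{13}$ has $D_\phi=5$) --- that the ``moreover'' can only be read under the standing hypothesis $4\mid D$.

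However, the converse of the ``moreover'' has a genuine gap at the step you yourself flag as the main obstacle: the claim that $4\mid D$ and $2[Q_\phi]=0$ force $[Q_\phi]$ to contain a representative $(\alpha,0,\gamma)$, equivalently that $\phi$ is $\SL_2(\BZ)$-conjugate to an equal-diagonal matrix. Gauss' theory gives an \emph{ambiguous form} in every ambiguous class, but such a form may be of the type $(\alpha,\alpha,\gamma)$ with no $\beta=0$ form in its class, even when $4\mid D$. Concretely, take $D=220$: here $C(220)\cong(\BZ/2)^2$ (the fundamental unit $89+12\sqrt{55}$ has norm $+1$, so $h^+=2h=4$, and there are four genera), while the forms with zero middle coefficient, $(\pm1,0,\mp55)$ and $(\pm5,0,\mp11)$, fall into only two classes (indeed $5\cdot3^2-11\cdot2^2=1$, so $[(5,0,-11)]=[Q_{0,220}]$). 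The ambiguous class $[(10,10,-3)]$ represents $17$ with $\left(\frac{17}{5}\right)=-1$, whereas every zero-middle-coefficient class has genus character $\left(\frac{\cdot}{5}\right)=+1$; so $[(10,10,-3)]$ contains no form $(\alpha,0,\gamma)$. Realizing this class by $\phi_1=\matrixx{149}{36}{120}{29}$ (trace $178$, $Q_{\phi_1}=(10,-10,-3)$, $\phi_1\equiv I\mod 2$, $2[Q_{\phi_1}]=0$, $4\mid 220$), one checks directly that no $A=\matrixx{x}{y}{z}{-x}$ with $x^2+yz=1$ and $10z=-10x-3y$ is $\equiv I\mod 2$ (all solutions have $z$ odd), so $M_{\phi_1}$ covers no semi-bundle although it satisfies your (and the proposition's) hypotheses. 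I should say plainly that the paper's own proof makes the identical unproved assertion at this point (``we have that $\phi$ is $\SL_2(\BZ)$-conjugate to $\phi_0=\matrixx{a}{b}{c}{a}$''), so this is a defect of the statement/proof in the paper rather than something your argument alone misses; but as written your converse does not go through, and the correct extra condition is precisely that $[Q_\phi]$ (or $[Q_\phi]+[-Q_{0,D}]$) contain a zero-middle-coefficient form, not merely that $2[Q_\phi]=0$.
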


\begin{proof} By \cite{Ma}, $M_\phi\in \Sol_D$ is the double of a tours semi-bundle if and only if there exists $A\in \GL_2(\BZ)$ such that 
$$A\phi=\phi^{-1} A, \qquad \det A=-1, \qquad A\equiv \phi \equiv I \mod 2\BZ.$$
Let $A=\matrixx{x}{y}{z}{w}$ and assume that $Q_\phi=\alpha x^2+\beta xy +\gamma y^2$. Then the condition $A\phi=\phi^{-1}A$ and $\det A=-1$ is equivalent to
$$A=\matrixx{x}{y}{z}{-x}, \quad x^2+yz=1, \quad \alpha z=\beta x+\gamma y.$$
One can check that
$$\matrixx{-x}{y}{-z}{-x}\matrixx{\alpha}{-\beta/2}{-\beta/2}{\gamma}\matrixx{-x}{-z}{y}{-x}=\matrixx{\alpha}{\beta/2}{\beta/2}{\gamma}.$$
It follows that the condition that there exists $A\in \GL_2(\BZ)$ such that $A\phi=\phi^{-1}A$ and $\det A=-1$ if and only if $2[Q_\phi]=0$. 

Note that $\phi\equiv I\mod 2$ implies $4|D$.  Assume that $4|D$ take $M_\phi\in \Sol_D$ such that $2[Q_\phi]=0$, we have that $\phi$ is  $\SL_2(\BZ)$-conjugate to $\phi_0=\matrixx{a}{b}{c}{a}$. It is easy to see that $A_0:=\matrixx{1}{0}{0}{-1}\equiv I \mod 2$ satisfies $A_0\phi_0=\phi_0^{-1}A_0$, $\det A_0=-1$, and that  $\phi_0^2\equiv I\mod 2$. Thus $M_{\phi^2}\in \Sol_D$ is the double cover of a tours semi-bundle. 

Let $A, A'$ be two such matrices, then $A'=AC$ with $C\in \SL_2(\BZ), \equiv I\mod 2$ and $C\phi=\phi C$. One can easily check that $A'$ is equivalent to $A$ in the sense of \cite{Ma}. Thus if $M_\phi$ is the double cover of at most one tours semi-bundle.  	
\end{proof}

 \section{Shimizu L-functions}
 
 In the 1970s, Hirzebruch \cite{Hir} formulated a conjecture on the signature of the Hilbert modular varieties whose cusp cross-sections are solvmanifolds, which relates the signature defects of these Sol manifolds to special values of Shimizu's L-functions of totally real fields.  We now recall the definition of Shimizu L-functions for Hilbert modular surfaces. In this case,  these solvmanifolds are oriented tours bundle $M_\phi$. 
 
 Given $\phi=\begin{pmatrix}a&b\\c&d\end{pmatrix}\in\SL_2(\BZ)$ with discrimimant $D_\phi=D$ and quadratic form $Q_\phi$:
	
	the Shimizu L-function is defined as 
	\[L(M, s):=\sum_{(0, 0)\neq (x,y)\in \BZ^2/\phi^\BZ}\frac{\sign\ Q_\phi(x,y)}{|Q_\phi(x,y)|^s}, \qquad \Re(s)>1\]
	only depends on its oriented equivalence class. In fact $L(M, s)$ has an entire continuation to $\BC$ (See Proposition \ref{entire}).

\begin{thm}\label{Shimizu}
	Let $M=M_\phi$ be an oriented Sol torus bundle. 
	
	(1) $M$ is achiral if and only if
		 the Shimizu L-function $L(M, s)\equiv 0$.
	
(2) If $L(M_{\phi_1}, s)=L(M_{\phi_2}, s)$ for  two non-achiral oriented bundles $M_{\phi_1}, M_{\phi_2}$ with same discriminant, then $M_{\phi_1}$ is homeomorphic to either $M_{\phi_2}$ or $M_{-\phi_2}$ as oriented torus bundle.  
\end{thm}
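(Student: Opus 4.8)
The plan is to translate the vanishing of $L(M,s)$ into the class-group condition of Theorem \ref{main1} by recognizing $L(M,s)$ as a difference of partial Dedekind zeta functions of $K=\BQ(\sqrt D)$ and invoking the standard linear independence of such zeta functions. For the easy direction of (1) I would argue conceptually: $L(M,s)$ depends only on the oriented homeomorphism type (this follows from Lemma \ref{oriented} together with (3.3) and (4.4+), which show the summand is preserved under $\SL_2(\BZ)$-conjugation and under the $w$-substitution), while Lemma \ref{inverse} and Lemma \ref{Q and -Q}(ii) give $-M_\phi=M_{\phi^{-1}}$ with form $Q_{\phi^{-1}}=-Q_\phi$, so a termwise comparison yields $L(-M,s)=-L(M,s)$. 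If $M$ is achiral then $M\cong -M$ as oriented manifolds, whence $L(M,s)=L(-M,s)=-L(M,s)$ and $L(M,s)\equiv 0$.

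For the converse and for (2) I would set up the arithmetic dictionary. Splitting the defining sum by the sign of $Q_\phi$ gives, for $\Re(s)>1$, the identity $L(M_\phi,s)=Z^{+}(Q_\phi,s)-Z^{-}(Q_\phi,s)$ with $Z^{-}(Q_\phi,s)=Z^{+}(-Q_\phi,s)$, where $Z^{\pm}$ sum $|Q_\phi|^{-s}$ over the positive/negative values on $\BZ^2/\phi^{\BZ}$. Passing from $\phi^{\BZ}$ to the full automorph group $\Aut^{+}(Q_\phi)$ multiplies each sum by the common index $2m_\phi$, where $\phi^{t}=\pm\alpha^{m_\phi}$ for the fundamental automorph $\alpha$; hence $L(M_\phi,s)=2m_\phi\,\ell([Q_\phi],s)$ with $\ell([Q],s):=\zeta^{+}([Q],s)-\zeta^{+}([-Q],s)$ a function of the class $[Q]\in C(D)$ alone. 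Under the Gauss correspondence $C(D)\cong \Cl^{+}(K)$, $[Q]\mapsto\CC$, I would identify $\zeta^{+}([Q],s)$ with the partial zeta $\zeta_K(s,\CC)$ of the narrow class $\CC$; the careful point is the bookkeeping of units and totally positive elements, which pins down $\ell([Q],s)=\zeta_K(s,\CC)-\zeta_K(s,\CC^{-1}\delta)$, where $\delta\in\Cl^{+}(K)$ is the $2$-torsion class matching $c_0:=[-Q_{0,D}]$ under Lemma \ref{rel-in C(D)} (the ambiguity $\CC$ versus $\CC^{-1}$ is harmless, see below).

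The analytic heart, and the main obstacle, is the linear independence input: the $\zeta_K(s,\CC)$ satisfy exactly the relation $\zeta_K(s,\CC)=\zeta_K(s,\CC^{-1})$, coming from Galois conjugation of ideals (which inverts the narrow class since $\CC\,\overline{\CC}$ is generated by a totally positive rational integer), and are otherwise linearly independent, so $\zeta_K(s,\CC_1)=\zeta_K(s,\CC_2)$ holds if and only if $\CC_1\in\{\CC_2,\CC_2^{-1}\}$. This is the standard fact that the Hecke $L$-functions $L(s,\chi)$, $\chi\in\widehat{\Cl^{+}(K)}$, are distinct Dirichlet series subject only to $L(s,\chi)=L(s,\bar\chi)$; it should be cited, and with it the coincidences of partial zetas are exactly the conjugation ones. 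Granting this, writing $Z_{\{\CC,\CC^{-1}\}}$ for the common value, we have $\ell([Q_\phi],s)=Z_{\{\CC,\CC^{-1}\}}-Z_{\{\CC\delta,\CC^{-1}\delta\}}$, so $\ell\equiv 0$ forces $\{\CC,\CC^{-1}\}=\{\CC\delta,\CC^{-1}\delta\}$, i.e. $\delta=1$ or $\CC^{2}=\delta$. By Lemma \ref{rel-in C(D)} this is precisely $[Q_{0,D}]=[-Q_{0,D}]$ or $2[Q_\phi]=[-Q_{0,D}]$, which by Theorem \ref{main1} is exactly achirality (the entire continuation of $L$ from Proposition \ref{entire} lets one read $\equiv 0$ off the Dirichlet coefficients for $\Re(s)>1$). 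This completes (1).

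For (2), non-achirality of both bundles gives $c_0\neq 0$ (so $\delta\neq 1$) and $\ell([Q_{\phi_i}],s)\neq 0$, meaning the pairs $\{\CC_i,\CC_i^{-1}\}$ and $\{\CC_i\delta,\CC_i^{-1}\delta\}$ are distinct. Feeding $2m_1\bigl(Z_{\{\CC_1,\CC_1^{-1}\}}-Z_{\{\CC_1\delta,\CC_1^{-1}\delta\}}\bigr)=2m_2\bigl(Z_{\{\CC_2,\CC_2^{-1}\}}-Z_{\{\CC_2\delta,\CC_2^{-1}\delta\}}\bigr)$ into the linear independence, the crossed matching is excluded because it places a coefficient $m_1+m_2\neq 0$ on a single basis element, so the only possibility is $m_1=m_2$ and $\{\CC_1,\CC_1^{-1}\}=\{\CC_2,\CC_2^{-1}\}$, i.e. $[Q_{\phi_1}]=\pm[Q_{\phi_2}]$ with equal powers. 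Finally I would translate back via Lemma \ref{**}: when $[Q_{\phi_1}]=[Q_{\phi_2}]$ and $m_1=m_2$ one gets $\phi_1\sim\pm\alpha^{m}\sim\pm\phi_2$ in $\SL_2(\BZ)$, so $M_{\phi_1}\cong M_{\phi_2}$ or $M_{-\phi_2}$ by Lemma \ref{oriented}; when $[Q_{\phi_1}]=-[Q_{\phi_2}]=[Q_{w\phi_2^{-1}w}]$, the same step gives $M_{\phi_1}\cong M_{w\phi_2^{-1}w}$ or $M_{-w\phi_2^{-1}w}$, which by Lemma \ref{oriented} are $M_{\phi_2}$ and $M_{-\phi_2}$ respectively, yielding the claim.
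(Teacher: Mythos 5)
Your argument is correct in outline and reaches the paper's conclusions, but it takes a genuinely different route for the hard direction of (1) and for (2). The paper stays entirely in the language of forms: it expands $L(M_\phi,s)$ as the Dirichlet series $\sum_n\bigl(|K_{n,\phi}|-|K_{-n,\phi}|\bigr)n^{-s}$, so that $L\equiv 0$ means $Q_\phi$ and $-Q_\phi$ represent the same integers, and then invokes Weber's theorem (Proposition \ref{Web}) to produce a common prime $p\nmid 2D$ and the elementary Lemma \ref{represent prime} to conclude $[-Q_\phi]=\pm[Q_\phi]$; for (2) it first normalizes traces by passing to powers $\psi_i=\phi_i^{n_i}$ and finishes with Lemma \ref{Q+tr}. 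You instead pass through the Gauss dictionary $C(D)\cong\Cl^+$ and identify $L(M_\phi,s)$ as $2m_\phi$ times a difference of partial zeta functions, extracting both $[Q_{\phi_1}]=\pm[Q_{\phi_2}]$ and $m_1=m_2$ in one stroke from linear independence, which lets you bypass the trace-normalization step and go straight to Lemma \ref{**} and Lemma \ref{oriented}. The two approaches rest on the same arithmetic fact — every class contains primes, and a split prime determines a class up to inversion — but yours packages it as the linear independence of partial Dedekind zetas (equivalently, distinctness of the ring class $L$-functions), which you defer to a citation, together with the form-class/ideal-class bookkeeping for the possibly non-maximal order $\CO_D$ (narrow versus wide classes, the identification of $[-Q_{0,D}]$ with the $2$-torsion class $\delta$), which you flag but do not carry out. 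Those are standard but nontrivial, and they are precisely what the paper's Weber-plus-Lemma \ref{represent prime} route avoids; so the paper's proof is the more elementary and self-contained of the two, while yours is conceptually cleaner once the dictionary is in place. Your easy direction of (1) coincides with the paper's.
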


We list some facts for the proof of Theorem \ref{Shimizu}.

\begin{prop} \cite{Web}\label{Web} Each $[Q]\in C(D)$ represents infinitely many primes. 
\end{prop}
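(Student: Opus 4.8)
The plan is to convert this statement about binary quadratic forms into a statement about the distribution of prime ideals among ideal classes, and then deploy the standard Dirichlet/Hecke analytic machinery. First I would invoke the classical form--ideal dictionary: the group $C(D)$ of $\SL_2(\BZ)$-classes of primitive forms of discriminant $D$ is isomorphic to the narrow ideal class group of the order $\CO$ of discriminant $D$ in $K=\BQ(\sqrt D)$ (this uses the description of orders recalled before Proposition \ref{Class2}). Under this isomorphism a class $[Q]$ corresponds to an ideal class $c$, and the primes $p$ properly represented by $Q$ are precisely the rational primes admitting an invertible prime ideal $\mathfrak{p}\subset\CO$ of norm $N(\mathfrak{p})=p$ with $[\mathfrak{p}]=c$. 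Thus it suffices to show that every class $c$ of the finite group $C(D)$ contains infinitely many degree-one prime ideals.

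For this I would attach to each character $\chi$ of $C(D)$ the ring-class $L$-function $L(s,\chi)=\sum_{\mathfrak{a}}\chi([\mathfrak{a}])\,N(\mathfrak{a})^{-s}$, summed over integral invertible ideals of $\CO$, with Euler product $\prod_{\mathfrak{p}}(1-\chi([\mathfrak{p}])N(\mathfrak{p})^{-s})^{-1}$ for $\mathrm{Re}(s)>1$. The trivial character gives the order zeta function $\zeta_{\CO}(s)=\prod_\chi L(s,\chi)$, which by the analytic class number formula extends meromorphically with a simple pole at $s=1$, while the $L(s,\chi)$ with $\chi\ne\mathbf 1$ are entire. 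Taking logarithms of the Euler products and applying orthogonality of characters yields
$$\sum_\chi \overline{\chi(c)}\,\log L(s,\chi)=|C(D)|\sum_{[\mathfrak{p}]=c}N(\mathfrak{p})^{-s}+O(1)\qquad (s\to 1^+),$$
where the $O(1)$ term absorbs the contribution of higher prime powers together with that of degree-two primes, whose norms $p^2$ give a sum dominated by $\sum_p p^{-2s}$, convergent at $s=1$.

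The main obstacle, exactly as in Dirichlet's theorem, is the non-vanishing $L(1,\chi)\ne 0$ for $\chi\ne\mathbf 1$, and I would obtain it directly from the factorization $\zeta_{\CO}(s)=L(s,\mathbf 1)\prod_{\chi\ne\mathbf 1}L(s,\chi)$. The left side has a simple pole at $s=1$, and $L(s,\mathbf 1)$ already supplies a simple pole there, so the remaining product must be finite and nonzero at $s=1$; since each nontrivial factor is holomorphic, none of them can vanish. This one argument handles real and complex characters uniformly and sidesteps the delicate separate treatment of quadratic characters. With non-vanishing in hand, as $s\to 1^+$ the $\chi=\mathbf 1$ term of the displayed sum diverges to $+\infty$ through the pole while every $\chi\ne\mathbf 1$ term stays bounded; hence $\sum_{[\mathfrak{p}]=c}N(\mathfrak{p})^{-s}\to\infty$. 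Therefore $c$ contains infinitely many prime ideals, and since those of norm a proper prime power form a convergent series, infinitely many have prime norm. Translating back through the form--ideal dictionary shows that $Q$ represents infinitely many primes, as claimed.
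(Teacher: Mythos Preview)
The paper does not give its own proof of this proposition; it simply cites Weber's 1882 theorem as a black box. Your analytic approach via ring-class $L$-functions is the standard modern route, and the overall strategy is sound.

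There is, however, a genuine slip in the non-vanishing step. You write $\zeta_{\CO}(s)=\prod_\chi L(s,\chi)$, but by your own definition the trivial character already gives $L(s,\mathbf 1)=\zeta_{\CO}(s)$; the full product over all characters of $C(D)$ is not $\zeta_{\CO}$. What that product actually equals is the Dedekind zeta function $\zeta_H(s)$ of the ring class field $H$ attached to $\CO$, and establishing this factorization is a theorem of class field theory. With the correct identity $\zeta_H(s)=\prod_\chi L(s,\chi)$ your argument does go through --- $\zeta_H$ has a simple pole at $s=1$, $L(s,\mathbf 1)$ contributes exactly that pole, and the remaining holomorphic factors must therefore all be nonzero --- but you now rest on two substantial inputs you did not name: the Artin-type factorization of $\zeta_H$, and the analytic class number formula for the number field $H$ (to know the pole is \emph{simple}). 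So the claim that this ``sidesteps the delicate separate treatment of quadratic characters'' is somewhat misleading: you have traded that delicacy for class field theory. If you prefer to stay elementary, the usual alternative is to show directly that $F(s)=\prod_\chi L(s,\chi)$ has non-negative Dirichlet coefficients and invoke a Landau-type argument, but along that path the real-character case genuinely does require separate care.
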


\begin{lem}\label{represent prime}
It both $Q_1, Q_2\in C(D)$ represent a prime $p$ with $(p, 2D)=1$, then $[Q_1]=\pm [Q_2]$.
\end{lem}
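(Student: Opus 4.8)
The plan is to characterize primitive binary quadratic forms of discriminant $D$ by the primes they represent, and then to translate the equivalence of represented-prime sets into an $\SL_2(\BZ)$-equivalence (up to sign) of the forms. First I would recall the elementary fact that if a primitive form $Q$ of discriminant $D$ represents a prime $p$ with $\gcd(p,2D)=1$, then $[Q]$ lies in a very restricted set of classes: concretely, $Q(x_0,y_0)=p$ with $(x_0,y_0)$ necessarily primitive (since $p$ is prime and coprime to $D$), and any primitive representation of $p$ by a form of discriminant $D$ shows that $Q$ is $\SL_2(\BZ)$-equivalent to a form $(p,\beta,\gamma)$ with $\beta^2-4p\gamma=D$. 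The key reduction is that the middle coefficient $\beta$ of such a reduced representative is determined modulo $2p$ by the congruence $\beta^2\equiv D \pmod{4p}$.

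The heart of the argument is then a counting/uniqueness step. If both $Q_1$ and $Q_2$ represent the same prime $p$ with $\gcd(p,2D)=1$, then by the above each $[Q_i]$ is $\SL_2(\BZ)$-equivalent to a form of the shape $(p,\beta_i,\gamma_i)$ with $\beta_i^2\equiv D\pmod{4p}$. The congruence $\beta^2\equiv D\pmod{4p}$ has, because $p$ is an odd prime coprime to $D$, exactly two solutions modulo $2p$, namely $\pm\beta_0$ for some fixed $\beta_0$. The two forms $(p,\beta_0,\gamma)$ and $(p,-\beta_0,\gamma)$ are $\SL_2(\BZ)$-equivalent respectively to $Q$ and to $-Q$ (or, equivalently, they are related by the change of variables $(x,y)\mapsto(x,-y)$ of determinant $-1$, which by Lemma~\ref{rel-in C(D)}(v) sends $[Q]$ to $-[Q]$). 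Hence $[Q_1]$ and $[Q_2]$ must both fall among the two classes $\{[Q],-[Q]\}$ attached to $p$, forcing $[Q_1]=\pm[Q_2]$.

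I would organize the proof concretely as follows. First, show that a primitive representation $Q(x_0,y_0)=p$ lets one complete $(x_0,y_0)$ to a matrix in $\SL_2(\BZ)$, so that $Q$ is properly equivalent to $(p,b,c)$ for some $b,c$ with $b^2-4pc=D$. Second, observe that replacing $b$ by $b+2p$ (an $\SL_2(\BZ)$ shear) leaves the class unchanged, so only $b\bmod 2p$ matters, and that $b\bmod 2p$ is constrained by $b^2\equiv D\pmod{4p}$. Third, solve this congruence: coprimality of $p$ with $2D$ guarantees that $D$ is a nonzero quadratic residue mod $p$ and that the two square roots $\pm b_0$ mod $2p$ are distinct, giving exactly the two candidate classes. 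Fourth, identify these two classes as $[Q]$ and $[-Q]$ using the determinant $-1$ substitution together with Lemma~\ref{rel-in C(D)}(v) and Lemma~\ref{Q and -Q}. Applying this to both $Q_1$ and $Q_2$ representing the same $p$ yields $[Q_1]=\pm[Q_2]$.

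The main obstacle I anticipate is bookkeeping the determinant-$-1$ bridge cleanly: one must be careful that the form $(p,-b_0,\gamma)$ corresponds to $-[Q]$ rather than merely to some other class in the same genus, and this is exactly where Lemma~\ref{rel-in C(D)}(v) is essential (it equates $[Q_1]=-[Q_2]$ with a $\GL_2(\BZ)$-equivalence of determinant $-1$). A secondary subtlety is the coprimality hypothesis $\gcd(p,2D)=1$, which is used twice—once to guarantee that every representation of $p$ is primitive, and once to guarantee that the congruence $b^2\equiv D\pmod{4p}$ has exactly two solutions modulo $2p$; without it the count of candidate classes could be larger and the conclusion $[Q_1]=\pm[Q_2]$ could fail. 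Everything else is routine reduction theory of binary quadratic forms.
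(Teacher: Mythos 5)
Your proposal is correct and follows essentially the same route as the paper: both arguments reduce each $Q_i$ to an $\SL_2(\BZ)$-equivalent form with leading coefficient $p$, show that the middle coefficient is then determined up to sign (your count of the two square roots of $D$ modulo $4p$ is exactly what the paper's divisibility computation with $|d_1|,|d_2|<p$ establishes), and identify the sign flip with class inversion via the determinant $-1$ substitution and Lemma~\ref{rel-in C(D)}(v).
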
 
\begin{proof}
By \cite[Exercise 3.30]{EW}, each $Q_i$ is $SL_2(\BZ)$-equivalent to a form $Q_i'(x,y)
=px^2+d_ixy+e_iy^2$, $d_i\le p$. Note $d_i-4pe_i=D$. Clearly $d_1$ and $d_2$ has the same parity.
Therefore $\frac{|d_1|-|d_2|}2, \frac{|d_1|+|d_2|}2\in\mathbb{Z}$. By $p \nmid D$, we have $p \nmid d_i$ and therefore $0<|d_i|<p$.
Therefore $0<\frac{|d_1|+|d_2|}2<p$. Since
$$\frac{d_1^2-d_2^2}4=\frac{|d_1|-|d_2|}{2}\frac{|d_1|+|d_2|}{2}=p(e_1-e_2).$$
Therefore $p|\frac{|d_1|-|d_2|}2.$ 
It follows  $|d_1|=|d_2|, e_1=e_2$. So  $[Q_1']=\pm [Q_2']$ and  $[Q_1]=\pm [Q_2]$.
\end{proof}

\begin{lem}\label{Q+tr} Suppose $\phi, \psi\in \SL_2(\BZ)$ with $D_\phi=D_\psi=D$.
If $[Q_{\phi}]=\pm [Q_{\psi}]$ and $\tr(\phi)=\tr(\psi)$.  Then $\phi$ is $\SL_2(\BZ)$-conjugate to either $\psi$ or $w\psi^{-1}w$.
\end{lem}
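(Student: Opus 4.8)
The plan is to reduce the whole lemma to a single auxiliary claim and then apply it twice. The claim I would isolate first is: \emph{if $[Q_{\phi_1}]=[Q_{\phi_2}]$ (genuine equality in $C(D)$, not up to sign) and $\tr\phi_1=\tr\phi_2$, then $\phi_1$ is $\SL_2(\BZ)$-conjugate to $\phi_2$.} Granting this, the two cases in the hypothesis $[Q_\phi]=\pm[Q_\psi]$ separate cleanly. In the case $[Q_\phi]=[Q_\psi]$, the claim applied directly to $\phi,\psi$ gives $\phi\sim\psi$. In the case $[Q_\phi]=-[Q_\psi]$, I would exhibit a matrix of the same trace as $\phi$ whose associated form lies in the class $-[Q_\psi]=[Q_\phi]$, namely $w\psi^{-1}w$, and invoke the claim again.

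To prove the auxiliary claim I would start from Lemma \ref{**}: since $[Q_{\phi_1}]=[Q_{\phi_2}]$, there exist $\alpha\in\SL_2(\BZ)$ and integers $m,n>0$ with $\phi_1$ conjugate to $\pm\alpha^m$ and $\phi_2$ conjugate to $\pm\alpha^n$. Because $\phi_1$ is anosov, so is $\alpha^m$, hence $\alpha$ has a real eigenvalue $\mu$ with $|\mu|>1$. Then $|\tr\phi_1|=|\mu|^m+|\mu|^{-m}$ and $|\tr\phi_2|=|\mu|^n+|\mu|^{-n}$, and since $t\mapsto|\mu|^t+|\mu|^{-t}$ is strictly increasing for $t>0$, the equality $\tr\phi_1=\tr\phi_2$ forces $m=n$. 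Writing $\phi_1\sim\epsilon_1\alpha^m$, $\phi_2\sim\epsilon_2\alpha^m$ with $\epsilon_i=\pm1$, the traces equal $\epsilon_i\tr(\alpha^m)$ with $\tr(\alpha^m)\neq0$, so $\tr\phi_1=\tr\phi_2$ forces $\epsilon_1=\epsilon_2$, whence $\phi_1\sim\phi_2$.

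For the second case I need two facts about $w=\matrixx{}{1}{1}{}$. First, the direct computation behind $(4.4+)$ gives $Q_{w\psi^{-1}w}(x,y)=Q_\psi((x,y)w)$; since $\det w=-1$, Lemma \ref{rel-in C(D)}(v) yields $[Q_{w\psi^{-1}w}]=-[Q_\psi]$, and combined with $[Q_\phi]=-[Q_\psi]$ this gives $[Q_\phi]=[Q_{w\psi^{-1}w}]$. Second, by cyclicity of trace and $w^2=I$ we have $\tr(w\psi^{-1}w)=\tr(\psi^{-1})=\tr\psi=\tr\phi$; note also $w\psi^{-1}w\in\SL_2(\BZ)$ is anosov with discriminant $D$. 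The auxiliary claim then gives $\phi\sim w\psi^{-1}w$, finishing the proof.

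The main obstacle is the sign bookkeeping, not any deep input. An error in matching the $\pm$ signs in the auxiliary claim, or in the sign of the relation $[Q_{w\psi^{-1}w}]=-[Q_\psi]$, would interchange the two admissible conclusions $\phi\sim\psi$ and $\phi\sim w\psi^{-1}w$, so both must be pinned down carefully (the first by $\tr(\alpha^m)\neq0$, the second by a short direct matrix computation). The only genuinely quantitative step is the eigenvalue monotonicity argument, which is precisely what upgrades "conjugate to a common power" into "equal exponent and equal sign".
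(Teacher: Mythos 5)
Your proof is correct. It differs from the paper's in the key uniqueness step. The paper proves the lemma by writing down an explicit closed formula recovering $\phi$ from the pair $(Q_\phi,\tr\phi)$ (namely $\phi=\sign\tr(\phi)$ times a matrix whose entries are expressed in $\alpha,\beta,\gamma,\ t=|\tr\phi|$ and $s=\sqrt{(t^2-4)/D}$), so that ``same form and same trace implies same matrix'' is immediate, and then combines this with (3.3) and Lemma \ref{Q and -Q} exactly as you do to handle the two sign cases; your treatment of the case $[Q_\phi]=-[Q_\psi]$ via $Q_{w\psi^{-1}w}(x,y)=Q_\psi((x,y)w)$, Lemma \ref{rel-in C(D)}(v) and $\tr(w\psi^{-1}w)=\tr\psi$ matches the paper's intent. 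Where you diverge is in proving the auxiliary claim: instead of the explicit reconstruction you invoke Lemma \ref{**} to put $\phi_1,\phi_2$ in the form $\pm\alpha^m,\pm\alpha^n$ and then use strict monotonicity of $k\mapsto|\mu|^k+|\mu|^{-k}$ together with $\tr(\alpha^m)\neq 0$ to force $m=n$ and equal signs. Both routes are valid; the paper's is a short self-contained linear-algebra computation, while yours imports the heavier $\mathrm{Aut}^+(Q)$ structure theory already established for Lemma \ref{**} but in exchange avoids any explicit formula and makes the sign bookkeeping transparent (it also parallels the argument already used in the proof of Theorem \ref{main1}, so nothing new is really being paid for).
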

\begin{proof} (i) First note that $\phi$ is determined by $Q_\phi$ and $\tr(\phi)$: Suppose $Q_\phi(x,y)=\alpha x^2+\beta xy + \gamma y^2$, one can solve that
 $\phi=\sign\tr(\phi)\matrixx{\frac{t+\beta s}{2}}{-\gamma s}{\alpha s}{\frac{t-\beta s}{2}}$, 
 where $t=|\tr(\phi)|$ and $s=\sqrt{\frac{t^2-4}D}$.
 
 (ii) If $[Q_{\phi}]=[Q_{\psi}]$, then  $Q_{\phi}(x,y)=Q_{\psi}((x,y)A)$; If $[Q_{\phi}]=-[Q_{\psi}]$, then  $Q_{\phi}(x,y)=Q_{\psi}((x,y)A'w)$;
 both $A, A'\in \SL_2(\BZ)$. Then one can verifies the Lemma by (i), (3.3) and Lemma \ref{Q and -Q}.
 \end{proof}

By Lemma \ref{Q and -Q}  we have 	
\begin{lem}	\label{II}
 $L(M_{-\phi}, s)= L(M_\phi, s)$, $L(M_{\phi^{-1}}, s)= -L(M_\phi, s)$ and  $L(M_{\phi}, s)= -L(M_{w\phi w}, s)$. 
\end{lem}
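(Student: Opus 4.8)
The plan is to read all three identities directly off the transformation behaviour of the quadratic form $Q_\phi$ recorded in Lemma \ref{Q and -Q} and in formula (4.4+), treating separately the summand $\sign Q_\phi(x,y)/|Q_\phi(x,y)|^s$ and the index set $\BZ^2/\phi^\BZ$ over which one sums. Throughout I use that the summand is constant on the orbits of $\langle\phi\rangle$ — this is exactly what makes $L(M,s)$ well defined — which holds because $\phi^t\in\Aut^+(Q_\phi)$ as in the proof of Lemma \ref{**}. Each computation is carried out for $\Re(s)>1$, where the series converge absolutely, and then persists on all of $\BC$ by the entire continuation of Proposition \ref{entire}.

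I would dispatch $L(M_{\phi^{-1}},s)=-L(M_\phi,s)$ first, as it is purely formal. Since $\langle\phi^{-1}\rangle=\langle\phi\rangle$, the two series run over the \emph{same} index set $\BZ^2/\phi^\BZ$. By Lemma \ref{Q and -Q}(ii) we have $Q_{\phi^{-1}}=-Q_\phi$, so term by term $\sign Q_{\phi^{-1}}(x,y)=-\sign Q_\phi(x,y)$ while $|Q_{\phi^{-1}}(x,y)|=|Q_\phi(x,y)|$, and summing produces the global sign flip.

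For $L(M_{-\phi},s)=L(M_\phi,s)$ the form is unchanged — $Q_{-\phi}=Q_\phi$ by Lemma \ref{Q and -Q}(i) — but now the index set genuinely differs, since $\langle-\phi\rangle\neq\langle\phi\rangle$ as subgroups of $\GL_2(\BZ)$ (indeed $-I\notin\langle\phi\rangle$, because no nonzero power of an Anosov matrix is $\pm I$). The device I would use is the $-I$-invariance of the summand: both $\sign Q_\phi$ and $|Q_\phi|^s$ are unchanged under $(x,y)\mapsto(-x,-y)$. Put $G=\langle\phi,-I\rangle$. Each $G$-orbit of a nonzero lattice vector splits into exactly two $\langle\phi\rangle$-orbits (and likewise two $\langle-\phi\rangle$-orbits), interchanged by $-I$; the two orbits are distinct because $\phi^m(x,y)=-(x,y)$ is impossible for a nonzero lattice vector, as the eigendirections of $\phi$ have irrational slope. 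Since $-I$ preserves the summand, the two $\langle\phi\rangle$-orbits in a given $G$-orbit contribute equally, so both $L(M_{\pm\phi},s)$ equal twice the sum over $G$-orbits and hence coincide. This orbit bookkeeping is the one genuinely non-formal step, and I expect it to be the main obstacle.

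Finally, for $L(M_\phi,s)=-L(M_{w\phi w},s)$ I would combine both mechanisms. Since $w^2=I$ we have $(w\phi w)^k=w\phi^k w$, so the $\GL_2(\BZ)$-involution induced by $w$ carries the orbits of $\langle w\phi w\rangle$ bijectively onto those of $\langle\phi\rangle$, giving a bijection of index sets. Under this change of variables, formula (4.4+), $Q_{w\phi w}((x,y)w)=-Q_\phi(x,y)$, converts each term of $L(M_{w\phi w},s)$ into the corresponding term of $L(M_\phi,s)$ up to the sign from $\sign(-t)=-\sign(t)$, while $|{-t}|=|t|$ keeps the denominators equal. Summing yields $L(M_{w\phi w},s)=-L(M_\phi,s)$, which is the asserted identity.
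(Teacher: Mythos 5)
Your proposal is correct and follows the same route as the paper, which derives all three identities from Lemma \ref{Q and -Q} and formula (4.4+); the paper simply cites Lemma \ref{Q and -Q} without further detail. The one genuinely non-formal point you identify --- that $\langle-\phi\rangle\neq\langle\phi\rangle$, so the index sets differ and one must check via the $\langle\phi,-I\rangle$-orbit decomposition (using that $\phi^m v=\pm v$ is impossible for nonzero integral $v$) that the two series nevertheless agree --- is handled correctly and is exactly the detail the paper leaves implicit.
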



\begin{proof}[Proof of Theorem \ref{Shimizu}]
Suppose $M$ is achiral, then $M_\phi=-M_\phi$, and furthermore  $M_\phi=M_{\phi^{-1}}$ by Lemma \ref{inverse}. 
Then  $\phi^{-1}$ is $SL_2(\BZ)$ conjugate to either $\phi$ or $w\phi^{-1} w$ by Lemma \ref{oriented}. That is either
$$ L(M_{\phi^{-1}}, s)= L(M_\phi, s) \ \ \text{or} \ \ L(M_{\phi^{-1}}, s)= L(M_{w\phi^{-1} w}, s)\qquad (5.1)$$
Combining (5.1) and  Lemma \ref{II}, we have  $L(M, s)\equiv 0$. 

On the other hand, let 
$K_{n, \phi} =\{(x, y)\in \BZ\oplus \BZ |Q(x, y)=n\}/\phi$ for each $n\in\BZ$. $K_{n, \phi}$ is finite. 
In $L(M_\phi, s)$, the term $1/n^s$ occurs $|K_{n, \phi}|$ times since each orbit in $K_{n, \phi}$ contribute a single $1/n^s$.
Similarly the term $-1/n^s$ occurs $|K_{-n, \phi}|$ times. So we have 

$$L(M_\phi, s)= \sum_{n=1}^{\infty}\frac {-|K_{-n, \phi}|+|K_{n, \phi}|}{|n|^s}.\qquad(5.2)$$

Hence  $L(M_\phi, s)\equiv 0$ if and only if  $|K_{-n, \phi}|=|K_{n, \phi}|$ for each $n$, which is equivalent to 
$Q$ and $-Q$ represent the same set of integers. Then both $Q$ and $-Q$ represent some prime $p$ with $(p, 2D)=1$ by Proposition \ref{Web}, and then 
  $[-Q_\phi]=\pm [Q_\phi]$ by Lemma \ref{represent prime}.
By Theorem \ref{main1}, this is equivalent to that $M$ is achiral.

We have proved (1). Below we prove (2).

Suppose both $\tr(\phi_1), \tr(\phi_2)>0$ (otherwise replace $\phi$ by $-\phi$).
Let $\psi_1=\phi_1^{n_1}$ and  $\psi_2=\phi_2^{n_2}$ such that  $\tr {\psi_1}=\tr {\psi_2}$, where $n_i$ a positive integer, $i=1,2$.
Then from the definition, it is easy to see $$L(M_{\psi_i}, s)=n_iL(M_{\phi_i},s),  i=1, 2. \qquad (5.3)$$
Following (5.2) and  $L(M_{\phi_1}, s)=L(M_{\phi_2}, s)$ we have 
$$L(M_{\psi_1}, s)=\frac {n_2}{n_1}L(M_{\psi_2}, s)=\sum_{n=1}^{\infty}\frac {-|K_{-n}|+|K_{n}|}{|n|^s} \qquad (5.4)$$

By Proposition \ref{Web}, $Q_{\psi_1}$ represents a prime $p$ with $(p, 2D)=1$. Then $-p$ is not represented by $Q_{\psi_1}$ otherwise $[Q_{\psi_1}]=\pm[-Q_{\psi_1}]$  by Lemma \ref{represent prime},
  and therefore $M_{\psi_1}$ is achiral
by Theorem \ref{main1}. 
 So we have $K_p\ne 0$ and $K_{-p}=0$. By (5.4)  $Q_{\psi_2}$ also represents $p$.
Then $[Q_{\psi_2}]=\pm [Q_{\psi_1}]$ again by Lemma \ref{represent prime}. 
Since $\tr {\psi_1}=\tr {\psi_2}$,
 $\psi_1$ is $SL_2(\BZ)$-conjugated to either $\psi_2$ or $w\psi_2^{-1}w$ by Lemma \ref{Q+tr}.
Therefore $L(\psi_1,s)=L(\psi_2,s)$ by Lemma \ref{II}.
That is to say $n_1=n_2$, and therefore $\phi_1$ is $SL_2(\BZ)$-conjugated to either $\phi_2$ or $w\phi_2^{-1}w$.
 Since $M_{\phi_2}$ and $M_{w\phi_2^{-1}w}$ are homeomorphic
as oriented torus bundles, in either case, $M_{\phi_1}$ and  $M_{\phi_2}$  are homeomorphic
as oriented torus bundles.

When  requiring $tr(\phi_i)>0$, we replace $\phi_i$ by $-\phi_i$, so in  general we have $M_{\phi_1}$ is homeomorphic to either $M_{\phi_2}$ or $M_{-\phi_2}$ as oriented torus bundles.
\end{proof}
	
	\begin{prop}\label{entire} 	For an oriented tours bundle $M=M_\phi$ of discriminant $D$, 
	$L(M, s)$ has an entire continuation to $\BC$ and satisfies a functional equation:
\[\Lambda(M,s):=\Gamma_\BR(s+1)^2D^{s/2} L(M,s)=\Lambda(M,1-s).\]	
\end{prop}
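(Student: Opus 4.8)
The plan is to realize $L(M,s)$ as an indefinite binary theta integral and to read off both the continuation and the functional equation from the inversion symmetry of the underlying theta function. First I would diagonalize: since $|\tr\phi|>2$, the matrix $\phi$ has real eigenvalues $\lambda^{\pm1}$ with $\lambda>1$, and the two real linear forms $\ell_1,\ell_2$ cutting out the eigendirections can be normalized so that $\phi$ multiplies $\ell_1$ by $\lambda$ and $\ell_2$ by $\lambda^{-1}$ and so that $Q_\phi(x,y)=\ell_1(x,y)\ell_2(x,y)/\sqrt D$. In these coordinates the cyclic group $\phi^{\BZ}$ acts on each open quadrant $\{\ell_1\ell_2\neq0\}$ by the hyperbolic scaling $u\mapsto u+2\log\lambda$, where $u=\log|\ell_1/\ell_2|$, so a fundamental domain for $\phi^{\BZ}$ is the wedge $\mathcal{F}=\{0\le u<2\log\lambda\}$, and the series defining $L(M,s)$ becomes a sum over $\mathcal{F}\cap(\BZ^2\setminus 0)$ weighted by $\sign(\ell_1\ell_2)$.

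Next I would pass to a theta integral. For $\Re(s)>1$ I rewrite $\sign Q_\phi(x,y)\,|Q_\phi(x,y)|^{-s}$ through a product of two Gamma integrals, one for each linear form, using that $\ell\,|\ell|^{-(s+1)}$ is the Mellin transform of the odd Gaussian $\ell\,e^{-\pi\ell^2 t}$; this is exactly what produces a factor $\Gamma_\BR(s+1)$ at each archimedean place and the power $D^{s/2}$ coming from $Q_\phi=\ell_1\ell_2/\sqrt D$. Summing over the $\phi$-orbits in $\BZ^2\setminus 0$ and folding the two scaling parameters into a single scaling $t>0$ together with the wedge $\mathcal{F}$ assembles the series into an integral $\int \Theta(\cdot)\,t^{s}\,\tfrac{dt}{t}$, where $\Theta$ is the theta function
$$\Theta(\tau)=\sum_{(m,n)\in\BZ^2}(\ell_1\ell_2)(m,n)\,\exp\!\Big(-\pi\big(\ell_1(m,n)^2\,\tau+\ell_2(m,n)^2\,\tau^{-1}\big)\Big),\qquad \tau>0.$$
Because the odd--odd harmonic factor $\ell_1\ell_2$ annihilates the $(0,0)$ term, there is no constant term, and this is precisely why the continuation will be \emph{entire} rather than merely meromorphic.

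The functional equation then comes from the transformation law of $\Theta$ under the inversion $\tau\mapsto 1/\tau$. Applying Poisson summation to $\BZ^2$, one sees that its $Q_\phi$-dual is, up to the scaling $1/\sqrt D$, a lattice carrying the adjoint binary form, which has the same discriminant $D$ and the same infinite cyclic automorph group $\Aut^+(Q_\phi)$ but lies in the inverse class $-[Q_\phi]$. The sign $i^2=-1$ coming from the Fourier transform of the odd--odd weight, together with the relation $L(M_{\phi^{-1}},s)=-L(M_\phi,s)$ of Lemma \ref{II}, conspire to return the \emph{same} completed object, so the functional equation is diagonal and self-dual. Splitting the $t$-integral at the inversion-invariant point and applying the transformation law to the tail yields the continuation to all of $\BC$, the normalizing factor $D^{s/2}$, and the symmetry $\Lambda(M,s)=\Lambda(M,1-s)$ with root number $+1$.

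The hard part will be the third step: making the indefinite theta integral rigorous. The sum over an indefinite form converges only after passing to the $\phi^{\BZ}$-quotient and must be regularized over the non-compact wedge $\mathcal{F}$, and one must pin down the theta transformation law with its exact constant, in particular verifying that the adjoint form reproduces the original completed $L$-function with root number $+1$ and not a genuine twist by the inverse class. This is the classical Hecke--Siegel difficulty for indefinite binary zeta functions; once it is handled the functional equation is formal. An alternative that sidesteps the self-duality bookkeeping is to identify $L(M,s)$ with the Hecke $L$-function $L(s,\chi)$ of $K=\BQ(\sqrt D)$ for the sign character $\chi=\sgn\circ\Nm$ on the narrow ideal class attached to $Q_\phi$, and to quote Hecke's functional equation directly; the Gamma factor $\Gamma_\BR(s+1)^2$ is then explained by $\chi$ being odd at both real places, which is simultaneously what forces $L(M,s)$ to be entire.
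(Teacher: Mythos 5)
Your proposal is correct in outline but takes a genuinely different route from the paper. The paper's proof is essentially a two-line reduction: it cites \cite{ADS} for the statement that $L(M,s)$ continues entirely and satisfies $\Lambda(M,s)=-\Lambda(M_{\phi^t},1-s)$, and then observes that $\phi^t$ is $\SL_2(\BZ)$-conjugate to $\phi^{-1}$, so that Lemma \ref{II} gives $L(M_{\phi^t},s)=-L(M,s)$ and the two signs cancel to yield the self-dual equation. What you have written is, in effect, a reconstruction of the Hecke--Siegel theta-integral argument that lies \emph{inside} the cited result: the factorization $Q_\phi=\ell_1\ell_2/\sqrt D$, the odd--odd Gaussian weight producing $\Gamma_\BR(s+1)^2$ and killing the constant term (hence entirety), the wedge fundamental domain for $\phi^{\BZ}$, and the Poisson-summation step sending the lattice to its $Q_\phi$-dual in the inverse class with an extra factor $i^2=-1$ --- which is exactly the sign structure $\Lambda(M,s)=-\Lambda(M_{\phi^{-1}},1-s)$ that the paper imports and then cancels. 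Your approach buys self-containedness and makes the origin of the Gamma factor and of entirety transparent; the paper's buys brevity at the cost of a black box. Two caveats on your version: (a) the step you yourself flag --- rigorously regularizing the indefinite theta integral over the non-compact wedge and pinning down the exact transformation constant --- is precisely the nontrivial analytic content, and as written your argument is a plan rather than a proof at that point; (b) in your alternative route via Hecke $L$-functions of $K=\BQ(\sqrt D)$ with the sign character, note that $D=D_\phi$ need not be a fundamental discriminant (the paper has $D_M=c^2D_{\CM}$), so one must work with characters of the order of discriminant $D$ (ring/ray class sense) rather than of the maximal order; with that adjustment the identification and the quoted functional equation go through.
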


\begin{proof} It follows (see \cite{ADS}) that $L(M, s)$ ha s entire continuation and satisfies the functional equation: 
$$\Lambda(M, s)=-\Lambda(M_{\phi^t}, 1-s).$$ 
Since for any $\phi\in \SL_2(\BZ)$, $\phi^t$ is $\SL_2(\BZ)$ conjugate to $\phi^{-1}$:
	$$\phi^t=\matrixx{0}{1}{-1}{0} \phi^{-1} \matrixx{0}{1}{-1}{0}.$$
 By Lemma \ref{II}, we have  $L(M_{\phi^t}, s)=L(M_{\phi^{-1}}, s)=-L(M, s)$. So $\Lambda(M, s)=\Lambda(M, 1-s)$. 
 \end{proof}


\section{Appendix: Proof of Lemma 3.3}

 {\bf Lemma 3.3}  {\it For  forms of discirminant $D$,
  
 (i) $[Q]=[Q_{0, D}]$ is equivalent to $Q$ represents $1$, $[Q]=[-Q_{0, D}]$ is equivalent to $Q$ represents $-1$;
 
 (ii) $[-Q_{0, D}]=[Q_{0, D}]$ is equivalent to $x^2-Dy^2=-4$ has an integral solution;
 
 (iii) $[-Q]=-[Q]+[-Q_{0, D}]$; 
 
 (iv) $[-Q]=\pm [Q]$ is equivalent to either $[Q_{0, D}]=[-Q_{0, D}]$ or $2[Q]=[-Q_{0, D}]$;
 
(v)  $[Q_1]=-[Q_2]$  if and only if $Q_1(x, y)=Q_2((x, y)B)$, where $B\in \GL_2(\BZ)$ and $\det B=-1$.}

\begin{proof}Dirichlet gave an approach to the composition law of $C(D)$. Let $[\alpha_i, \beta_i, \gamma_i]$, $i=1, 2$, be two primitive forms of discriminant $D$ satisfying $\gcd(\alpha_1, \alpha_2, (\beta_1+\beta_2)/2)=1$. Let $\beta$ be an integer (unique mod $2\alpha_1\alpha_2$) such that 
$$\beta\equiv \beta_1\mod 2\alpha_1, \quad \beta\equiv \beta_2\mod 2\alpha_2, \quad \beta^2\equiv D \mod 4\alpha_1\alpha_2.$$
Then the composition 
$$[\alpha_1, \beta_1, \gamma_1]+[\alpha_2, \beta_2, \gamma_2]=[\alpha_1\alpha_2, \beta, (\beta^2-D)/4\alpha_1\alpha_2].\qquad (2.1)$$
In fact, any two classes have concordant forms, which means  

$$(i) \, \alpha_1\alpha_2\neq 0, \ \ (ii)\,  \beta_1=\beta_2, \ \  (iii) \, \alpha_2|\gamma_1, \alpha_1|\gamma_2\qquad (2.2)$$. 

In this case, $\beta=\beta_i$. Moreover, we may even require that $\gcd(\alpha_1, \alpha_2)=1$ and $\gcd(\alpha_1\alpha_2, u)=1$ for any given $u\neq 0$ (See Chapter 5 Lemma 2.5 in \cite{Fl}).  It is easy to see that 
$$-[\alpha, \beta, \gamma]=[\gamma, \beta, \alpha]=[\alpha, -\beta, \gamma]. \qquad (2.3)$$ 

Recall $Q_{0, D}$ is the principal form $x^2-\frac{D y^2}4$ if $4|D$ and $x^2+xy+\frac{1-D}{4}y^2$ otherwise. 
In the proof of (i) and (ii), we just verify the case $4|D$, and  the verification for $D=4k+1$ is similar.

(i) It is clear that $[Q]=[Q_{0, D}]$ if and only if $Q$ representing $1$ is  equivalently 
that $[Q]=[-Q_{0, D}]$ if and only if  $Q$ representing $-1$.
 We just prove the former:  Clearly $Q_{0, D}$ represents 1. On the other hand, Suppose $Q(x_0, y_0)=1$ for some integers $x_0,y_0$.
 Then $(x_0, y_0)=1$, and $(x_0, y_0)=(1,0)A$ for some $A\in SL_2\BZ$ by Bezout's Theorem. Then for 
 $Q'(x,y)=Q((x,y)A)$, $Q'(1,0)=1$, which implies that $Q'(x,y)=x^2+bxy+cy^2$. If $4|D$, $b$ is even,
 then $Q'(x,y)=(x-\frac b2y)^2-\frac D4 y^2$.


(ii) Suppose $[-Q_{0,D}]=[Q_{0,D}]$. By (i), when $4|D$, $Q_{0,D}(x_0,y_0)=x_0^2-\frac{D}{4}y_0^2=-1$  for some integers $x_0,y_0$, 
so $(2x_0)^2-Dy_0^2=-4$; So $x^2-Dy^2=-4$ has an integral solution. 


Suppose $x^2-Dy^2=-4$ has an integral solution $(x_0,y_0)$. If $4|D$, then $0\equiv-4=x_0^2-Dy_0^2\equiv x_0^2 ({\rm mod\,\,} 4)$. So $2|x_0$,  
and $Q_{0,D}(\frac{x_0}{2},y_0)=\frac{x_0^2}{4}-\frac{D}{4}y_0^2=\frac{x_0^2-Dy_0^2}{4}=-1$.
By (i), $[-Q_{0,D}]=[Q_{0,D}]$.

(iii) For any $[Q]=[\alpha, \beta, \gamma]$. Then
$$[-\alpha, -\beta, -\gamma]+[\alpha, \beta, \gamma]=[-\alpha, -\beta, -\gamma]+[\gamma, -\beta, \alpha]=[-\alpha \gamma, -\beta, -1]=[-Q_{0, D}].\qquad (2.4)$$
The secand, the third, and last equality are  based on (2.3), (2.1) and (i). repectively.

(iv)   If $[-Q]=-[Q]$, then by (iii) we have $-[Q]=-[Q]+[-Q_{0, D}]$, that is  $[Q_{0, D}]=[-Q_{0, D}]$.
If  $[-Q]=[Q]$, then by (iii) we have $[Q]=-[Q]+[-Q_{0, D}]$, that is  $2[Q]=[-Q_{0, D}]$.

(v)  Suppose $Q_1(x, y)=Q_2((x, y)B)$, where $B\in \GL_2(\BZ)$ and $\det B=-1$.
Let $w=\matrixx{}{1}{1}{}$. Since $w^2B=B$ and $wB\in \SL_2(\BZ)$, we may assume that $B=w$.
If  $Q_2=[\alpha, \beta,\gamma]$, then $Q_1=[\gamma, \beta, \alpha]$. 
Then by (2.3) $[Q_1]=-[Q_2]$.

Suppose  $[Q_1]=-[Q_2]$. Let $Q'_1(x, y)=Q_2((x, y)B')$, where $B'\in \GL_2(\BZ)$ and $\det B=-1$. Then we just proved that
$[Q'_1]=-[Q_2]$. So $[Q_1]=[Q'_1]$. The remaining follows from the definition.
\end{proof}

\end{document}